\documentclass[10pt,reqno]{amsart}
\numberwithin{equation}{section}
\usepackage{amsmath,amsthm}
\usepackage{mathptmx}  
\usepackage{amssymb}
\usepackage{multicol}
\DeclareFontFamily{U}{BOONDOX-calo}{\skewchar\font=45 }
\DeclareFontShape{U}{BOONDOX-calo}{m}{n}{
  <-> s*[1.05] BOONDOX-r-calo}{}
\DeclareFontShape{U}{BOONDOX-calo}{b}{n}{
  <-> s*[1.05] BOONDOX-b-calo}{}
\DeclareMathAlphabet{\mathcalboondox}{U}{BOONDOX-calo}{m}{n}
\SetMathAlphabet{\mathcalboondox}{bold}{U}{BOONDOX-calo}{b}{n}
\DeclareMathAlphabet{\mathbcalboondox}{U}{BOONDOX-calo}{b}{n}

\newcommand{\mcb}[1]{{\mathcalboondox #1}}
\usepackage{amssymb}
\usepackage{amsmath}
\usepackage{amsthm}
\usepackage{diagbox}
\usepackage{booktabs}
\usepackage{enumitem}
\usepackage[sans]{dsfont} 
\usepackage{bbm}
\usepackage{changebar}
\usepackage[colorlinks]{hyperref}
\usepackage{a4wide}
\usepackage[usenames,dvipsnames,svgnames,table]{xcolor}
\usepackage{xcolor}

\usepackage{tikz}
\usetikzlibrary{arrows,shapes,automata,petri,positioning,calc}
\tikzset{
    place/.style={
        circle,
        thick,
        draw=black,
        fill=gray!50,
        minimum size=20mm,
    },
        state/.style={
        circle,
        thick,
        draw=blue!75,
        fill=blue!20,
        minimum size=20mm,
    },
}

\tikzset{
    cross/.pic = {
    \draw[rotate = 45] (-0.2,0) -- (0.2,0);
    \draw[rotate = 45] (0,-0.2) -- (0, 0.2);
    }
}

\usepackage{ulem}
\usepackage{setspace}

\hfuzz=15pt


\newtheorem{thm}{Theorem}[section]
\newtheorem{lem}[thm]{Lemma}
\newtheorem{cor}[thm]{Corollary}

\newtheorem{prop}[thm]{Proposition}

\newtheorem{definition}[thm]{Definition}

\newtheorem{rem}[thm]{Remark}



\newcommand\ve{\varepsilon}






\newcommand{\bb}[1]{{\mathbb #1}}

\title[{Hydrodynamic behavior of long-range symmetric exclusion with a slow barrier}
]{Hydrodynamic behavior of  long-range symmetric\\ exclusion with a slow barrier: diffusive regime}
\author{Pedro Cardoso, Patr\'icia   Gon\c calves, Byron Jim\'enez-Oviedo}
\newcommand{\Addresses}{{
		\footnotesize
		Pedro Cardoso, \textsc{\noindent Center for Mathematical Analysis,  Geometry and Dynamical Systems \\
			Instituto Superior T\'ecnico, Universidade de Lisboa\\
			Av. Rovisco Pais, no. 1, 1049-001 Lisboa, Portugal}\par\nopagebreak
		\textit{E-mail address}: \texttt{pedro.gondim@tecnico.ulisboa.pt}
		
		\medskip
		
		Patr\'icia   Gon\c calves, \textsc{\noindent Center for Mathematical Analysis,  Geometry and Dynamical Systems \\
Instituto Superior T\'ecnico, Universidade de Lisboa\\
Av. Rovisco Pais, no. 1, 1049-001 Lisboa, Portugal}\par\nopagebreak
		\textit{E-mail address}: \texttt{pgoncalves@tecnico.ulisboa.pt}
		
		\medskip
		
		Byron Jim\'enez-Oviedo, \textsc{\noindent Escuela de Matem\'atica,  \\
Faculdad de Ciencias Exactas y Naturales, Universidad Nacional de Costa Rica\\
Heredia, Costa Rica}\par\nopagebreak
		\textit{E-mail address}: \texttt{byron.jimenez.oviedo@una.cr}
}}
\begin{document}
\subjclass[2010]{60K35, 35R11, 35S15}
\begin{abstract}
    In this article we analyse the hydrodynamical behavior of the  symmetric exclusion process
with long jumps and in the presence of a slow barrier. The jump rates for fast bonds are  given by a transition probability $p(\cdot)$ which is symmetric and has finite variance, while for slow bonds the jump rates are given $p(\cdot)\alpha n^{-\beta}$ (with $\alpha>0$ and $\beta\geq 0$), and correspond to  jumps from $\mathbb{Z}_{-}^{*}$ to $\mathbb N$. We prove that:  if there is a fast bond from $\mathbb{Z}_{-}^{*}$ and $\mathbb N$, then the hydrodynamic limit is given by the heat equation with no boundary conditions; otherwise, it is given by the previous equation if $0\leq \beta<1$, but for $\beta\geq 1$ boundary conditions appear, namely, we get Robin (linear) boundary conditions if $\beta=1$ and Neumann boundary conditions if $\beta>1$.
\end{abstract}
\maketitle{}
\section{Introduction}
Over the  last years, there has been an intensive research activity around the derivation of  the  hydrodynamic limits of the conserved quantities of an interacting particle system \cite{kipnis1998scaling}. 
 For systems with only one conservation law, the space-time evolution of the conserved quantity is ruled by a   partial differential equation (PDE), namely the \textit{hydrodynamic equation}.  When the system has more than one conservation law, the hydrodynamic limit is given by a system of   equations that can be coupled. 
 When the system has long range interactions, a crossover from a diffusive behavior to a super-diffusive behavior can be expected, and depending on the tail behavior of the transition probability. 
For systems which are superposed with different types of dynamics, one expects that either the nature of the PDE changes or boundary conditions might appear.

Our focus on this article  is to analyse the propagation of the local perturbation of  the dynamical microscopic rules at the level of the macroscopic hydrodynamic equations.  More specifically, we consider as toy model, the exclusion process with long jumps introduced in \cite{jara2009hydrodynamic} but evolving in the one-dimensional lattice $\mathbb Z$.  In this process, particles evolve as 
one-dimensional continuous time random walks, with the exclusion constraint stating that two particles cannot occupy the same site at any given time.  At every  bond $\{x,y\}$, with $x,y\in\mathbb Z$, we attach a Poisson process with parameter $p(y-x)$, where \textcolor{black}{$p:\mathbb Z\to\mathbb R$} is a symmetric transition probability given explicitly by \eqref{eq:trans_prob}.

 In \cite{jara2009hydrodynamic} the hydrodynamic limits for the exclusion process and for the zero-range process with long jumps, both evolving on $\mathbb Z^d$, were analysed.  There it was analysed the case where $p(\cdot)$ is given by \textcolor{black}{$\|z\|^{-(\gamma+d)}$} and for $\gamma$ restricted to the range $\gamma\in(0,2)$ for which a super-difusive behavior is expected. By taking the anomalous time scale $n^\gamma$, the   hydrodynamic equation was given by the fractional heat equation.  In \cite{sunder} it was considered  a general class  of misanthrope interacting particle systems on $\mathbb Z^d$, which  included both the exclusion process and the zero-range process.  In this general class of models the  dynamics  conserved the number  of particles.    The transition probability that was considered is  as above but  asymmetric, i.e. for $z\in\mathbb Z^d$, \textcolor{black}{ $p(z)=\mathbbm{1}_{\{z>0\}}\|z\|^{-(\gamma +d)}$}, where $z>0$ meant that $z=(z_1,\cdots, z_n)$ and $z_i\geq 0$ and $z\neq 0$.   In this case, two types of evolution equations were obtained. For $\gamma\in(0,1)$ the hydrodynamics  was given by a integro-partial differential equation, when the system  was taken on the anomalous time scale $n^\gamma$; and  for $\gamma\geq 1$, the hydrodynamics  was given by the  Burgers  equation,  when the time scale  was the Euler scaling $n$.   This last behavior  was the same as in the finite-range setting.   In the critical case, corresponding to $\gamma=1$, by correcting the time scale by a factor $ \log(n)$ the hydrodynamic behavior   was shown to be the same as when $\gamma>1$,  i.e. the Burgers equation. 

In this article,  we restrict our study to the one-dimensional setting and to the exclusion process with a symmetric transition probability as in \cite{jara2009hydrodynamic}.   Moreover,   we perturb the exclusion dynamics in such a way that whenever a jump occurs from negative to positive sites, the rate is slowed down  with respect to the rates in all the other bonds. The goal is to create a \textit{slow} barrier and to see its macroscopic effect at the nature of the boundary conditions of the PDE.  This is reminiscent of the work in  \cite{FGNAIHP,franco2015phase} where  the same problem was considered but for particles performing nearest-neighbor  jumps and with a slower rate whenever a particle crosses the bond  $\{-1,0\}$.    Here we  combine these two properties: particles can give  jumps that are arbitrarily large and the slow rates are attached to all the  bonds connecting $\mathbb{Z}_{-}^{*}=\{-1,-2,\ldots \}$ to $\mathbb{N}=\{0,1,2,\ldots,\}$.  These rates depend on two parameters  $\alpha >0$ and $\beta \geq 0$. This means that particles jump from $x$ to $y$ with probability $p(y-x)$ but every time a jump occurs from $\mathbb{Z}_{-}^{*}$ to $\mathbb N$ the jump rate becomes  equal to $\alpha n^{-\beta}p(y-x)$, see Figure \ref{fig1}.
 In this article we   analyse the case when $p(\cdot)$ has finite variance and therefore, the  system has a diffusive behavior,  i.e. the hydrodynamic equation is the heat equation.  In a companion article \cite{CGJ2} we analyse the case when $p(\cdot)$ has infinite variance, for which the behavior is super-diffusive.

\begin{figure}[htb!]
	
	\begin{center}
		\begin{tikzpicture}[thick]

		\draw[shift={(-5.01,-0.15)}, color=black] (0pt,0pt) -- (0pt,0pt) node[below]{\dots};
		\draw[shift={(5.01,-0.15)}, color=black] (0pt,0pt) -- (0pt,0pt) node[below]{\dots};

		\draw[-latex] (-5.3,0) -- (5.3,0) ;
		\draw (1.3,-1.5) -- (1.3,2);
		\draw[latex-] (-5.3,0) -- (5.3,0) ;
		\foreach \x in {-4.5,-4,-3.5,...,4.5}
		\pgfmathsetmacro\result{\x*2-3}
		\draw[shift={(\x,0)},color=black] (0pt,0pt) -- (0pt,-2pt) node[below]{\scriptsize \pgfmathprintnumber{\result}};
		
		
		\node[ball color=black, shape=circle, minimum size=0.3cm] (C) at (1.5,0.15) {};
		
		\node[ball color=black, shape=circle, minimum size=0.3cm] (D) at (2.5,0.15) {};
		
		\node[ball color=black, shape=circle, minimum size=0.3cm] (E) at (-4,0.15) {};
		
		\node[draw=none] (S) at (3.5,0.15) {};
		\node[draw=none] (R) at (-0.5,0.15) {};
		\node[draw=none] (L) at (-4,0.15) {};
		\node[draw=none] (M) at (-1,0.15) {};

		\path [<-] (S) edge[bend right =70, color=blue]node[above] {\footnotesize $\dfrac{1}{2}p(2)$}(D);
		\path [->] (C) edge[bend right =70, color=blue]node[above] {\footnotesize $\dfrac{\alpha}{2n^{\beta}} p(-4)$}(R);			
		\path [<-] (M) edge[bend right =70, color=blue]node[above] {\footnotesize $\dfrac{1}{2}p(6)$}(L);

		\end{tikzpicture}
		\caption{Example of the dynamics of the model. The  jumps rates of size $z$ inside $\mathbb{Z}_{-}^{*}$ and $\mathbb{N}$ are equal to $\frac{1}{2}p(z)$, but  between $\mathbb{Z}_{-}^{*}$ and $\mathbb{N}$ they are equal to $\frac{\alpha}{2n^{\beta}}p(z)$.}	
		\label{fig1}
	\end{center}	
\end{figure}
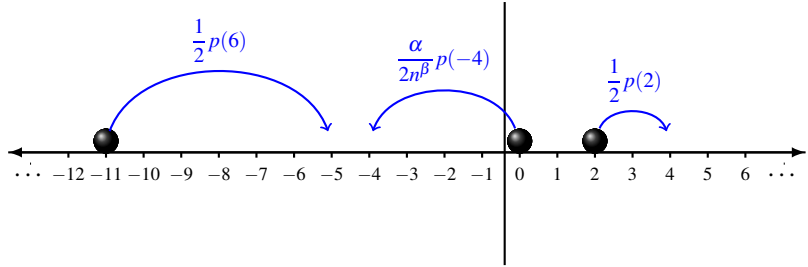

 We observe that if for every $x,y \in \mathbb{Z}$,  there exists at least one path through which a particle can move between $x$ and $y$ (without using slow  bonds), then we have only one hydrodynamic equation, regardless of the values of $\alpha$ and $\beta$. This is a consequence of the fact that the number of slow bonds is not sufficiently big in order to create a different behavior from the case where slow bonds are not present.   On the other hand, if that path does not exist, then we have the same phase transition  that has been studied in \cite{FGNAIHP,franco2015phase}, where the value of $\alpha$ is only relevant in the critical case corresponding to $\beta=1$. We distinguish two cases which are completely equivalent in terms of the hydrodynamic limit: the finite-range case, which is a model very similar to the one in \cite{franco2015phase}, and the long-range case, where the transition probability is \textcolor{black}{a slight modification of the one in \cite{jara2009hydrodynamic}} \textcolor{black}{, since now we choose $\gamma>2$ in order to produce a diffusive behavior}.  The results of this article can be summarized in the following way: a) if particles can move between $\mathbb{Z}_{-}^{*}$ to $\mathbb N$ using a path without slow bonds then we obtain the heat equation as hydrodynamic equation; b) if particles  cannot move between $\mathbb{Z}_{-}^{*}$ to $\mathbb N$ without using at least one slow bond, then the hydrodynamic equation is the heat equation with no boundary conditions if $0\leq \beta<1$, with Robin (linear) boundary conditions if $\beta=1$ and Neumann boundary conditions if $\beta>1$.

 Other similar models have also been recently studied, as  the symmetric long range exclusion in contact with slow/fast reservoirs, which has been explored in a series of articles \cite{bernardinARMA,bernardin2017slow,bernardin2021hydrodynamic,goncalvesscotta} and is a extension of the nearest-neighbor case studied in \cite{baldasso}.  In \cite{bernardin2017slow} it was analysed the exclusion process with $p(\cdot)$   as in \eqref{eq:trans_prob} and with finite variance and the system is put in contact with slow/fast reservoirs.  In  this case, the hydrodynamic equation is the heat equation with Dirichlet, Robin or Neumann boundary  conditions; or a reaction-diffusion  equation; or simply a reaction equation, both with Dirichlet boundary conditions, depending on the strength of the reservoirs' dynamics.   
In the other articles, the case where $p(\cdot)$ has infinite variance has been studied, leading to a fractional PDE given in terms of a regional fractional Laplacian.

In \cite{tertumariana} the analogue of our model in case of nearest-neighbour jumps was  analysed in the $d$-dimensional torus and the resulting hydrodynamic equation was the heat equation with Neumann, Robin or no boundary conditions. We believe that our results could be easily extended to the $d$-dimensional setting  by combining our results with the results of \cite{tertumariana}, but we leave this to a future work.

 We would also like to comment on the main difficulties that we  have encountered along the proofs.  Due to the presence of slow bonds, our natural space of test functions  presents discontinuities both in the Robin and Neumann regimes. Since we work with a long jumps' model which is evolving in the infinite volume, one has to carefully identify the terms that will contribute to the limiting equations. This can be done by properly truncating some series that appear from the action of the  generator, and analysing their tails, and then closing carefully the remaining series in terms of the density of particles. However, the  replacement lemmas that we need to derive in order to close the equations and recognise the corresponding boundary conditions are not straightforward since we consider  all the range $\beta\geq 0$. Moreover, we also  have had to prove the uniqueness of the weak solutions that we derived. To that end,  energy estimates are needed in order to show two properties of the profiles: that they live in a  Sobolev space and  that their translations with respect to constant profiles,  live in the space $L^2(\mathbb R)$. This is quite different from the finite volume case where usually the latter condition  is immediate. Ultimately, in the proof of uniqueness, it  has been challenging to obtain density arguments to approximate the test functions which present discontinuities, since we are working with functions defined on the full line.

\textbf{Outline of the article:} In Section \ref{sec:model_hydro} we define the model, the notions of weak solution to the PDEs that we  have obtained and the hydrodynamic limit. In Section \ref{sectight} we prove the tightness of the sequence of measures induced by the empirical measure and  in Section \ref{seccharac}  we characterize the limiting point. In Section \ref{secenerest} we prove the energy estimates and in Section \ref{secheurwithout} we prove several estimates that are needed to characterize uniquely the limiting point. 
\section{The model and hydrodynamics}\label{sec:model_hydro}
\subsection{The exclusion process with slow bonds in $\mathbb{Z}$}
\label{sec:model}

We begin by establishing the notation for some sets $\mathbb{Z}:=\{\ldots, -1, 0, 1, \ldots \}$, $\mathbb{Z}_{-}^{*}:=\{-1,-2,\ldots \}$ and $\mathbb{N}:=\{0,1,2,\ldots,\}$. Moreover, we denote $\mathbb{R}_{-}^{*}:= (- \infty, 0)$, $\mathbb{R}_{+}^{*}:= (0, \infty)$ , $\mathbb{R}_{+}:=[0, \infty)$ and $\mathbb{R}^{*} = \mathbb{R}_{-}^{*} \cup \mathbb{R}_{+}^{*}=\mathbb{R}-\{0\}$. 

Now we will describe the dynamics of our process. The transitions occur at  all the bonds $\{x,y \in \mathbb{Z}: x \neq y\}$. Here, we identify the bonds $\{x,y\}$ and $\{y, x\}$ (for instance $\{1,2\}=\{2,1\}$). We will denote the set of bonds by $\mcb B$. The elements of the lattice are called \textit{sites} and are denoted by Latin letters such as $x,y,z$. 

The state space of our Markov process is $\Omega:=\{0,1\}^{\mathbb{Z}}$. The elements of $\Omega$ are called configurations and are denoted by Greek letters such as $\eta, \xi$. Given  a configuration $\eta$ and a site $x$, we denote the number of particles at $x$ by $\eta(x)$.
Given a bond $\{x,y\}$, a particle can only move between $x$ and $y$ if $\eta(x) \neq \eta(y)$ and in this case, $\eta(x)$ and $\eta(y)$ exchange their values and  produce a new configuration $\eta^{x,y} \in \Omega$, which can be defined as:   
\begin{equation*}
\eta^{x,y}(z) = \eta(y)\mathbbm{1}_{z=x}+\eta(x)\mathbbm{1}_{z=y}+ \eta(z)\mathbbm{1}_{z\neq x,x}
\end{equation*}
A particle will move across  a bound  $\{x,y\}$ with probability $p(x-y)$, where $p: \mathbb{Z} \rightarrow \mathbb{R}$ is a transition probability  with four properties: it allows jumps of size $1$ ($p(1)=p(-1) >0$), forbids jumps of size $0$ ($p(0)=0$),  it is symmetric ($p(z) = p(-z), \forall z \in \mathbb{Z} $) and it has finite variance ($\sum_z z^2 p(z) < \infty$). We observe that we impose that jumps of size $1$ are possible, just for a matter of taste, since in this case the results of \cite{franco2015phase} are a consequence of our general results. All the following results hold for any transition probability distribution satisfying these four assumptions. Therefore, the reader can either assume we deal with a finite range model or a long-range model. In the finite-range model, we assume that there exists a range $k \in \mathbb{N}$ and $p_1, p_2, \ldots, p_k \in [0,1]$ such that $\sum_{j=1}^{k} p_j = \frac{1}{2}$. Then $p(j)=p_{|j|}$ if $1 \leq |j| \leq k$ and $p(j)=0$ otherwise.
The long-range model allows jumps of arbitrary size according to the transition probability $p(\cdot)$ given by
\begin{align}\label{eq:trans_prob}
p(z) =
\begin{cases}
0, \; \; \text{if} \; \; z=0, \\
c_{\gamma}|z|^{-\gamma-1}  , \; \; \text{if} \; \; z \neq 0,
\end{cases}
\end{align}
where $ \gamma>2$ (in order to produce a finite variance) and $c_{\gamma}$ is a normalizing constant. These two models are completely equivalent from our viewpoint. We will denote
\begin{align*}
m: = \sum_{z \in \mathbb{N}} z p(z) \quad \textrm{and} \quad \sigma^2: = \sum_z z^2 p(z) < \infty.
\end{align*}
Now we will consider a set of slow bonds 
\begin{equation*}
    \mcb S \subset \mcb S_0:=\big\{\{x,y\} \in \mcb B: x <0, y \geq 0 \big\}.
\end{equation*}

The complement of $\mcb S$ with respect to $\mcb B$ will be denoted by $\mcb F$, the set of  fast bonds. Let $n$ be a positive integer, $\alpha >0, \beta \geq 0$. Given a configuration $\eta$, we will denote the rate of transitions in a bond $\{x,y\}$  by $\xi_{x,y}^n(\eta)$, which is defined by 
\textcolor{black}{
\begin{align*}
\xi_{x,y}^n(\eta) =
\begin{cases}
\alpha n^{-\beta}  [\eta(x)(1-\eta(y))+\eta(y)(1-\eta(x))], \{x,y\} \in \mcb S, \\
 [\eta(x)(1-\eta(y))+\eta(y)(1-\eta(x))], \{x,y\} \in \mcb F.
 \end{cases}
\end{align*} 
}

Observe that bonds in $\mcb S$  coined the name \textit{slow bonds} as a consequence of  the definition of the rates and the fact that $\beta\geq 0$.
We say that a function $f: \Omega \rightarrow \mathbb{R}$ is {local} if there exists a finite $\Lambda \subset \mathbb{Z}$ such that $f$ is determined by $ \{ \eta(x): x \in \Lambda\}$. This means that if $\eta_1, \eta_2 \in \Omega$ are such that $\eta_1 (x) = \eta_2(x), \forall x \in \Lambda$, then $f(\eta_1)=f(\eta_2)$. Our Markov process is described by its infinitesimal generator $\mcb {L}_{n}$, which is defined on local functions $f:\Omega\to\mathbb{R}$ by 
\begin{align*}
\mcb{L}_{n}f(\eta) :=& \frac{1}{2}  \sum_{\{x, y\} \in \mcb B} p(x-y) \xi^{n}_{x,y}(\eta)[f(\eta^{x,y})-f(\eta)].
\end{align*}
For every $a \in (0,1)$, we define the product measure $\nu_{a}$ on $\Omega$, with marginals given by $\nu_{a}\{\eta \in \Omega: \eta(x)=1\} = a, \forall x \in \mathbb{Z}$. Under this measure, the random variables $\{ \eta(x): x \in \mathbb{Z} \}$ are independent and identically distributed with Bernoulli distribution of parameter  $a$. Since $p(\cdot)$ is symmetric, a simple computation shows that the measure $\nu_a$ is reversible with respect to $\mcb{L}_{n}$.

\textcolor{black}{Hereinafter, we denote $\sum_{x,y \in \mathbb{Z}: \{x,y \} \in \mcb S }$ by $\sum_{\{x,y \} \in \mcb S }$; this means that every slow bond $\{x_1, x_2\}$ is counted \textit{twice} in $\sum_{\{x,y \} \in \mcb S }$. We define $\sum_{\{x,z \} \in \mcb F }$ and $\sum_{\{x,z \} \in \mcb S_0 }$ in a similar way.} We will say that we have a slow barrier blocking the movement between $\mathbb{Z}_{-}^{*}$ and $\mathbb{N}$ if $\sigma_{\mcb S}^2=\sigma^2$, where
\begin{equation} \label{defsigmas}
\sigma_{\mcb S}^2 := \sum_{ \{x,y  \} \in \mcb S} |y-x| p(y-x) \leq \sum_{ \{x,y  \} \in \mcb {S}_0} |y-x| p(y-x) = \sigma^2.
\end{equation}

\subsection{Notation}

Now we will present some notation for functions which depend only on the space variable. For an interval $I$ in $\mathbb{R}$ and a number \textcolor{black}{$r \geq 1 \in  \mathbb{N}$}, we denote by $C^{r} \left(  I \right)$ the set of functions defined on $I$ that are $r$ times differentiable. Moreover, \textcolor{black}{$C^0(I)$ is the space of continuous functions $G: I \rightarrow \mathbb{R}$ and} $C^{\infty} (I):= \cap_{r=1}^{\infty} C^{r}(I)$. We also consider the set  $C_{c}^{r} ( \mathbb{R} )$ of functions $G \in C^{r}\left( \mathbb{R} \right)$ such that $G$ has a compact support that may include $0$.

Hereafter we fix $T>0$ and a finite time horizon $[0,T]$. Moreover, we denote $\eta_{t}^{n}(x):= \eta_{t n^2}(x)$, so that $\eta_t^n$ has  infinitesimal generator $n^2 \mcb{L}_{n}$. We observe that given an initial configuration $\eta_0^n \in \Omega$, the evolution of the Markov process $\{\eta_{t}^{n}; t \geq 0\}$ is a trajectory in $\Omega$ (i.e., $\eta_{t}^{n} \in \Omega, \forall t \in [0,T])$.  We define $\mcb D([0,T],\Omega)$ as the space of c\`adl\`ag trajectories (right-continuous and with left limits everywhere) $f:[0,T] \rightarrow \Omega$ with the Skorohod topology. In particular, $(\eta_{t}^{n})_{0 \leq t \leq T} \in \mcb D([0,T],\Omega)$.

Let $\mcb {M}^+(\mathbb{R})$ be the space of non-negative, Radon measures on $\mathbb{R}$, equipped with the weak topology. For   $\eta \in \Omega$, we define the empirical measure $\pi^{n}(\eta,du)$ by 
\begin{equation*}\label{MedEmp}
\pi^{n}(\eta, du):=\dfrac{1}{n}\sum _{x }\eta(x)\delta_{\frac{x}{n}}\left( du\right) \in \mcb{M}^+(\mathbb{R}),
 \end{equation*}
where $\delta_{b}$ is a Dirac measure on $b \in \mathbb{R}$. For  $G: \mathbb{R} \rightarrow \mathbb{R}$,  $\langle \pi^n, G \rangle$ denotes the integral of $G$ with respect to $\pi^n(\eta,du)$.

Let $\mcb g: \mathbb{R} \rightarrow [0,1]$ be a measurable function. We will assume that we have a sequence $(\mu_n)_{n \geq 1}$ of probability measures on $\Omega$ which are {associated to the profile $\mcb g$}. This means that for every function $G \in C_c^0(\mathbb{R})$ and for every $\delta >0$, it holds
\begin{equation*}
\lim_{n \rightarrow \infty} \mu_n \left( \eta \in \Omega: \Big| \langle \pi^n, G \rangle - \int_{\mathbb{R}} G(u) \mcb g(u) du \Big| > \delta \right) =0. 
\end{equation*}
For every $n \geq 1$, let $\mathbb{P} _{\mu_{n}}$ be the probability measure on $\mcb D([0,T],\Omega)$ induced by the Markov process $\{\eta_{t}^{n};{t\geq 0}\}$ and by the initial configuration $\eta_0^n$ with distribution $\mu_{n}$. We denote the expectation with respect to $\mathbb{P}_{\mu_{n}}$ by $\mathbb{E}_{\mu_{n}}$. We also define $\pi^{n}_{t}(\eta, dq):=\pi^{n}(\eta^{n}_{t}, dq) $. We observe that $(\pi_{t}^{n})_{0 \leq t \leq T}$ is a trajectory in $\mcb M^+(\mathbb{R})$ and a Markov process. We denote by  $\mcb D([0,T], \mcb{M}^+(\mathbb{R}))$ the space of c\`adl\`ag  trajectories $f:[0,T] \rightarrow \mcb{M}^+(\mathbb{R})$ with the Skorohod topology. In particular, $(\pi_{t}^{n})_{0 \leq t \leq T} \in \mcb D([0,T],\mcb M^+(\mathbb{R}))$. Finally, we define $(\mathbb{Q}_{n})_{n \geq 1}$ as the sequence of probability measures on $\mcb D([0,T],\mcb M^+(\mathbb{R}))$ induced by the Markov process $(\pi_{t}^{n})_{0 \leq t \leq T}$ and by the initial configuration $\eta_0^n$ with distribution $\mu_{n}$.

When we have a slow barrier effect ($\sigma_S^2 = \sigma^2$) separating $\mathbb{Z}_{-}^{*}$ and $\mathbb{N}$ at the microscopic level, we expect there will be some conditions in which we can see a macroscopic blockage of mass between $\mathbb{R}_{-}^{*}$ and $\mathbb{R}_{+}$. Because of this, in some cases it will be convenient to deal with functions which \textit{may be} discontinuous at the origin and have smooth restrictions in $\mathbb{R}_{-}$ and $\mathbb{R}_{+}$. We say that $G \in C_c^{\infty}(\mathbb{R}^{*})$ if there exist $G_{-},G_{+} \in C_c^{\infty} (\mathbb{R})$ such that $G(u)=G_{-}(u)\mathbbm{1}_{u <0}+G_{+}(u)\mathbbm{1}_{ u \geq 0}$.

\begin{rem} \label{Gcont}
Observe that $C_c^{\infty}(\mathbb{R}) \subset C_c^{\infty}(\mathbb{R}^{*})$ since  for any $G_{0} \in   C_c^{\infty}(\mathbb{R}) $, one can choose $G_{-}=G_{+}=G_0$.
\end{rem}

Regardless of the measure space $X$, we will always denote the Lebesgue measure in $X$ by $\mu$. In this way, $L^2( X):=L^2 (X, d \mu)$ is the space of functions $f: X \rightarrow \mathbb{R}$ such that $\int_X |f|^2 d \mu < \infty$ and its norm is denoted by $\| \cdot \|_{2,X}$. Also, $L^{\infty}( X):=L^{\infty} (X, d \mu)$ is the space of functions $f: X \rightarrow \mathbb{R}$ with finite essential supremum and its norm is denoted by $\| \cdot \|_{\infty}$. 

Following Section 8.2 of \cite{brezis2010functional}, we will define some Sobolev spaces. Given an open interval $I$, the Sobolev space $ \mcb{H}^1(I)$ is the set of functions $f \in L^2(I)$ such that there exists $g \in L^2(I)$
\begin{align*}
\int_{I} f(u) \tfrac{d \phi}{du} (u) du = - \int_{I} g(u) \phi(u) du, \forall \phi \in C_c^{\infty}(I). 
\end{align*}
Above, $g$ will be denoted by $\tfrac{d f}{d u}$ and it is the \textit{weak derivative} of $f$. 
The next result is a consequence from Proposition 8.1 of \cite{brezis2010functional}.
\begin{prop}
The space $\big(\mcb{H}^1(I),\|  \cdot \|_{\mcb{H}^1(I)} \big) $ is a separable Hilbert space, where  $\| \cdot \|_{\mcb{H}^1(I)}$ is defined by
\begin{align*}
\| f \|^2_{\mcb{H}^1(I)} := \| f \|_{2,I}^2 + \Big \| \frac{d f}{d u}  \Big \|^2_{2,I} .
\end{align*} 
\end{prop}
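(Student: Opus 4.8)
The plan is to verify in turn the three properties that make up the statement: that $\|\cdot\|_{\mcb{H}^1(I)}$ is induced by an inner product, that the resulting space is complete, and that it is separable. For the inner product I would set
$\langle f, h\rangle_{\mcb{H}^1(I)} := \int_I f(u)\,h(u)\,du + \int_I \tfrac{df}{du}(u)\,\tfrac{dh}{du}(u)\,du$,
where both integrals are finite because $f,h,\tfrac{df}{du},\tfrac{dh}{du}\in L^2(I)$ by the definition of $\mcb{H}^1(I)$. Bilinearity, symmetry and positive-definiteness are inherited directly from the $L^2(I)$ inner product (if $\|f\|_{\mcb{H}^1(I)}=0$ then in particular $\|f\|_{2,I}=0$, so $f=0$ in $L^2(I)$), and by construction $\langle f,f\rangle_{\mcb{H}^1(I)}=\|f\|^2_{\mcb{H}^1(I)}$; hence the stated norm is exactly the one induced by this inner product.

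The core of the argument is completeness, for which the natural device is the linear map $T\colon \mcb{H}^1(I)\to L^2(I)\times L^2(I)$ given by $Tf=\big(f,\tfrac{df}{du}\big)$. Equipping $L^2(I)\times L^2(I)$ with the norm $\|(u,v)\|^2=\|u\|_{2,I}^2+\|v\|_{2,I}^2$, the map $T$ is by definition an isometry onto its image. Since $L^2(I)\times L^2(I)$ is complete, it then suffices to show that $T(\mcb{H}^1(I))$ is a \emph{closed} subspace. Thus I would take a sequence $(f_n)$ in $\mcb{H}^1(I)$ with $Tf_n\to (f,g)$ in $L^2(I)\times L^2(I)$, i.e. $f_n\to f$ and $\tfrac{df_n}{du}\to g$ in $L^2(I)$, and show that $f\in\mcb{H}^1(I)$ with $\tfrac{df}{du}=g$, so that $(f,g)=Tf$ lies in $T(\mcb{H}^1(I))$.

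The identity to exploit is the defining weak-derivative relation $\int_I f_n\,\tfrac{d\phi}{du}\,du=-\int_I \tfrac{df_n}{du}\,\phi\,du$, valid for every $\phi\in C_c^\infty(I)$. The key observation is that for each fixed $\phi$ both $\phi$ and $\tfrac{d\phi}{du}$ are bounded and have compact support, hence belong to $L^2(I)$ even when $I$ is unbounded (as in the cases $I=\mathbb{R}$, $\mathbb{R}_-^*$ or $\mathbb{R}_+^*$ relevant to this paper); therefore both sides of the identity are continuous with respect to the $L^2(I)$ limits $f_n\to f$ and $\tfrac{df_n}{du}\to g$. Passing to the limit yields $\int_I f\,\tfrac{d\phi}{du}\,du=-\int_I g\,\phi\,du$ for all $\phi\in C_c^\infty(I)$, which is precisely the statement that $f\in L^2(I)$ admits $g\in L^2(I)$ as its weak derivative. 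Hence $f\in\mcb{H}^1(I)$ and $\tfrac{df}{du}=g$, so $T(\mcb{H}^1(I))$ is closed and $\mcb{H}^1(I)$ is complete.

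Separability then comes for free from the same embedding: $L^2(I)$ is separable, so $L^2(I)\times L^2(I)$ is separable, and every subset of a separable metric space is separable; transporting a countable dense subset of $T(\mcb{H}^1(I))$ back through the isometry $T^{-1}$ produces a countable dense subset of $\mcb{H}^1(I)$. The only step demanding genuine care is the closedness of the graph of the weak derivative, that is, the limit passage in the integration-by-parts identity and the accompanying remark that $\phi,\tfrac{d\phi}{du}\in L^2(I)$ regardless of whether $I$ is bounded; everything else is a formal transcription of the Hilbert-space structure of $L^2(I)$ through the isometry $T$.
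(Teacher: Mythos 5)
Your proof is correct and is, in substance, exactly the argument the paper relies on: the paper gives no proof of its own, citing Proposition 8.1 of Brezis, whose proof is precisely your scheme --- the isometric embedding $f \mapsto \big(f, \tfrac{df}{du}\big)$ of $\mcb{H}^1(I)$ into $L^2(I) \times L^2(I)$, closedness of the image obtained by passing to the limit in the weak-derivative identity against $\phi \in C_c^{\infty}(I)$, and separability transported back from the product space. So your approach coincides with the paper's (cited) one, including the care you take that the argument works for unbounded $I$.
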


With an abuse of notation, we will say that $f \in \mcb{H}^{1} \left( \mathbb{R}^* \right)$ if  $f_{-}:=f|_{ \mathbb{R}_{-}^{*}} \in  \mcb{H}^{1} \left( \mathbb{R}_{-}^{*} \right)$ and $f_{+}:= f|_{ \mathbb{R}_{+}^{*}} \in  \mcb{H}^{1} \left( \mathbb{R}_+^{*} \right)$. Then Proposition \ref{repcont} tells us that $f_{-}$ and $f_{+}$ have continuous representatives $\tilde{f}_{-}$ and $\tilde{f}_{+}$ in $(-\infty,0]$ and $[0, \infty)$, respectively. If there exists $a \in \mathbb{R}$ such that $g:=f-a \in \mcb{H}^{1} \left( \mathbb{R}^* \right)$, we denote 
\begin{align*}
f(0^{+}) := \lim_{\varepsilon \rightarrow 0^{+}} \frac{1}{\varepsilon} \int_0^{\varepsilon} f(u) du =a+  \lim_{\varepsilon \rightarrow 0^{+}} \frac{1}{\varepsilon} \int_0^{\varepsilon} \tilde{g}_{+}(u) du =a+  \tilde{g}_{+}(0) ; \\
  f(0^{-}) := \lim_{\varepsilon \rightarrow 0^{+}} \frac{1}{\varepsilon} \int_{-\varepsilon}^{0} f(u) du =a+ \lim_{\varepsilon \rightarrow 0^{+}} \frac{1}{\varepsilon} \int_{-\varepsilon}^{0} \tilde{g}_{-}(u) du =a+ \tilde{g}_{-}(0). 
\end{align*}
Above, $\tilde{g}_{-}$ and $\tilde{g}_{+}$ are the continuous representatives of $g_{-}:=g|_{ \mathbb{R}_{-}^{*}}$ and $g_{+}:= g|_{ \mathbb{R}_{+}^{*}}$ in $(-\infty,0]$ and $[0, \infty)$, respectively.

\begin{rem} \label{Hcont}
 Observe that $\mcb{H}^1(\mathbb{R}) \subset \mcb{H}^{1} \left( \mathbb{R}^* \right)$ since $f|_{ \mathbb{R}_{-}^{*}} \in  \mcb{H}^{1} \left( \mathbb{R}_{-}^{*} \right)$ and $f|_{ \mathbb{R}_{+}^{*}} \in  \mcb{H}^{1} \left( \mathbb{R}_+^{*} \right)$ for every $f \in \mcb{H}^1(\mathbb{R})$.
\end{rem}
In the particular case where $f \in \mcb{H}^1(\mathbb{R})$, by uniqueness of the continuous representative, $\tilde{f}$ coincides with $\tilde{f}_{-}$ and $\tilde{f}_{+}$ on $(-\infty,0]$ and $[0, \infty)$, respectively, and we get
$
f(0^{+}) = f_{+}(0) = f(0) = f_{-}(0) =  f(0^{-}).
$
However, in the more general case $f \in \mcb{H}^{1} \left( \mathbb{R}^* \right)$, since there is not a global representative in $\mathbb{R}$, we cannot assume that $f(0^{+})=f(0^{-})$.

In a very close way to Definition 23.1 of \cite{zeidler1989nonlinear}, we say that $\varrho: [0,T ] \rightarrow L^2(I)$ is in the space $L^2 \big(0, T ; \mcb{H}^1 (I ) \big)$ if $\varrho(t,\cdot) \in \mcb{H}^1 ( I )$ for almost every $t$ on $[0,T]$ and
\begin{align*}
\|\varrho\|^2_{L^2 \big(0, T ; \mcb{H}^1 ( I ) \big)} :=  \int_0^T \|\varrho(t, \cdot) \|^2_{\mcb{H}^1 \left( I \right)} dt < \infty.
\end{align*}
Given $\varrho \in L^2 \big(0, T ; \mcb{H}^1 ( I) \big)$, we say that $\tilde{\varrho}$ is the continuous representative of $\varrho$ if $\tilde{\varrho}(t,\cdot) \in \mcb{H}^1 ( I )$ and $\tilde{\varrho}(t,\cdot) \in C^0( \bar{I} )$ for almost every $t$ on $[0,T]$. From Proposition \ref{repcont}, we have $\varrho = \tilde{\varrho}$ almost everywhere on $[0,T] \times \mathbb{R}$ and $\tilde{\varrho}$ is unique. With an abuse of notation, we will say that $\varrho \in L^2 \left(0, T ; \mcb{H}^1 \left( \mathbb{R}^{*} \right) \right) $ if $\varrho(t,\cdot) \in \mcb{H}^1 (\mathbb{R}^{*} )$ for almost every $t$ on $[0,T]$. From Remark \ref{Hcont}, we have $L^2 \left(0, T ; \mcb{H}^1 \left( \mathbb{R} \right) \right) \subset L^2 \left(0, T ; \mcb{H}^1 \left( \mathbb{R}^{*} \right) \right)$.

Still following \cite{zeidler1989nonlinear}, we will need to define some spaces for the test functions. Given a \textcolor{black}{normed vector space} $(N, \| \cdot \|_{N} )$, we say that $P ( [0,T],  N )$ is the space of all polynomials $G: [0,T] \rightarrow N$, i.e., there exists $k \in \mathbb{N}$ such that $G(t)=a_0 + a_1 t + \ldots + a_k t^k$, with $a_j \in N, \forall j=0,1,\ldots,k$, $\forall t \in [0,T]$. In our context, we will have $(N, \| \cdot \|_{N} )=(C_c^{\infty}(\mathbb{R}),\| \cdot \|_{\mcb{H}^1(\mathbb{R})})$ or $(N, \| \cdot \|_{N} )=(\mcb{H}^1(\mathbb{R}), \| \cdot \|_{\mcb{H}^1(\mathbb{R})})$ and the norm in  $P ( [0,T],  N )$ will be $\| \cdot \|_{L^2 \big(0, T ; \mcb{H}^1 ( \mathbb{R} ) \big)}$.
With an abuse of notation, we will say that $G \in P \big( [0,T],  C_c^{\infty}(\mathbb{R}^{*}) \big)$ if there exists $k \in \mathbb{N}$ such that $G(t)=a_0 + a_1 t + \ldots + a_k t^k$, with $a_j \in C_c^{\infty}(\mathbb{R}^{*}), \forall j=0,1,\ldots,k$, $\forall t \in [0,T]$. 
 It is easy to see that for every $G \in P \big( [0,T],  C_c^{\infty}(\mathbb{R}^{*}) \big)$, there exist $G_{-},G_{+} \in P \big( [0,T],  C_c^{\infty}(\mathbb{R}) \big)$ such that
\begin{align*}
G (t,u) =\mathbbm{1}_{u\in(-\infty,0)}G_{-} (t,u)+\mathbbm{1}_{u\in [0, \infty)} G_{+} (t,u).
\end{align*}
With the same reasoning as in Remark \ref{Gcont}, we conclude that $ P \big( [0,T],  C_c^{\infty}(\mathbb{R}) \big) \subset P \big( [0,T],  C_c^{\infty}(\mathbb{R}^*) \big)$.
Given $G \in  P \big( [0,T],  C_c^{\infty}(\mathbb{R} ^{*}) \big)$, there exists $b>0: G(s,u)=0$ when $|u|  \geq b$, for every $s \in [0,T]$. We denote
\begin{align}\label{eq:B_g}
b_G:= \min \big \{b \in \mathbb{N}: G(s,u)=0 \quad\textrm{for}\quad (s,u) \in [0,T] \times \big(  ( - \infty, -b ] \cup [b, \infty) \big) \big \}.
\end{align}
Hereinafter we write $f(n) \lesssim g(n)$ if there exists a constant $C$ independent of $n$ such that $f(n) \leq C g(n)$ for every $n \geq 1$.   Before enunciating the hydrodynamic limit for our model, we will present all the hydrodynamic equations. 

\subsection{Hydrodynamic Equations}
Now we define the notions of weak solution of the hydrodynamic equations that we obtain. 
\begin{definition}\label{eq:dif}
Let $\mcb g: \mathbb{R} \rightarrow [0,1]$ be a measurable function. We say that $\varrho :[0,T] \times \mathbb{R} \rightarrow [0,1]$ is a weak solution of the heat equation in $\mathbb{R}$ with initial condition $g$ 
\begin{equation} \label{eqhyddifreal}
\begin{cases}
\partial_t \varrho (t,u) = \frac{\sigma^2}{2} \Delta \varrho (t,u), (t,u) \in [0,T] \times \mathbb{R}, \\
\varrho(0,u) =\mcb g(u), u \in \mathbb{R} 
\end{cases}
\end{equation}
if the following two conditions hold:
\begin{enumerate}
\item
for every $t \in [0,T]$, for every $G \in \mcb S_{\textrm {Dif}}:=P \big( [0,T] , C_c^{\infty}(\mathbb{R}) \big)$, we have  $F_{\textrm{Dif}}(t, \varrho,G, \mcb g)=0$, where
\begin{align*}
F_{\textrm{Dif}}(t, \varrho,G, \mcb g):= & \int_{\mathbb{R}} \varrho(t,u) G(t,u) du - \int_{\mathbb{R}} \mcb g(u) G(0,u) du 
-  \int_0^t \int_{\mathbb{R}} \varrho(s,u) \Big[ \dfrac{\sigma^2}{2} \Delta + \partial_s \Big] G(s,u) du ds;
\end{align*}
\item 
there exists $a \in (0,1)$ such that $\bar{ \varrho} \in L^2 \left(0, T ; \mcb{H}^1 ( \mathbb{R} ) \right)$, where $\bar{ \varrho}:=\varrho-a$.
\end{enumerate}
\end{definition}
\textcolor{black}{
\begin{rem}
We note that in the second condition of last definition we need only that $\bar{ \varrho}(s, \cdot) \in \mcb{H}^1 ( \mathbb{R} )$ for almost every $s \in [0,T]$; we do not require this to be satisfied for a particular value of $s$ (e.g. for $s=0$), hence we do not assume any additional integrability property of the initial profile. 
\end{rem}
}

\begin{definition}
Let $\kappa\geq 0$ and $\mcb g: \mathbb{R} \rightarrow [0,1]$ be a measurable function. We say that $\varrho:[0,T] \times \mathbb{R} \rightarrow [0,1]$ is a weak solution of the heat equation in $\mathbb{R}^{*}$ with Robin boundary conditions and initial condition $\mcb g$ 
\begin{equation} \label{eqhyddifrob}
\begin{cases}
\partial_t \varrho(t,u) = \frac{\sigma^2}{2} \Delta \varrho(t,u), (t,u) \in [0,T] \times \mathbb{R}, \\
\partial_u \varrho(t,0^+) = \partial_u \varrho(t,0^-)= \kappa[ \varrho(t,0^+) - \varrho(t,0^{-})], t \in (0,T], \\
\varrho(0,u) =\mcb  g(u), u \in \mathbb{R} 
\end{cases}
\end{equation}
if the following two conditions hold:
\begin{enumerate}
\item
for every $t \in [0,T]$, for every $G \in \mcb S_{\textrm {Rob}}:=P \big( [0,T] , C_c^{\infty}(\mathbb{R}^{*}) \big)$, we have $F_{\textrm{Rob}}(t, \varrho,G,\mcb g,\kappa)=0$, where 
\begin{align*}
F_{\textrm{Rob}}(t, \varrho,G, g, \kappa):= & \int_{\mathbb{R}} \varrho(t,u) G(t,u) du - \int_{\mathbb{R}} \mcb g(u) G(0,u) du - \int_0^t \int_{\mathbb{R}} \varrho(s,u) \Big[ \dfrac{\sigma^2}{2} \Delta + \partial_s \Big] G(s,u) du ds   \\
+ &  \frac{\sigma^2}{2} \int_0^t [ \partial_u G(s,0^{-})  \varrho(s,0^{-})  - \partial_u G(s,0^{+})  \varrho(s,0^{+}) ] ds \\
+ & \frac{\kappa \sigma^2}{2} \int_0^t  [   \varrho(s,0^{+})  - \varrho(s,0^{-}) ]  [  G(s,0^{+})    -  G(s,0^{-})  ] ds;  
\end{align*}
\item 
there exists $a \in (0,1)$ such that $\bar{ \varrho} \in L^2 \left(0, T ; \mcb{H}^1 ( \mathbb{R}^* ) \right)$, where $\bar{ \varrho}:=\varrho-a$.
\end{enumerate}
\end{definition}

\begin{rem}
If in last definition we take $\kappa=0$, then we denote $F_{\textrm{Neu}}(t, \varrho,G, g)=F_{\textrm{Rob}}(t, \varrho,G, g,0)$ and we say that $\varrho$ is a weak solution to the heat equation with Neumann boundary conditions. 
\end{rem}

The uniqueness of weak solutions of \eqref{eqhyddifreal} and  \eqref{eqhyddifrob}  is proved in Appendix \ref{secuniq}.

\subsection{The main result}

Now we will enunciate the hydrodynamic limit of our model.
\begin{thm} \textbf{(Hydrodynamic Limit)} \label{hydlim}
Let $\mcb g: \mathbb{R} \rightarrow [0,1]$ be a measurable function. Let $(\mu_n)_{n \geq 1}$ be a sequence of probability measures in $\Omega$ associated to the profile $\mcb g$ such that $H( \mu_n | \nu_a) \lesssim  n$, for some $a \in (0,1)$. Then, for any $0 \leq t \leq T$, any $G \in C_c^0(\mathbb{R})$ and any $\delta >0$,
\begin{align*}
\lim_{n \rightarrow \infty} \mathbb{P}_{\mu_{n}} \Big( \eta^n_\cdot \in  D([0,T],\Omega) : \Big| \langle \pi^n_t, G \rangle - \int_{\mathbb{R}} G(u) \varrho(t,u) du \Big| > \delta \Big) =0, 
\end{align*}
where $\varrho$ is the unique weak solution of
\begin{align*}
\begin{cases}
\eqref{eqhyddifreal},  &\quad  \text{if} \; \sigma_{\mcb S}^2 < \sigma^2 \; \text{or}, \; \sigma_{\mcb S}^2 = \sigma^2 \text{and} \; 0 \leq \beta <1; \\
\eqref{eqhyddifrob} \quad \textrm{with} \quad  \kappa= \frac{2m \alpha}{\sigma^2}, & \quad  \text{if}  \; \sigma_{\mcb S}^2 = \sigma^2 \text{and} \;  \beta =1; \\
\eqref{eqhyddifrob} \quad \textrm{with} \quad  \kappa= 0, &  \quad \text{if}  \; \sigma_{\mcb S}^2 = \sigma^2 \text{and} \;  \beta  >1.
\end{cases}
\end{align*}
\end{thm}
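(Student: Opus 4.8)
The plan is to follow the classical relative-compactness/entropy scheme in three stages: prove tightness of the sequence $(\mathbb{Q}_n)_{n\ge 1}$, show that every limit point is concentrated on trajectories of the form $\pi_t(du)=\varrho(t,u)\,du$ with $\varrho(t,\cdot)\in[0,1]$ a weak solution of the relevant equation among \eqref{eqhyddifreal}--\eqref{eqhyddifrob}, and then invoke the uniqueness of such weak solutions (Appendix \ref{secuniq}) to conclude that the whole sequence converges to the Dirac mass at the deterministic trajectory $\varrho$. The stated convergence in probability of $\langle \pi^n_t,G\rangle$ for $G\in C_c^0(\mathbb{R})$ then follows from the Portmanteau theorem, after the usual approximation of $G$ by compactly supported smooth functions and the observation that limit points are supported on continuous trajectories, since the jumps of $t\mapsto\langle \pi^n_t,G\rangle$ are of order $1/n$. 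Tightness is carried out in Section \ref{sectight} via Aldous' criterion, controlling the Dynkin martingale and its quadratic variation, and I take it as given.

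The heart of the argument is the characterization of limit points (Section \ref{seccharac}). For a test function $G$ in $\mcb S_{\mathrm{Dif}}$ or $\mcb S_{\mathrm{Rob}}$, Dynkin's formula shows that
\[ M^n_t(G):=\langle \pi^n_t,G_t\rangle-\langle \pi^n_0,G_0\rangle-\int_0^t\big\langle \pi^n_s,(\partial_s+n^2\mcb L_n)G_s\big\rangle\,ds \]
is a martingale whose $L^2$-norm vanishes as $n\to\infty$. The key input is the explicit action of the generator on the empirical measure: a summation by parts using the symmetry of $p(\cdot)$ gives
\[ n^2\mcb L_n\langle \pi^n,G\rangle=\frac1n\sum_{x}\eta(x)\,(\mcb A_nG)(x/n),\qquad (\mcb A_nG)(x/n)=n^2\sum_{y}p(y-x)\,c^n_{x,y}\big[G(y/n)-G(x/n)\big], \]
where $c^n_{x,y}=\alpha n^{-\beta}$ on slow bonds and $c^n_{x,y}=1$ otherwise. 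I would split $\mcb A_n$ into a bulk part and a correction localized near the origin. Away from $0$, the smoothness of $G_{\pm}$, a Taylor expansion, the vanishing of the first moment by symmetry, and the identity $\sum_z z^2p(z)=\sigma^2$ show that the bulk part contributes $\frac{\sigma^2}{2}\langle\varrho,\Delta G\rangle$. The remaining contributions come from three sources near the origin: the mismatch between the regional Laplacian on each half-line and its full counterpart (the ``missing'' cross-jumps of the smooth extensions of $G_{\pm}$), the discontinuity of $G$ at $0$, and the slowing of the cross bonds.

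Collecting the near-origin terms, the cross contribution takes the schematic form
\[ \alpha\,n^{1-\beta}\sum_{x<0\le y}p(y-x)\big[G(y/n)-G(x/n)\big]\big[\eta(x)-\eta(y)\big], \]
and the trichotomy of the theorem emerges through the two series identities $\sum_{x<0\le y}p(y-x)=m$ and $\sum_{x<0\le y}(y-x)p(y-x)=\tfrac{\sigma^2}{2}$, both finite precisely because $\gamma>2$. If there is a fast path between $\mathbb{Z}_-^{*}$ and $\mathbb{N}$ (that is, $\sigma^2_{\mcb S}<\sigma^2$), or if $\sigma^2_{\mcb S}=\sigma^2$ with $0\le\beta<1$, these terms do not survive in the limit --- for $0<\beta<1$ by the bound $|G(y/n)-G(x/n)|\lesssim|y-x|/n$ with continuous test functions together with $\sum_{x<0\le y}(y-x)p(y-x)<\infty$, and for $\beta=0$ because the cross rates remain of the same order as the fast ones so that no macroscopic barrier forms --- so no boundary term appears and \eqref{eqhyddifreal} is recovered; the energy estimate of Section \ref{secenerest} then forces $\bar\varrho\in L^2(0,T;\mcb H^1(\mathbb{R}))$, i.e. continuity across $0$. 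When $\sigma^2_{\mcb S}=\sigma^2$ and $\beta\ge1$ I use the discontinuous test functions of $\mcb S_{\mathrm{Rob}}$: the factor $n^{1-\beta}$ is of order one exactly at $\beta=1$, producing the Robin term $m\alpha\,[\varrho(s,0^+)-\varrho(s,0^-)][G(s,0^+)-G(s,0^-)]=\frac{\kappa\sigma^2}{2}[\cdots]$ with $\kappa=2m\alpha/\sigma^2$, whereas for $\beta>1$ it vanishes and only the Neumann case $\kappa=0$ remains; in both cases the regional-versus-full mismatch yields the flux terms $\frac{\sigma^2}{2}[\partial_uG(s,0^-)\varrho(s,0^-)-\partial_uG(s,0^+)\varrho(s,0^+)]$. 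To convert the microscopic occupation variables near the boundary and the cross currents into the macroscopic values $\varrho(s,0^{\pm})$, I rely on the replacement lemmas of Section \ref{secheurwithout}, proved via entropy and Dirichlet-form estimates using the hypothesis $H(\mu_n\,|\,\nu_a)\lesssim n$.

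The main obstacle I anticipate is precisely these replacement lemmas together with the control of the long-range series in infinite volume, uniformly over the whole range $\beta\ge0$: one must truncate the sums arising from $\mcb A_n$ at a mesoscopic scale, bound their tails using $\gamma>2$, and only then replace the truncated occupation variables by density averages. The delicate regime is $\beta=1$, where the cross term is of order one and is neither negligible nor trivially closable. A secondary difficulty, entering the uniqueness proof of Appendix \ref{secuniq}, is the approximation of the discontinuous test functions of $C_c^{\infty}(\mathbb{R}^{*})$ by admissible functions while working on the unbounded domain $\mathbb{R}$, for which the energy estimates showing that $\bar\varrho$ lies in a Sobolev space and that its translates lie in $L^2(\mathbb{R})$ are essential.
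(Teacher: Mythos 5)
Your proposal is correct and follows essentially the same route as the paper: tightness via the Dynkin martingale, characterization of limit points through the fast/slow splitting of the generator (the paper's Proposition \ref{gendif}), the identities $\sum_{x<0\le y}p(y-x)=m$ and $\sum_{x<0\le y}(y-x)p(y-x)=\tfrac{\sigma^2}{2}$ producing the no-boundary/Robin/Neumann trichotomy, replacement lemmas proved from the entropy hypothesis, energy estimates giving the $\mcb{H}^1$ regularity (and, for $\beta<1$, continuity at the origin), and uniqueness of weak solutions to upgrade subsequential convergence of $(\mathbb{Q}_n)_{n\geq 1}$ to the full statement. The only cosmetic difference is that you use the fast-bond/slow-bond decomposition uniformly, whereas the paper switches to the equivalent rearrangement \eqref{princdif}--\eqref{extradif} when the test functions are continuous.
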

We observe that we have a \textit{static} behavior when $\sigma_{\mcb S}^2 < \sigma^2$, since in this case from \eqref{defsigmas} we have  $\mcb S\subsetneq\mcb S_0$ and therefore,  there exists \textit{at least one fast bond} $\{x_1,x_2\}$ with $x_1 <0$ and $x_2 \geq 0$ through which mass will flow normally between $\mathbb{R}_{-}^{*}$ and $\mathbb{R}_{+}$, regardless of the value of $\beta$. On the other hand, when $\sigma_{\mcb S}^2 = \sigma^2$ we have a \textit{phase transition} depending on the value of $\beta$, analogous to Theorem 4.1 of \cite{franco2015phase}. For $\beta \in [0,1)$, the slow bonds do not produce any macroscopic effect; for $\beta > 1$, there is no transport of mass between $\mathbb{R}_{-}^{*}$ and $\mathbb{R}_{+}$; and finally, in the critical case $\beta=1$, we have the boundary conditions, which depend on the value of $\alpha$.  The proof of the theorem is presented in the following sections. From here on we fix $a\in(0,1)$ such that $H( \mu_n | \nu_a) \lesssim  n$ and this will be useful in Sections \ref{secenerest} and \ref{secheurwithout}. In Section \ref{sectight}, we prove that the sequence $(\mathbb{Q}_n)_{n \geq 1}$ is tight with respect to the Skorohod topology of $\mcb D \big( [0,T], \mcb{M}^+(\mathbb{R}) \big)$ and therefore it has at least a limit point $\mathbb{Q}$. Combining the results  of Section  \ref{seccharac} and Section \ref{secheurwithout}, we prove that $\mathbb{Q}$ is concentrated on trajectories that satisfy the first condition of weak solutions of the corresponding hydrodynamic equations. In Section \ref{secenerest}, we prove that the second condition is also satisfied. The necessary replacement lemmas are proved in Section \ref{secheurwithout} and the uniqueness of the hydrodynamic equations is explained in Appendix \ref{secuniq}. Finally we present some auxiliary results in Appendix \ref{secuseres}.
\section{Tightness} \label{sectight}
In this section, we will prove that the sequence $(\mathbb{Q}_{n})_{ n \geq 1 }$ is tight by using Proposition 4.1.6  of \cite{kipnis1998scaling}. Thus, we need to show that, for every $\varepsilon >0$,
\begin{equation} \label{T1dif}
\displaystyle \lim _{\delta \rightarrow 0^+} \limsup_{n \rightarrow\infty} \sup_{\tau  \in \mathcal{T}_{T},\bar\tau \leq \delta} {\mathbb{P}}_{\mu _{n}}\Big[\eta_{\cdot}^{n}\in \mcb D ( [0,T], \Omega) :\left\vert \langle\pi^{n}_{\tau+ \bar\tau},G\rangle-\langle\pi^{n}_{\tau},G\rangle\right\vert > \ve \Big]  =0, 
\end{equation}
for any function $G$ belonging to $C_c^2(\mathbb{R})$. Here $\mathcal{T}_T$ as the set of stopping times bounded by $T$ and we assume that all the stopping times are bounded by $T$, which means $\tau+\bar{\tau}$ should be read as $\min \{ \tau + \bar{\tau}, T \}$.
 From Dynkin's Formula , see Appendix 1 in \cite{kipnis1998scaling}, we have  $\forall t \geq 0$ that
\begin{equation} \label{defMnt}
\mcb M_{t}^{n}(G) = \langle \pi_{t}^{n},G\rangle - \langle \pi_{0}^{n},G\rangle  - \int_0^t n^2 \mcb {L}_n \langle \pi_{s}^{n},G\rangle ds,
\end{equation}
is a martingale.  We observe that \eqref{T1dif} is a direct consequence of  Markov's inequality together with Proposition \ref{tightcond1dif} and Proposition  \ref{tightcond2dif}. We start with the next result which will be important in what follows and can be derived from algebraic manipulations. 
\begin{prop} \label{gendif}
For any $G$, it holds   
\begin{align}
  \int_{0}^{t} n^2 \mcb L_{n}\langle \pi_{s}^{n},G(s, \cdot) \rangle ds   =&  \int_{0}^{t} \frac{1}{n} \sum _{z } n^2  \mcb {K}_n G \left(s, \tfrac{z}{n} \right)  \eta_{s}^{n}(z) ds \label{princdif}   \\
+&  \int_{0}^{t} \frac{n}{2} \Big( 1 - \frac{\alpha}{n^{\beta}} \Big)  \sum_{\{x,z \} \in \mcb S }  [G(s,\tfrac{x}{n}) - G(s,\tfrac{z}{n})] p(z-x)  [\eta_s^n(x)-\eta_s^n(z)] ds \label{extradif} \\
=&  \int_{0}^{t} \alpha n^{1-\beta}  \sum_{ \{x,z\} \in \mcb S}  [G(s, \tfrac{x}{n}) - G(s, \tfrac{z}{n}) ]  p(x-z)  \eta_s^n(z) ds \label{robterm} \\
+&  \int_{0}^{t}\sum_{ \{x,z\} \in \mcb F} n [G(s, \tfrac{x}{n}) - G(s, \tfrac{z}{n}) ]  p(x-z)  \eta_s^n(z) ds, \label{neuterm}
\end{align}
where  
\begin{equation}\label{op_Kn}
\mcb{K}_n G \left( \tfrac{x}{n} \right): = \sum_{y} \left[ G( \tfrac{y}{n}) -G( \tfrac{x}{n}) \right] p(y-x) = \sum_{z } \left[ G( \tfrac{z+x}{n}) -G( \tfrac{x}{n})\right] p(z) .
\end{equation}
\end{prop}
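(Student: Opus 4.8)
The plan is to evaluate the generator on the linear functional $\eta\mapsto\langle\pi^n(\eta),G\rangle=\frac1n\sum_x\eta(x)G(\tfrac xn)$ at a fixed time $s$; the outer time integral and the $s$-dependence of $G$ play no role and are simply carried along, so I suppress $s$. First I record the effect of a single exchange. Since the functional is linear in $\eta$ and $\eta^{x,y}$ differs from $\eta$ only at the sites $x$ and $y$,
\[
\langle\pi^n(\eta^{x,y}),G\rangle-\langle\pi^n(\eta),G\rangle=\tfrac1n\,(\eta(x)-\eta(y))\big[G(\tfrac yn)-G(\tfrac xn)\big].
\]
Substituting this into $\mcb{L}_n$ and using the elementary identity $[\eta(x)(1-\eta(y))+\eta(y)(1-\eta(x))]\,(\eta(x)-\eta(y))=\eta(x)-\eta(y)$ (check the four cases $\eta(x),\eta(y)\in\{0,1\}$), the occupation-dependent rate $\xi^n_{x,y}$ collapses to the deterministic bond weight $c^n_{x,y}:=1-(1-\alpha n^{-\beta})\mathbbm 1_{\{x,y\}\in\mcb{S}}$, and one obtains
\[
n^2\mcb{L}_n\langle\pi^n,G\rangle=\frac n2\sum_{\{x,y\}\in\mcb{B}}p(x-y)\,c^n_{x,y}\,(\eta(x)-\eta(y))\big[G(\tfrac yn)-G(\tfrac xn)\big].
\]

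All sums here converge absolutely, because $G$ has compact support and finite variance of $p$ entails a finite first moment $\sum_z|z|p(z)<\infty$, so relabelling $x\leftrightarrow y$ is legitimate. I keep the bond-counting convention of the paper, in which each bond is summed over \emph{twice} (as ordered pairs) while the prefactor $\tfrac12$ in $\mcb{L}_n$ compensates this double counting, so that the physical crossing rate is $p(x-y)c^n_{x,y}$. Splitting $\eta(x)-\eta(y)$ into its two parts, relabelling $x\leftrightarrow y$ in the second and using that $p$, $c^n_{x,y}$ and the slow-bond set are all invariant under the swap, the two contributions coincide, the $\tfrac12$ cancels, and renaming the variables so that $\eta$ sits on $z$ gives
\[
n^2\mcb{L}_n\langle\pi^n,G\rangle=n\sum_{x,z}p(x-z)\,c^n_{x,z}\,\eta(z)\big[G(\tfrac xn)-G(\tfrac zn)\big].
\]
From this identity the second equality of the proposition is immediate: splitting the sum according to whether $\{x,z\}\in\mcb{S}$ or $\{x,z\}\in\mcb{F}$ and inserting $c^n_{x,z}=\alpha n^{-\beta}$, respectively $c^n_{x,z}=1$, produces exactly the integrands of \eqref{robterm} and \eqref{neuterm}.

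For the first equality I instead insert $c^n_{x,z}=1-(1-\alpha n^{-\beta})\mathbbm 1_{\{x,z\}\in\mcb{S}}$ into the last display. The term carrying the $1$ equals $n\sum_z\eta(z)\sum_x p(x-z)[G(\tfrac xn)-G(\tfrac zn)]$; recognising the inner sum as $\mcb{K}_n G(\tfrac zn)$ from \eqref{op_Kn} and writing $n=\tfrac1n\cdot n^2$ gives precisely \eqref{princdif}. For the remaining correction $-n(1-\alpha n^{-\beta})\sum_{\{x,z\}\in\mcb{S}}p(x-z)\eta(z)[G(\tfrac xn)-G(\tfrac zn)]$ I antisymmetrize over the slow bonds (a set stable under $x\leftrightarrow z$): averaging the summand with its relabelled copy turns $\eta(z)[G(\tfrac xn)-G(\tfrac zn)]$ into $\tfrac12(\eta(z)-\eta(x))[G(\tfrac xn)-G(\tfrac zn)]$, whence the correction becomes $\frac n2(1-\alpha n^{-\beta})\sum_{\{x,z\}\in\mcb{S}}[G(\tfrac xn)-G(\tfrac zn)]\,p(z-x)\,[\eta(x)-\eta(z)]$, which is \eqref{extradif}. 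Reinstating the time integral $\int_0^t\cdots\,ds$ finishes both equalities.

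The only genuinely delicate points are bookkeeping: one must keep the ordered/unordered bond conventions consistent throughout (the factor $\tfrac12$ in $\mcb{L}_n$ exactly matching the double counting in the symmetrized sums) and justify each rearrangement by the absolute convergence noted above. The single manipulation that deserves care is the antisymmetrization producing \eqref{extradif}, since there an apparently non-antisymmetric summand $\eta(z)[G(\tfrac xn)-G(\tfrac zn)]$ must be shown, after averaging with its $x\leftrightarrow z$ relabelling, to equal the manifestly antisymmetric form $\tfrac12[\eta(x)-\eta(z)][G(\tfrac xn)-G(\tfrac zn)]$ that carries the current across the barrier.
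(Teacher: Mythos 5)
Your proposal is correct and is precisely the ``algebraic manipulations'' that the paper leaves implicit for Proposition \ref{gendif}: the linearity of $\eta\mapsto\langle\pi^n(\eta),G\rangle$, the identity $[\eta(x)(1-\eta(y))+\eta(y)(1-\eta(x))](\eta(x)-\eta(y))=\eta(x)-\eta(y)$, the split of the bond weight $c^n_{x,z}=1-(1-\alpha n^{-\beta})\mathbbm{1}_{\{x,z\}\in\mcb S}$ for the first form, and the symmetrization/antisymmetrization over ordered pairs for the second form all check out, including the signs in \eqref{extradif}. You also correctly pin down the one genuinely delicate convention point, namely that the sum in $\mcb L_n$ must be read over ordered pairs (each bond counted twice, matching the paper's stated convention for $\sum_{\{x,z\}\in\mcb S}$ and $\sum_{\{x,z\}\in\mcb F}$), since otherwise the stated prefactors in \eqref{robterm}--\eqref{neuterm} would be off by a factor of $2$.
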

Observe that above we wrote the integral term in two different ways for convenience, we will  use  \eqref{princdif} and \eqref{extradif} when $G \in \mcb S_{\textit {Dif}}$\,; and \eqref{robterm} and \eqref{neuterm} when $G \in \mcb S_{\textit {Rob}}$. \textcolor{black}{Moreover, every fixed slow bond (resp. fixed fast bond) is counted \textit{twice} in  $\sum_{ \{x,z\} \in \mcb S}$ (resp. $\sum_{ \{x,z\} \in \mcb F}$).}  
\begin{prop} \label{tightcond1dif}
For $G \in C_c^2(\mathbb{R})$, it holds
\begin{align*}
\lim_{\delta \rightarrow 0^+} \limsup_{n \rightarrow \infty} \sup_{\tau \in \mathcal{T}_T, \bar{\tau} \leq \delta} \mathbb{E}_{\mu_n} \left[ \Big| \int_{\tau}^{\tau+ \bar{\tau}} n^2  \mcb L_{n}\langle \pi_{s}^{n},G\rangle ds \Big| \right] = 0.
\end{align*}
\end{prop}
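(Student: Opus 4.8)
The plan is to reduce everything to a single deterministic, pointwise bound on the integrand. Namely, I claim that there is a constant $C=C(G)$, independent of $n$, of the time $s$, and of the configuration, such that $|n^2\mcb L_n\langle\pi_s^n,G\rangle|\leq C$. Granting this, the proposition follows immediately: since $\bar\tau\leq\delta$ and $0\leq\eta\leq1$, we have
\begin{align*}
\mathbb E_{\mu_n}\Big[\Big|\int_\tau^{\tau+\bar\tau}n^2\mcb L_n\langle\pi_s^n,G\rangle\,ds\Big|\Big]\leq\mathbb E_{\mu_n}\Big[\int_\tau^{\tau+\bar\tau}\big|n^2\mcb L_n\langle\pi_s^n,G\rangle\big|\,ds\Big]\leq C\,\bar\tau\leq C\delta,
\end{align*}
and taking first the supremum over $\tau\in\mathcal T_T$ and $\bar\tau\leq\delta$, then $\limsup_{n\to\infty}$, and finally $\delta\to0^+$, gives $0$.

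To obtain the pointwise bound I would use the decomposition of Proposition \ref{gendif}, writing $n^2\mcb L_n\langle\pi_s^n,G\rangle$ as the sum of the bulk term \eqref{princdif} and the slow-bond correction \eqref{extradif} (here $G$ is time-independent, so the $s$-dependence is trivial and may be dropped). For the bulk term it suffices to bound $\tfrac1n\sum_z|n^2\mcb K_nG(\tfrac zn)|$, since $|\eta_s^n(z)|\leq1$. Exploiting the symmetry $p(z)=p(-z)$, I rewrite $n^2\mcb K_nG(\tfrac xn)$ from \eqref{op_Kn} as a symmetric second difference; a second-order Taylor expansion of $G$ together with the finite variance $\sum_z z^2p(z)=\sigma^2<\infty$ then yields the uniform estimate $|n^2\mcb K_nG(\tfrac xn)|\leq\tfrac{\sigma^2}{2}\|G''\|_\infty$. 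Letting $b$ be such that $G$ is supported in $[-b,b]$, I split $\tfrac1n\sum_z$ into $|z|\leq2nb$ and $|z|>2nb$: on the first range the uniform bound gives a contribution $\lesssim\tfrac1n\cdot(4nb+1)\cdot\tfrac{\sigma^2}{2}\|G''\|_\infty\lesssim1$; on the second range $G(\tfrac zn)=0$, so $n^2\mcb K_nG(\tfrac zn)$ only involves $G(\tfrac{w+z}{n})$ with $|w+z|\leq nb$, forcing $|w|>nb$, and the polynomial tail of $p$ (using $\gamma>2$ in the long-range model, this term being simply absent in the finite-range model) bounds the contribution by $\lesssim n^{2-\gamma}\to0$. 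This may also be quoted directly from the auxiliary estimates in Appendix \ref{secuseres}.

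For the correction term \eqref{extradif} the argument is softer: I bound $|\eta_s^n(x)-\eta_s^n(z)|\leq1$ and $|1-\alpha n^{-\beta}|\leq1+\alpha$, and use the Lipschitz estimate $|G(\tfrac xn)-G(\tfrac zn)|\leq\|G'\|_\infty\tfrac{|x-z|}{n}$, which exactly absorbs the prefactor $n$. What remains is the absolutely convergent series $\sum_{\{x,z\}\in\mcb S}|x-z|\,p(x-z)=\sigma_{\mcb S}^2\leq\sigma^2<\infty$ from \eqref{defsigmas}, so \eqref{extradif} is bounded by $\tfrac12(1+\alpha)\|G'\|_\infty\,\sigma^2$. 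Combining the two estimates produces the desired constant $C(G)$. I expect the main obstacle to be the control of the bulk term for the long-range model, i.e.\ showing that the tail of $\mcb K_nG$ outside the support of $G$ does not spoil the uniform bound; this is precisely where the finite variance and the assumption $\gamma>2$ enter. By contrast, the slow-bond term is harmless, because the defining inequality $\sigma_{\mcb S}^2\leq\sigma^2$ already guarantees an absolutely summable series.
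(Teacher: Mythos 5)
Your proof is correct and follows essentially the same route as the paper: decompose $n^2\mcb L_n\langle\pi_s^n,G\rangle$ via Proposition \ref{gendif}, bound the slow-bond term \eqref{extradif} by a first-order Taylor expansion together with $\sigma_{\mcb S}^2\leq\sigma^2$, and reduce the proposition to a configuration- and time-uniform bound $|n^2\mcb L_n\langle\pi_s^n,G\rangle|\leq C(G)$, which makes the integral over $[\tau,\tau+\bar\tau]$ of order $\delta$. The only cosmetic difference is that the paper disposes of the bulk term \eqref{princdif} by citing Proposition \ref{convdisc}, whereas you reprove its content inline (symmetric second difference, second-order Taylor, and the tail estimate outside the support of $G$), which you correctly note could instead be quoted from the appendix.
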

\begin{proof}
From Proposition \ref{gendif}, a Taylor expansion of first order on $G$ and the fact that   $|\eta_{s}^{n}(x)| \leq 1, \forall x \in \mathbb{Z}, \forall s \in [0,T]$, we conclude that
\begin{align*}
 |n^2 \mcb L_{n}\langle \pi_{s}^{n},G\rangle |=  \frac{1}{n} \sum _{ z } \Big|  n^2 \mcb {K}_n G \left(\tfrac{z}{n} \right) \Big|  +\frac{ \sigma^2 (1+ \alpha) \|G'\|_{\infty}}{2}.
\end{align*}
Applying Proposition \ref{convdisc} the proof  ends. 
\end{proof}
\begin{prop} \label{tightcond2dif}
For $G \in \mcb S_{Dif}$ and $\beta \in [0,\infty)$ or $G \in \mcb S_{Rob}$ \textcolor{black}{, $\sigma_{\mcb S}^2 = \sigma^2 $} and $\beta \in [1,\infty)$, it holds
\begin{align*}
\lim_{\delta \rightarrow 0^+} \limsup_{n \rightarrow \infty} \sup_{\tau \in \mathcal{T}_T, \bar{\tau} \leq \delta} \mathbb{E}_{\mu_n} \left[ \left( \mcb M_{\tau}^{n}(G) -  \mcb M_{\tau+\bar{\tau}}^{n}(G) \right)^2 \right] = 0.
\end{align*}
\end{prop}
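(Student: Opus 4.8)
The plan is to reduce the claim to an estimate on the predictable quadratic variation of the Dynkin martingale \eqref{defMnt} and then to bound the associated carr\'e du champ. Since $\big(\mcb M_{t}^{n}(G)\big)^2-\int_0^t\Gamma^n_s\,ds$ is a martingale, where $\Gamma^n_s:=n^2\big[\mcb L_n f^2-2f\,\mcb L_n f\big](\eta^n_s)$ with $f(\eta)=\langle\pi^n(\eta),G(s,\cdot)\rangle$, optional stopping (legitimate because $\tau\le\tau+\bar\tau\le T$ are bounded, so the cross term satisfies $\mathbb E_{\mu_n}[\mcb M_{\tau+\bar\tau}^n(G)\,\mcb M_\tau^n(G)]=\mathbb E_{\mu_n}[(\mcb M_\tau^n(G))^2]$) gives
\begin{align*}
\mathbb{E}_{\mu_n}\Big[\big(\mcb M_{\tau+\bar{\tau}}^{n}(G)-\mcb M_{\tau}^{n}(G)\big)^2\Big]=\mathbb{E}_{\mu_n}\Big[\int_{\tau}^{\tau+\bar{\tau}}\Gamma^n_s\,ds\Big].
\end{align*}
Writing $c_{x,y}:=\alpha n^{-\beta}$ on $\mcb S$ and $c_{x,y}:=1$ on $\mcb F$, so that $\xi^n_{x,y}(\eta)=c_{x,y}[\eta(y)-\eta(x)]^2$, and using $f(\eta^{x,y})-f(\eta)=\tfrac1n\big[G(s,\tfrac{x}{n})-G(s,\tfrac{y}{n})\big][\eta(y)-\eta(x)]$ together with $[\eta(y)-\eta(x)]^4=[\eta(y)-\eta(x)]^2$, a direct computation of the carr\'e du champ shows that the factor $n^2$ from the time scaling cancels the $n^{-2}$ from the normalization of $\pi^n$, giving
\begin{align*}
\Gamma^n_s=\tfrac12\sum_{\{x,y\}\in\mcb B}c_{x,y}\,p(x-y)\,\big[G(s,\tfrac{x}{n})-G(s,\tfrac{y}{n})\big]^2\,[\eta^n_s(y)-\eta^n_s(x)]^2.
\end{align*}

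Next I would bound $\Gamma^n_s$ deterministically, using $[\eta^n_s(y)-\eta^n_s(x)]^2\le1$, and split the sum over $\mcb F$ and $\mcb S$. On fast bonds the Taylor estimate applies: in the $\mcb S_{Dif}$ case $G$ is globally smooth, while in the $\mcb S_{Rob}$ case (where $\sigma_{\mcb S}^2=\sigma^2$ forces every crossing bond with $p(y-x)>0$ to lie in $\mcb S$) every fast bond carrying positive rate stays on one side of the origin, where $G$ restricts to a smooth compactly supported function. In either situation a first-order expansion yields $\big[G(s,\tfrac{x}{n})-G(s,\tfrac{y}{n})\big]^2\le\|\partial_u G\|_\infty^2(y-x)^2/n^2$; since $G$ has compact support, for each jump length $z=y-x$ only $\lesssim n$ sites contribute, and summing against $p(z)$ with the finite variance $\sigma^2=\sum_z z^2p(z)<\infty$ bounds the fast-bond contribution by $C\|\partial_u G\|_\infty^2\sigma^2/n\lesssim 1/n$. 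The point of the compact-support counting is precisely that it keeps this sum controlled by the \emph{second} moment alone, rather than a higher moment, which is what makes it viable in the infinite-volume long-range setting.

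The main obstacle lies in the slow bonds, because in the Robin/Neumann regime $G\in\mcb S_{Rob}$ may be discontinuous at the origin and every slow bond crosses it, so the Taylor estimate is unavailable across the jump. Here I would use the crude bound $\big[G(s,\tfrac{x}{n})-G(s,\tfrac{y}{n})\big]^2\le 4\|G\|_\infty^2$ and absorb the loss into the slow rate $\alpha n^{-\beta}$:
\begin{align*}
\tfrac{\alpha}{2n^\beta}\sum_{\{x,y\}\in\mcb S}p(x-y)\,4\|G\|_\infty^2\le\tfrac{2\alpha\|G\|_\infty^2}{n^\beta}\sum_{\{x,y\}\in\mcb S_0}p(y-x)=\tfrac{4\alpha m\|G\|_\infty^2}{n^\beta},
\end{align*}
where the finite first moment $m=\sum_{z\in\mathbb{N}}z\,p(z)<\infty$ (guaranteed by $\gamma>2$) makes the series summable; this bound is uniform in $s$ and of order $n^{-\beta}$. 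The same crude estimate also covers the slow bonds in the $\mcb S_{Dif}$ case for all $\beta\ge0$.

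Combining the two estimates gives $\Gamma^n_s\le C(1/n+n^{-\beta})$ uniformly in $s$, so the displayed expectation is at most $\delta\,C(1/n+n^{-\beta})$ whenever $\bar\tau\le\delta$. Taking $\limsup_{n\to\infty}$ removes the term $1/n$ and, when $\beta>0$, also $n^{-\beta}$ (in particular for $G\in\mcb S_{Rob}$, where $\beta\ge1$); in the only remaining case, $G\in\mcb S_{Dif}$ with $\beta=0$, the $\beta$-term stays bounded and is eliminated by the final limit $\delta\to0^+$. In every case the triple limit vanishes, which is the claim. I expect the slow-bond step to be the genuinely delicate one: the restriction to $\beta\ge1$ for $\mcb S_{Rob}$ is exactly what is needed to handle discontinuous test functions (for $\beta\in[0,1)$ one works instead with continuous $G\in\mcb S_{Dif}$), and the use of $\|G\|_\infty$ rather than $\|\partial_uG\|_\infty$ there is forced by the jump at the origin.
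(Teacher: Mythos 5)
Your proposal is correct and follows essentially the same route as the paper's proof: reduction to the carr\'e du champ $\Gamma^n_s$ via Dynkin's formula, a first-order Taylor estimate combined with compact support and finite variance for the fast bonds (this is exactly the paper's Proposition~\ref{tight2condaux}, applied to $G$ in the $\mcb S_{Dif}$ case and to $G_{-}$, $G_{+}$ separately in the $\mcb S_{Rob}$ case, where $\sigma_{\mcb S}^2=\sigma^2$ indeed forces all rate-carrying crossing bonds to be slow), and the crude $\|G\|_{\infty}$ bound with finite first moment on the slow bonds. The only cosmetic difference is that in the $\mcb S_{Dif}$ case the paper applies the Taylor bound to the slow bonds as well, obtaining $O(n^{-1})$ uniformly in $\beta\geq 0$, whereas you keep the crude bound there and dispose of the resulting $O(n^{-\beta})$ term (nonvanishing only for $\beta=0$) with the final limit $\delta\to 0^{+}$; both are valid.
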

\begin{rem}
The statement of the previous result also includes functions $G$ which are time dependent and may be discontinuous at the origin. This general result  is not necessary here for the proof of tightness, nevertheless, in Section \ref{seccharac}, we will need that result, so we decided to state it here. 
\end{rem}
\begin{proof}
From Dynkin's formula the expectation in the statement of the theorem is equal to 
\begin{align*}
 \mathbb{E}_{\mu_n} \Big[ \int_{\tau}^{\tau+\bar{\tau}}\Gamma^n_s(G)  ds \Big],
\end{align*} where 
$\Gamma^n_s(G)=n^2 \left(\mcb L_{n}[\langle \pi_{s}^{n},G (s, \cdot) \rangle]^{2}-2\langle \pi_{s}^{n},G(s, \cdot) \rangle \mcb L_{n} \langle \pi_{s}^{n},G (s, \cdot) \rangle\right).$
Simple computations show that for $G\in C_c(\mathbb{R})$,  we have
\begin{align*}
\Gamma^n_s(G)
=&  \frac{1}{2}  \sum_{\{w,z \} \in \mcb F } \big[ G\left( \tfrac{w}{n}\right) - G\left( \tfrac{z}{n}\right) ]{^2}  p(w-z) [\eta(w)-\eta(z)]^2 \\
+&  \frac{\alpha}{2n^{\beta}} \sum_{\{w,z \} \in \mcb S } \big[ G\left( \tfrac{w}{n}\right) - G\left( \tfrac{z}{n}\right) ]^2  p(w-z) [\eta(w)-\eta(z)]^2.
\end{align*}
From the fact that  $ |\eta_s^n(z)|\leq 1, \; \forall z \in \mathbb{Z}$ and that $G \in \mcb S_{{Dif}}$ last display can be bounded from above by
\begin{align*}
 \frac{\alpha +1}{2} \sum_{w,z }     \big[G\left(\tfrac{z}{n}\right)-G\left(\tfrac{w}{n}\right)\big ]^2 p(z-w)
\end{align*}
and the proof ends by applying  Proposition \ref{tight2condaux}. 
Now, if instead we take  $G \in \mcb S_{{ Rob}}$\textcolor{black}{, $\sigma_{\mcb S}^2 = \sigma^2 $} and $\beta \in [1,\infty)$, we have 
\begin{align*}
|\Gamma^n_s(G)| 
 \leq&  \frac{1}{2} \sum_{w=0}^{\infty} \sum_{z=0}^{\infty}  \big[ G_{+} \left(s, \tfrac{w}{n}\right) - G_{+}\left(s,\tfrac{z}{n}\right) ]^2  p(w-z)  
 + \frac{1}{2} \sum_{w=-\infty}^{-1} \sum_{z=-\infty}^{-1}  \big[ G_{-} \left(s, \tfrac{w}{n}\right) - G_{-} \left(s,\tfrac{z}{n}\right) ]^2  p(w-z)  \\
+&  \frac{\alpha}{2n} \sum_{\{w,z \} \in \mcb S } \big[ G\left(s, \tfrac{w}{n}\right) - G\left(s, \tfrac{z}{n}\right) ]^2  p(w-z) \lesssim n^{-1}. 
\end{align*}
where above we applied Proposition \ref{tight2condaux}  for $G_{+}$ and $G_{-}$. Moreover, we note the sum over $\{w,z \} \in \mcb S$ in  last line is bounded by a constant depending on $G$. This ends the proof. 
\end{proof}
\section{Characterization of limit points} \label{seccharac}

 Since there is at most one particle per site, according to \cite{kipnis1998scaling}, $\mathbb{Q}$ is concentrated on trajectories $\pi_t(du)$ which are absolutely continuous with respect to the Lebesgue measure, that is
 \begin{align*}
\mathbb{Q} \Big(\pi_{ \cdot} \in \mcb D([0,T], \mcb{M}^+(\mathbb{R})):\pi_t(du)= \varrho(t,u)du, \forall t \in [0,T]  \Big) = 1.
\end{align*}
In this section, our goal is to prove that $\mathbb{Q}$ is concentrated in trajectories $\varrho$ satisfying the first condition of weak solutions of \eqref{eqhyddifreal} when $\sigma_{\mcb S}^2 < \sigma^2$, or $\sigma_{\mcb S}^2 = \sigma^2$ and $0 \leq \beta <1$; and satisfying the first condition of weak solutions of \eqref{eqhyddifrob} with $\kappa=\frac{2m\alpha}{\sigma^2}$ when {$\sigma_{\mcb S}^2 = \sigma^2$ and} $\beta=1$; and \eqref{eqhyddifrob} with $\kappa=0$ when {$\sigma_{\mcb S}^2 = \sigma^2$ and} $\beta > 1$.

\subsection{Characterization of limit points without a slow barrier}
In this subsection, our goal is to prove that $\mathbb{Q}$ is concentrated in trajectories that satisfy the first condition of weak solutions of \eqref{eqhyddifreal}, when we assume  $\sigma_{\mcb S}^2=\sigma^2$ and $0 \leq \beta  <1$ or $\sigma_{\mcb S}^2 < \sigma^2$.

\begin{prop} \label{caraclimsembarlen}
Under the conditions of Theorem \ref{hydlim}, we have
\begin{align*}
\mathbb{Q} \Big(\pi_{ \cdot} \in \mcb D([0,T], \mcb{M}^+(\mathbb{R})): F_{\textrm{Dif}}(t,\varrho,G,\mcb g) = 0, \forall t \in [0,T], \forall G \in  \mcb S_{\textrm{Dif}} \Big) = 1.
\end{align*}
\end{prop}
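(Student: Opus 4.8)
The plan is to read off $F_{\textrm{Dif}}$ from Dynkin's formula together with the generator decomposition of Proposition~\ref{gendif}, and then to dispatch the resulting terms one by one. Fix $G\in\mcb S_{\textrm{Dif}}$ and recall that
$$\mcb M_t^n(G)=\langle\pi_t^n,G(t,\cdot)\rangle-\langle\pi_0^n,G(0,\cdot)\rangle-\int_0^t\Big[\langle\pi_s^n,\partial_sG(s,\cdot)\rangle+n^2\mcb L_n\langle\pi_s^n,G(s,\cdot)\rangle\Big]\,ds$$
is a martingale. Solving for $\langle\pi_t^n,G(t,\cdot)\rangle$ and substituting into the definition of $F_{\textrm{Dif}}$, the $\langle\pi_s^n,\partial_sG\rangle$ contributions cancel and one obtains that $F_{\textrm{Dif}}(t,\pi^n,G,\mcb g)$ equals
$$\Big[\langle\pi_0^n,G(0,\cdot)\rangle-\int_{\mathbb R}\mcb g\,G(0,\cdot)\,du\Big]+\mcb M_t^n(G)+\int_0^t\Big[n^2\mcb L_n\langle\pi_s^n,G\rangle-\big\langle\pi_s^n,\tfrac{\sigma^2}{2}\Delta G\big\rangle\Big]\,ds.$$
I would show that each of the three summands tends to $0$, uniformly in $t\in[0,T]$, in a suitable probabilistic sense; the Proposition then follows by passing to the limit along the subsequence defining $\mathbb Q$.

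The first bracket vanishes in probability because $(\mu_n)$ is associated to $\mcb g$. For the martingale, the quadratic variation computation in the proof of Proposition~\ref{tightcond2dif} yields $\mathbb E_{\mu_n}[\langle\mcb M^n(G)\rangle_T]\lesssim n^{-1}$, so Doob's maximal inequality gives $\sup_{t\le T}|\mcb M_t^n(G)|\to0$ in $L^2(\mathbb P_{\mu_n})$. For the integral bracket I would insert the decomposition \eqref{princdif}+\eqref{extradif}: the difference of \eqref{princdif} and $\int_0^t\langle\pi_s^n,\frac{\sigma^2}{2}\Delta G\rangle\,ds$ equals $\int_0^t\frac1n\sum_z\big[n^2\mcb K_nG(s,\tfrac zn)-\tfrac{\sigma^2}{2}\Delta G(s,\tfrac zn)\big]\eta_s^n(z)\,ds$, which using $|\eta|\le1$ is bounded by the deterministic error $\int_0^t\frac1n\sum_z\big|n^2\mcb K_nG(s,\tfrac zn)-\tfrac{\sigma^2}{2}\Delta G(s,\tfrac zn)\big|\,ds$; this vanishes by Proposition~\ref{convdisc}, the symmetry and finite variance of $p$ being responsible for the limiting operator $\tfrac{\sigma^2}{2}\Delta$.

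The crux is the remaining term \eqref{extradif}. \emph{A priori} it is only of order one: the mean value theorem gives $|G(s,\tfrac xn)-G(s,\tfrac zn)|\lesssim|x-z|/n$, so the prefactor $\frac n2(1-\alpha n^{-\beta})$ is exactly compensated and \eqref{extradif} reduces, to leading order, to $\tfrac12\int_0^t\partial_uG(s,0)\sum_{\{x,z\}\in\mcb S}(x-z)\,p(z-x)[\eta_s^n(x)-\eta_s^n(z)]\,ds$, a genuine current across the barrier. Proving that its $L^1(\mathbb P_{\mu_n})$ norm vanishes is the main obstacle, and is exactly where the standing hypothesis—$\sigma_{\mcb S}^2<\sigma^2$, or $\sigma_{\mcb S}^2=\sigma^2$ with $0\le\beta<1$—is used; this is the purpose of the replacement lemmas of Section~\ref{secheurwithout}. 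I would argue through the entropy inequality: using $H(\mu_n\,|\,\nu_a)\lesssim n$ and the Feynman--Kac/variational bound, the $L^1$ norm of \eqref{extradif} is dominated by $t\sup_{s,f}\{\langle V_s^n,f\rangle_{\nu_a}-\frac n\gamma D_n(\sqrt f)\}$, with $V_s^n$ the integrand of \eqref{extradif}, $D_n$ the Dirichlet form and the supremum taken over $\nu_a$-densities $f$. The occupation gradients $\eta(x)-\eta(z)$ are then absorbed into $D_n$ by the standard swap (Young) estimate: when a fast bond crosses the origin ($\sigma_{\mcb S}^2<\sigma^2$) one routes the exchange through $O(1)$-rate bonds for any $\beta$, while when every crossing is slow but $\beta<1$ one uses the slow part of $D_n$, where the gain $n^{1-\beta}\to\infty$ renders the resulting Dirichlet cost negligible. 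It is precisely this last step that breaks down for $\beta\ge1$, in agreement with the emergence of boundary conditions in that regime.

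Finally I would close the argument in the standard manner. On the set of continuous trajectories, which carries full $\mathbb Q$-measure, the functional $\pi_\cdot\mapsto\sup_{t\le T}|F_{\textrm{Dif}}(t,\varrho,G,\mcb g)|$ is continuous on $\mcb D([0,T],\mcb M^+(\mathbb R))$ because $G,\Delta G,\partial_sG\in C_c^\infty$. Hence, by the portmanteau theorem along $\mathbb Q_{n_k}\to\mathbb Q$ together with the three convergences above,
$$\mathbb Q\Big(\sup_{t}|F_{\textrm{Dif}}(t,\varrho,G,\mcb g)|>\delta\Big)\le\liminf_k\mathbb P_{\mu_{n_k}}\Big(\sup_{t}|F_{\textrm{Dif}}(t,\pi^{n_k},G,\mcb g)|>\delta\Big)=0$$
for every $\delta>0$ and every fixed $G$. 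Thus $F_{\textrm{Dif}}(t,\varrho,G,\mcb g)=0$ for all $t$, $\mathbb Q$-almost surely, for each fixed $G$; intersecting over a countable subset of $\mcb S_{\textrm{Dif}}$ dense in the $C^2$-topology and invoking the continuity of $G\mapsto F_{\textrm{Dif}}(t,\varrho,G,\mcb g)$ upgrades this to all $G\in\mcb S_{\textrm{Dif}}$ simultaneously, which is the assertion.
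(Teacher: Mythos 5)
Your proposal is correct in outline and shares the paper's skeleton---Dynkin's formula, the decomposition of Proposition~\ref{gendif}, Proposition~\ref{convdisc} for the principal term \eqref{princdif}, and Doob's inequality plus the quadratic-variation bound of Proposition~\ref{tightcond2dif} for the martingale---but it treats the slow-bond current \eqref{extradif} by a genuinely different route in the one delicate regime, namely $\sigma_{\mcb S}^2<\sigma^2$ with $\beta\geq 1$. The paper does \emph{not} show that \eqref{extradif} vanishes microscopically there: via Proposition~\ref{convbound} it compares \eqref{extradif} with the discrete boundary current $\tfrac{\sigma_{\mcb S}^2}{2}\partial_u G(s,0)[\eta_s^{\rightarrow n\varepsilon}(0)-\eta_s^{\leftarrow n\varepsilon}(0)]$, passes this to the limit (which is precisely what forces the approximations of identity $\iota_{\varepsilon}^{0^{\pm}}$ and the delicate Portmanteau step, since the boundary functional is not Skorohod-continuous), and only then kills the limiting term $\tfrac{\sigma_{\mcb S}^2}{2}\partial_u G(s,0)[\varrho(s,0^+)-\varrho(s,0^-)]$ by invoking the macroscopic energy estimate of Proposition~\ref{estenergsembarlenfor}, which gives $\varrho(s,0^+)=\varrho(s,0^-)$ $\mathbb{Q}$-almost surely. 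You instead propose to kill \eqref{extradif} directly in $L^1(\mathbb{P}_{\mu_n})$ by routing each exchange across a slow bond through the fast bridge $\{x_1,x_2\}$ inside the entropy/Feynman--Kac bound; this is the same ``bridge'' idea the paper exploits, but deployed microscopically (the paper uses it only inside the proof of Proposition~\ref{estenergdifreal}). Your route buys a cleaner limit passage---no discontinuous functionals, no appeal to Section~\ref{secenerest} within this proof---while the paper's route recycles an energy estimate it needs anyway for the second condition of the weak solution, plus the already-proved Lemmas~\ref{partreplemma} and~\ref{replemma}.

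Two technical points hidden in your phrase ``one routes the exchange through $O(1)$-rate bonds'' must be written out for the argument to close. First, congestion: the coefficients $c_{x,z}\approx\tfrac n2|G(s,\tfrac xn)-G(s,\tfrac zn)|\,p(z-x)$ sum to order one over $\mcb S$, and all of these exchanges share the same fast path, so the Dirichlet form of each fast bond must be split among its users (a proportional allocation of the constants in \eqref{young}, yielding a total Young cost of order $\tfrac Dn\sum_b N_b^2/p(b)$ with $N_b$ the total coefficient routed through the bond $b$). Second, truncation: the naive per-bond cost involves $\sum_{\{x,z\}\in\mcb S}|x-z|^2p(z-x)\sim\sum_r r^3p(r)$, which diverges when $2<\gamma\leq 3$; one must first discard jumps of size larger than $M$ (an error bounded by $\|\partial_u G\|_{\infty}\sum_{r>M}r^2p(r)$, uniformly in $n$ and $t$) and route only the truncated part, sending $n\to\infty$, then $D\to\infty$, then $M\to\infty$. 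With these two standard repairs your scheme works in both of your cases, and in the case $\sigma_{\mcb S}^2=\sigma^2$, $\beta<1$ it essentially reproduces the paper's replacement lemmas, with the $n^{1-\beta}$ gain on the slow part of the Dirichlet form playing the role it plays in Lemma~\ref{partreplemma}.
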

\begin{proof}
The proof ends as long as we show, for any $\delta >0$ and $G \in \mcb S_{{Dif}}$, that
\begin{align}\label{eq:cond_1}
\mathbb{Q} \Big(\pi_{ \cdot} \in \mcb D([0,T], \mcb{M}^+(\mathbb{R})): \sup_{0 \leq t \leq T} |F_{Dif}(t,\varrho,G,\mcb g)| > \delta  \Big) = 0.
\end{align}
Hereafter, we denote $\bar{\varrho}:=\varrho-a$. First, we write $F_{Dif}(t,\varrho,G,\mcb g)$ as
\begin{align}
  F_{Dif}(t,\varrho,G, \mcb g) 
=&\int_{\mathbb{R}} \varrho(t,u) G(t,u) du - \int_{\mathbb{R}} \mcb g(u) G(0,u) du -   \int_0^t \int_{\mathbb{R}} \varrho(s,u) \Big[ \dfrac{\sigma^2}{2} \Delta + \partial_s \Big] G(s,u) du ds \nonumber \\
-& \mathbbm{1}_{\beta \geq 1} \frac{\sigma_{\mcb S}^2}{2}  \int_0^{t} \partial_u G(s,0)[ \varrho(s,0^{+}) - \varrho(s,0^{-})] ds. \label{fdifind}
\end{align}
Before we go on, we explain why we added the last term in  last display. For  $0 \leq \beta  <1$ that term is equal to zero, but when $\beta\geq 1$,  from Proposition \ref{estenergsembarlenfor}, we know that the measure $\mathbb{Q}$ is concentrated on trajectories $\varrho$ such that $\bar{\varrho} \in L^2 \big(0, T ;  \mcb{H}^1 (\mathbb{R}) \big)$, which means that for almost every $s \in [0,T]$, $\bar{\varrho}(s, \cdot) \in \mcb{H}^1 (\mathbb{R})$ and $\bar{\varrho}(s,0^{+})=\bar{\varrho}(s,0^{-})$ and therefore  that term is again equal to zero. Nevertheless, writing it in that form, allows, for $\beta \geq 1$, to compare it with its discrete analogue, namely the term   $ \int_0^{t }\partial_u G(s,0) [ \eta_s^{\rightarrow n \varepsilon}(0) - \eta_s^{\leftarrow n \varepsilon}(0)]ds$, which in principle is not equal to zero. \textcolor{black}{Above we define for $\ell \geq 1 \in \mathbb{N}$ the empirical averages on a box of size $\ell$ around $0$, given by:
\begin{equation} \label{medemp}
\eta^{\rightarrow \ell}(0):=\frac{1}{\ell} \sum_{y=1}^{\ell} \eta(y)   \; \; \text{and} \; \; \eta^{\leftarrow \ell}(0):=\frac{1}{\ell} \sum_{y=-\ell}^{-1} \eta(y),
\end{equation}
where we interpreted $\varepsilon n$ as $\lfloor \varepsilon n\rfloor$. 
}

Then, by Proposition \ref{convbound} we are able to link those two terms in the regime $\beta \geq 1$ and $\sigma_{\mcb S}^2 < \sigma^2$.

To simplify notation in what follows, we erase $\pi_{\cdot}$ from the sets where we  look at. We observe that, in the case $\beta\geq 1$, due to the boundary terms  $\varrho(s,0^-)$ and $\varrho(s,0^+)$ that appear in last display, we deal with sets which are not open in the Skorohod topology and therefore we are not be able to use directly Portmanteau's Theorem. In order to avoid this problem, for every $\varepsilon>0$ we define two approximations of the identity given by
\textcolor{black}{
$
\iota_{\varepsilon}^{0^+}(u):= \varepsilon^{-1} \mathbbm{1}_{(0, \varepsilon]}(u); \; \; \iota_{\varepsilon}^{0^-}(u):=\varepsilon^{-1} \mathbbm{1}_{[-\varepsilon,0)}(u). 
$
}
Summing and subtracting to $\varrho(s,0+)$ (resp., $\varrho(s,0-)$) the mean $\langle \pi_s, \iota_{\varepsilon}^{0^+} \rangle$ (resp., $\langle \pi_s, \iota_{\varepsilon}^{0^-} \rangle$) we can bound the probability in \eqref{eq:cond_1} by the sum of the following four terms:
\begin{align}
\mathbb Q\Big( \sup_{0 \leq t \leq T} \Big|\int_{\mathbb{R}} \varrho(t,u) G(t,u) du -&\int_{\mathbb{R}} \varrho(0,u) G(0,u) du  
-  \int_0^t \int_{\mathbb{R}} \varrho(s,u) \Big[ \dfrac{\sigma^2}{2} \Delta + \partial_s \Big] G(s,u) du ds \nonumber  \\
- &  \mathbbm{1}_{\beta \geq 1} \frac{\sigma_{\mcb S}^2}{2}  \int_0^t \partial_u G(s,0) [   \langle \pi_s, \iota_{\varepsilon}^{0^+} \rangle - \langle \pi_s, \iota_{\varepsilon}^{0^-} \rangle] ds \Big| > \dfrac{\delta}{4} \Big), \label{f1term1dif}
\end{align}
\begin{equation} \label{f1term2dif}
\mathbb{Q} \Big(  \Big| \int_{\mathbb{R}} \left[ \varrho(0,u) - \mcb g(u) \right] G(0,u) du \Big| > \dfrac{\delta}{4}  \Big),
\end{equation}
\begin{equation} \label{f1term3dif}
\mathbb{Q} \Big(  \mathbbm{1}_{\beta \geq 1} \frac{\sigma_{\mcb S}^2}{2} \sup_{0 \leq t \leq T} \Big| \int_{0}^{t} \partial_u G(s,0) [ \varrho(s,0^{+}) - \langle \pi_s, \iota_{\varepsilon}^{0^+} \rangle ]  ds \Big| > \dfrac{\delta}{4}  \Big),
\end{equation}
and
\begin{equation} \label{f1term4dif}
\mathbb{Q} \Big( \mathbbm{1}_{\beta \geq 1} \frac{\sigma_{\mcb S}^2}{2} \sup_{0 \leq t \leq T} \Big| \int_{0}^{t} \partial_u G(s,0) [ \varrho(s,0^{-}) - \langle \pi_s, \iota_{\varepsilon}^{0^-} \rangle]  ds \Big| > \dfrac{\delta}{4}  \Big).
\end{equation}
Above, we used \eqref{fdifind}. Since $\mathbb{Q}$ is a limit point of $(\mathbb{Q}_n)_{n \geq 1 }$, which is induced by  $(\mu_n)_{n  \geq 1}$ and it is associated to the profile $\mcb g$,the expression in \eqref{f1term2dif} is equal to zero. We observe that the expressions in \eqref{f1term3dif} and \eqref{f1term4dif} go to zero when $\varepsilon \rightarrow 0^+$.
Therefore, it remains only to consider \eqref{f1term1dif}. We still cannot use Portmanteau's Theorem, since the functions $\iota_{\varepsilon}^{0^+}$ and $\iota_{\varepsilon}^{0^-}$ are not continuous. In order to do so, we can approximate each one of these functions by continuous functions in such a way that the error vanishes as $\varepsilon \rightarrow 0^+$. In this way, we will be able to bound \eqref{f1term1dif} from above by
\begin{align}
& \liminf_{n \rightarrow \infty} \mathbb{Q}_n \Big(  \sup_{0 \leq t \leq T} \Big| \int_{\mathbb{R}} \varrho(t,u) G(t,u) du -\int_{\mathbb{R}} \varrho(0,u) G(0,u) du   
-  \int_0^t  \int_{\mathbb{R}} \varrho(s,u)  \dfrac{\sigma^2}{2} \Delta G(s,u) du ds \nonumber \\&- \int_0^t  \int_{\mathbb{R}} \varrho(s,u)  \partial_s G(s,u) du ds
-  \mathbbm{1}_{\beta \geq 1} \frac{\sigma_{\mcb S}^2}{2} \int_0^t \partial_u G(s,0) [  \langle \pi_s, \iota_{\varepsilon}^{0^+} \rangle  - \langle \pi_s, \iota_{\varepsilon}^{0^-} \rangle] ds \Big| > \dfrac{\delta}{16} \Big). \label{f1term1adif}
\end{align}
From the definition of $\mcb M_{t}^{n}(G)$ in \eqref{defMnt}, we can bound \eqref{f1term1adif} from above by the sum of the next two terms
\begin{equation} \label{f1term1bdif}
\liminf_{n \rightarrow \infty} \mathbb{P}_{\mu_n}  \Big(  \sup_{0 \leq t \leq T} |\mcb M_{t}^{n}(G)| > \dfrac{\delta}{32} \Big),
\end{equation}
and
\begin{align}
 \liminf_{n \rightarrow \infty}  \mathbb{P}_{\mu_n} \Big(  \sup_{0 \leq t \leq T}  \Big|  \int_{0}^{t}  n^2  \mcb L_{n}\langle \pi_{s}^{n},G_s \rangle ds &- \int_0^t  \dfrac{\sigma^2}{2}  \langle \pi_s^n, \Delta G_s \rangle ds \nonumber \\
- &  \mathbbm{1}_{\beta \geq 1} \frac{\sigma_{\mcb S}^2}{2} \int_0^t \partial_u G(s,0) [ \eta_s^{\rightarrow n \varepsilon}(0)  - \eta_s^{\leftarrow n \varepsilon}(0)]  ds \Big| > \dfrac{\delta}{32} \Big). \label{f1term1cdif}
\end{align}
From Doob's inequality and Proposition \ref{tightcond2dif} the expression in \eqref{f1term1bdif} is equal to zero. 
From Proposition \ref{gendif}, we can bound \eqref{f1term1cdif} from above by the sum of the next three terms
\begin{equation} \label{f1term1c1dif}
\limsup_{n \rightarrow \infty}  \mathbb{P}_{\mu_n}  \Big( \sup_{0 \leq t \leq T} \Big| \int_0^t \Big\{ \dfrac{1}{n} \sum _{z } n^2 \mcb {K}_n G_s \left(\tfrac{z}{n} \right)  \eta_{s}^{n}(z) - \frac{\sigma^2}{2} \langle \pi_s^n, \Delta G_s \rangle \Big\} ds \Big|> \dfrac{\delta}{96}  \Big),
\end{equation}
\begin{align}
 \limsup_{\varepsilon \rightarrow 0^+} \limsup_{n \rightarrow \infty}  \mathbb{P}_{\mu_n}  \Big( \sup_{0 \leq t \leq T} \Big|& \int_{0}^{t} \Big\{ \frac{n}{2}    \sum_{\{y,z\} \in \mcb S}  [G(s,\tfrac{y}{n}) - G(s,\tfrac{z}{n})] p(z-y)  [\eta_s^n(y)-\eta_s^n(z)] \nonumber \\
- & \mathbbm{1}_{\beta \geq 1} \frac{\sigma_{\mcb S}^2}{2} \partial_u G(s,0)[ \eta_s^{\rightarrow n \varepsilon}(0) - \eta_s^{\leftarrow n \varepsilon}(0)] \Big\} ds\Big|> \dfrac{\delta}{96}  \Big)\label{f1term1c2dif}
\end{align}
and
\begin{equation} \label{f1term1c3dif}
 \limsup_{\varepsilon \rightarrow 0^+} \limsup_{n \rightarrow \infty}  \mathbb{P}_{\mu_n}  \Big( \sup_{0 \leq t \leq T} \Big| \int_{0}^{t} \frac{\alpha n^{1-\beta}}{2}  \sum_{\{y,z\} \in \mcb S}  [G(s,\tfrac{y}{n}) - G(s,\tfrac{z}{n})] p(z-y)  [\eta_s^n(z)-\eta_s^n(y)] ds \Big|> \dfrac{\delta}{96}  \Big).
\end{equation}
From Proposition \ref{convdisc} and Markov's inequality,  \eqref{f1term1c1dif} is equal to zero. From Proposition \ref{convbound} and Markov's inequality,  \eqref{f1term1c2dif}  and \eqref{f1term1c3dif} are both equal to zero.
\end{proof}

\subsection{Characterization of limit points with a slow barrier} \label{subseccarac}

In this section, our goal is to prove that $\mathbb{Q}$ is concentrated in trajectories that satisfy the first condition of weak solutions of \eqref{eqhyddifrob}, when {$\sigma_{\mcb S}^2 = \sigma^2$ and} $\beta \geq 1$.
\begin{prop} \label{caraclimcombarlen}
Under the conditions of Theorem \ref{hydlim}, we have
\begin{align*}
\mathbb{Q} \Big(\pi_{ \cdot} \in \mcb D([0,T], \mcb{M}^+(\mathbb{R})): F_{\textrm{Rob}}(t, \varrho ,G, \mcb  g, \frac{2m\alpha}{\sigma^2} \mathbbm{1}_{\beta = 1}) = 0, \forall t \in [0,T], \forall G \in \mcb S_{\textrm{Rob}}  \Big) = 1.
\end{align*}
\end{prop}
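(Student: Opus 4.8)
The plan is to run the same scheme as in the proof of Proposition~\ref{caraclimsembarlen}, but now exploiting the decomposition \eqref{robterm}--\eqref{neuterm} of the generator, which is adapted to test functions $G\in\mcb S_{\textrm{Rob}}$ that may jump at the origin. It suffices to show, for each $\delta>0$ and each $G\in\mcb S_{\textrm{Rob}}$, that
\[
\mathbb{Q}\Big(\sup_{0\le t\le T}\big|F_{\textrm{Rob}}(t,\varrho,G,\mcb g,\tfrac{2m\alpha}{\sigma^{2}}\mathbbm{1}_{\beta=1})\big|>\delta\Big)=0.
\]
First I would replace, inside $F_{\textrm{Rob}}$, each boundary trace $\varrho(s,0^{\pm})$ by the empirical box average $\langle\pi_s,\iota_{\varepsilon}^{0^{\pm}}\rangle$; this is legitimate because the energy estimate of Section~\ref{secenerest} (the analogue of Proposition~\ref{estenergsembarlenfor} on $\mathbb{R}^{*}$) concentrates $\mathbb{Q}$ on trajectories with $\bar\varrho\in L^{2}(0,T;\mcb H^{1}(\mathbb{R}^{*}))$, so the traces exist and equal $\lim_{\varepsilon\to 0^{+}}\langle\pi_s,\iota_{\varepsilon}^{0^{\pm}}\rangle$, the errors being controlled exactly as in \eqref{f1term3dif}--\eqref{f1term4dif}. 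After smoothing the indicators $\iota_{\varepsilon}^{0^{\pm}}$ into continuous functions, Portmanteau's theorem passes the estimate from $\mathbb{Q}$ to $\liminf_{n}\mathbb{Q}_{n}$; Dynkin's formula \eqref{defMnt} then replaces the functional by $\int_{0}^{t}n^{2}\mcb L_{n}\langle\pi_s^{n},G_s\rangle\,ds$, and the martingale contribution vanishes by Doob's inequality together with Proposition~\ref{tightcond2dif}, which was stated precisely for $G\in\mcb S_{\textrm{Rob}}$ and $\beta\ge1$ for this purpose.

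The heart of the matter is to identify the limit of $\int_{0}^{t}n^{2}\mcb L_{n}\langle\pi_s^{n},G_s\rangle\,ds$ through \eqref{robterm}--\eqref{neuterm}. For the fast part \eqref{neuterm}, the hypothesis $\sigma_{\mcb S}^{2}=\sigma^{2}$ forces $\mcb S=\mcb S_0$, so every fast bond lies entirely in $\mathbb{Z}_{-}^{*}$ or entirely in $\mathbb{N}$ and the sum splits into two half-line sums on which $G$ coincides with the smooth $G_{-}$, $G_{+}$. On each half-line I would write the restricted kernel as the full kernel $\mcb K_n$ minus the crossing bonds, so a second-order Taylor expansion yields the bulk term $\tfrac{\sigma^{2}}{2}\langle\pi_s^{n},\Delta G_s\rangle$, while the subtracted crossing bonds, expanded to first order, produce the Neumann flux: the first-order increment contributes a factor $\tfrac{z-x}{n}$ and the number of crossing pairs of length $w$ equals $w$, so the combinatorial weight is $\sum_{w\ge1}w^{2}p(w)=\tfrac{\sigma^{2}}{2}$, giving the boundary term $\tfrac{\sigma^{2}}{2}[\partial_u G(s,0^{-})\eta_s^{\leftarrow n\varepsilon}(0)-\partial_u G(s,0^{+})\eta_s^{\rightarrow n\varepsilon}(0)]$. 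After the replacement lemmas of Section~\ref{secheurwithout} identify these box averages with $\varrho(s,0^{\pm})$, the bulk term cancels the $\tfrac{\sigma^{2}}{2}\Delta$ piece of $F_{\textrm{Rob}}$ and the Neumann term reproduces the second line of $F_{\textrm{Rob}}$.

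For the slow part \eqref{robterm} the prefactor is $\alpha n^{1-\beta}$. When $\beta>1$ it tends to $0$ while the sum stays bounded, so the whole slow term vanishes and $\kappa=0$. When $\beta=1$ the prefactor equals the constant $\alpha$; each crossing bond has its two endpoints on opposite sides of the origin, so the increment $G(s,\tfrac{x}{n})-G(s,\tfrac{z}{n})$ converges to the genuine jump $G(s,0^{-})-G(s,0^{+})$ (no Taylor expansion is needed here, which is why the weight is now $\sum_{w\ge1}w\,p(w)=m$ rather than $\tfrac{\sigma^{2}}{2}$). Grouping the doubly-counted bonds by their two orientations and using a boundary replacement lemma to identify $\eta_s^{n}(z)$ with $\varrho(s,0^{+})$ for $z\ge0$ and with $\varrho(s,0^{-})$ for $z<0$, the slow term converges to $-\alpha m[\varrho(s,0^{+})-\varrho(s,0^{-})][G(s,0^{+})-G(s,0^{-})]$, which furnishes precisely the Robin contribution of $F_{\textrm{Rob}}$ with $\kappa=\tfrac{2m\alpha}{\sigma^{2}}$. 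Collecting the three contributions shows the functional converges to $F_{\textrm{Rob}}$ with the announced $\kappa$, and Markov's inequality closes the bound.

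I expect the main obstacle to be the replacement lemmas of Section~\ref{secheurwithout} (used through Propositions~\ref{convdisc} and \ref{convbound}): one must truncate the long-range series generated by \eqref{robterm}--\eqref{neuterm}, bound their tails uniformly in $n$ using the finiteness of $\sigma^{2}$ and $m$, and close them in terms of the macroscopic traces $\varrho(s,0^{\pm})$, all on the infinite line and with $G$ genuinely discontinuous at $0$. The delicate points are to show that both the first-order boundary defect of the fast sum and the crossing sum of the slow bonds can first be replaced by the box averages $\eta_s^{\rightarrow n\varepsilon}(0),\eta_s^{\leftarrow n\varepsilon}(0)$ and then by the traces, uniformly over the full range $\beta\ge1$, and to control the remainder of the Taylor expansions precisely at the origin, where the discontinuity of $G$ sits.
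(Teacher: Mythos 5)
Your proposal is correct and follows the paper's architecture almost step for step: the reduction to a sup-in-time probability bound, the substitution of the traces $\varrho(s,0^{\pm})$ by the box averages $\langle \pi_s,\iota_{\varepsilon}^{0^{\pm}}\rangle$ (justified by the energy estimate on $\mathbb{R}^{*}$, Proposition~\ref{estenerbarfor}), the smoothing of the indicators plus Portmanteau, the elimination of the martingale via Doob's inequality and Proposition~\ref{tightcond2dif}, the use of the decomposition \eqref{robterm}--\eqref{neuterm} from Proposition~\ref{gendif}, and the treatment of the slow term (vanishing like $n^{1-\beta}$ for $\beta>1$; converging, for $\beta=1$, to the Robin contribution with weight $\sum_{w\geq 1}w\,p(w)=m$) exactly as in Proposition~\ref{convrob}. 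The one point where you genuinely deviate is the fast term \eqref{neuterm}: the paper splits each half-line sum according to whether $z$ is within distance $\varepsilon n$ of the origin or not, and proves bespoke estimates for the two regimes (Propositions \ref{convrneu} and \ref{lemconvneum}, Corollary \ref{princneum}), whereas you add and subtract the crossing pairs so as to reconstruct the full kernel $\mcb{K}_n$ acting on the smooth extensions $G_{\pm}$; this lets you recycle the no-barrier toolkit, namely Proposition~\ref{convdisc} for the bulk and a Proposition~\ref{neum1}-type first-order expansion together with Lemma~\ref{replemma} for the flux, and it produces the same constants ($\sum_{w\geq 1}w^{2}p(w)=\tfrac{\sigma^{2}}{2}$ for the Neumann flux, $m$ for the Robin term), so both routes are valid; yours is slightly more economical in that it avoids new appendix estimates. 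One caution on the step you yourself flag as delicate: the Taylor remainder summed over crossing pairs cannot be bounded brutally ``using the finiteness of $\sigma^{2}$ and $m$'', because $\tfrac{1}{n}\sum_{z\geq 0}\sum_{x<0}(z-x)^{2}p(z-x)$ behaves like $\tfrac{1}{n}\sum_{w}w^{3}p(w)$, which may diverge for $\gamma\in(2,3]$; the correct control, as in Propositions \ref{neum1} and \ref{lemconvneum}, exploits the compact support of $G$ and the fact that the inner tail $\tau(z)=\sum_{w>z}w^{2}p(w)$ tends to zero, so that its Ces\`aro mean $\tfrac{1}{n}\sum_{z=0}^{b_G n}\tau(z)$ vanishes (or, in the long-range case, the explicit comparison $n^{2-\gamma}\int_{0}^{\varepsilon}\int_{2\varepsilon}^{\infty}(v-u)^{-\gamma}\,dv\,du\to 0$).
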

\begin{proof}
As in the previous proof,  it is enough to verify \eqref{eq:cond_1} for any $G \in \mcb S_{\textit{Rob}} $  and for $F_{\textit{Rob}}(t,\varrho,G,\mcb g,\frac{2m\alpha}{\sigma^2} \mathbbm{1}_{\beta = 1}) $. Using exactly the same arguments as above, we are left to prove that 
\begin{align}
 \liminf_{n \rightarrow \infty} \mathbb{Q}_n &\Big(  \sup_{0 \leq t \leq T} \Big| \int_{\mathbb{R}} \varrho(t,u) G(t,u) du -\int_{\mathbb{R}} \mcb g(u) G(0,u) du  
- \int_0^t  \int_{\mathbb{R}} \varrho(s,u)  \dfrac{\sigma^2}{2} \Delta G(s,u) du ds\nonumber\\
 -& \int_0^t  \int_{\mathbb{R}} \varrho(s,u)  \partial_s G(s,u) du ds
+   \frac{\sigma^2}{2} \int_0^t [ \partial_u G(s,0^{-})   \langle \pi_s, \iota_{\varepsilon}^{0^-} \rangle  - \partial_u G(s,0^{+})  \langle \pi_s, \iota_{\varepsilon}^{0^+} \rangle ] ds \nonumber \\
+ &  m \alpha \mathbbm{1}_{\beta = 1} \int_0^t  [    \langle \pi_s, \iota_{\varepsilon}^{0^+} \rangle  -  \langle \pi_s, \iota_{\varepsilon}^{0^-} \rangle  ]  [  G(s,0^{+})    -  G(s,0^{-})  ] ds \Big| > \dfrac{\delta}{16} \Big)=0.\label{f2term1adif}
\end{align}
 From the definitions of $\mcb M_{t}^{n}(G)$ in \eqref{defMnt}, $ \eta_s^{\rightarrow n \varepsilon}(0)$ and $ \eta_s^{\leftarrow n \varepsilon}(0)$ in  \eqref{medemp}, we can bound the probability in \eqref{f2term1adif} from above by the sum of the next two terms
\begin{equation} \label{f2term1bdif}
\liminf_{n \rightarrow \infty}  \mathbb{P}_{\mu_n}  \Big(  \sup_{0 \leq t \leq T} |\mcb M_{t}^{n}(G)| > \dfrac{\delta}{32} \Big)
\end{equation}
and
\begin{align}
 \liminf_{n \rightarrow \infty}  \mathbb{P}_{\mu_n} &\Big(  \sup_{0 \leq t \leq T}  \Big|  \int_{0}^{t}  n^2  \mcb L_{n}\langle \pi_{s}^{n},G_s \rangle ds - \int_0^t  \dfrac{\sigma^2}{2}  \langle \pi_s^n, \Delta G_s \rangle ds \nonumber \\
+ &  \frac{\sigma^2}{2} \int_0^t [ \partial_u G(s,0^{-})  \eta_s^{\leftarrow n \varepsilon}(0)   - \partial_u G(s,0^{+}) \eta_s^{\rightarrow n \varepsilon}(0)  ] ds \nonumber  \\
- &   m \alpha \mathbbm{1}_{\beta = 1} \int_0^t [  G(s,0^{-})    -  G(s,0^{+})  ] [   \eta_s^{\rightarrow n \varepsilon}(0)   -\eta_s^{\leftarrow n \varepsilon}(0)   ]   ds \Big| > \dfrac{\delta}{32} \Big). \label{f2term1cdif}
\end{align}
We get from Doob's inequality and Proposition \ref{tightcond2dif}  that  \eqref{f2term1bdif} is equal to zero.
From Proposition \ref{gendif}, we can bound \eqref{f2term1cdif} from above by the sum of the next two terms 
\begin{align} 
\limsup_{\varepsilon \rightarrow 0^+} \limsup_{n \rightarrow \infty}  &\mathbb{P}_{\mu_n}  \Big( \sup_{0 \leq t \leq T} \Big|\int_{0}^{t} \Big\{  \alpha n^{1-\beta}  \sum_{\{x,z\} \in  \mcb S}  [G(s,\tfrac{x}{n}) - G(s,\tfrac{z}{n}) ]  p(x-z)  \eta_s^n(z) \nonumber \\
-& m \alpha \mathbbm{1}_{\beta = 1} [G(s,0^{-})- G(s,0^{+})]  [ \eta_s^{\rightarrow n \varepsilon}(0) - \eta_s^{\leftarrow n \varepsilon}(0)] \Big\}   ds \Big|> \dfrac{\delta}{64}  \Big) \label{f2term1c3dif}
\end{align}
and
\begin{align}
 \limsup_{\varepsilon \rightarrow 0^+} \limsup_{n \rightarrow \infty}  &\mathbb{P}_{\mu_n}  \Big( \sup_{0 \leq t \leq T} \Big| \int_{0}^{t} \Big\{  \sum_{\{x,z\} \in \mcb  F} n [G(s,\tfrac{x}{n}) - G(s,\tfrac{z}{n}) ]  p(x-z)  \eta_s^n(z) \nonumber \\
-&   \frac{\sigma^2}{2} \Big(  \partial_u G(s,0^{+})   \eta_s^{\rightarrow \varepsilon n}(0)  -\partial_u G(s,0^{-})   \eta_s^{\leftarrow \varepsilon n}(0)  +  \frac{1}{n} \sum_{z } \Delta G (s, \tfrac{z}{n})\eta_s^n(z) \Big) \Big\}   ds \Big|> \frac{\delta}{64}  \Big)\label{f2term1c2dif}
\end{align}
From Proposition \ref{convrob} and Markov's inequality,  \eqref{f2term1c3dif} is equal to zero.
From Proposition \ref{convneu} and Markov's inequality,  \eqref{f2term1c2dif} is equal to zero. 
\end{proof}
\section{Energy estimates}  \label{secenerest}
In this section, our goal is to prove that $\varrho$ satisfies the second condition of weak solutions of \eqref{eqhyddifreal} and \eqref{eqhyddifrob}, depending on the values of $\beta$ and $\sigma^2 - \sigma_{\mcb S}^2$. We begin with an important result that does not depend on the dynamics and therefore it holds for any value of $\beta\geq 0$ and $\sigma^2 - \sigma_{\mcb S}^2$. The next result only depends on the bound for $H( \mu_n | \nu_a)$, where $\mu_n$ is a probability measure on $\Omega$ and $\nu_a$ is the Bernoulli product measure of parameter $a$ \textcolor{black}{introduced in Theorem \ref{hydlim}}. Recall that $\bar{\varrho}:=\varrho-a$.
\subsection{Static energy estimates}

We define the linear functional $l_{\varrho,1}$ on 
$C_c^{0,0}  ( [0, T] \times \mathbb{R})$ by
\begin{align*}
l_{ \varrho,1 }(G):= \int_0^T \int_{\mathbb{R}} \bar\varrho(t,u)  G(t,u)  du dt. 
\end{align*}
\begin{prop} \label{estenergstat0}
$l_{\varrho,1}$ is $\mathbb{Q}-$ almost surely continuous.
\end{prop}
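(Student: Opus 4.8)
The plan is to reduce the almost-sure continuity of $l_{\varrho,1}$ (understood with respect to the $\|\cdot\|_{2,[0,T]\times\mathbb{R}}$ norm) to an almost-sure $L^2$-bound, and to prove that bound by an entropy/variational argument. Since $l_{\varrho,1}$ is linear, continuity on $\big(C_c^{0,0}([0,T]\times\mathbb{R}),\|\cdot\|_{2,[0,T]\times\mathbb{R}}\big)$ is equivalent to the existence of a $\mathbb{Q}$-almost surely finite constant $C$ with $|l_{\varrho,1}(G)|\le C\|G\|_{2,[0,T]\times\mathbb{R}}$ for every $G$. First I would fix a countable family $\mathcal{A}\subset C_c^{0,0}([0,T]\times\mathbb{R})$ dense in $L^2([0,T]\times\mathbb{R})$ and a constant $\kappa>0$ to be tuned, and show
\[
\mathbb{E}_{\mathbb{Q}}\Big[\sup_{G\in\mathcal{A}}\big\{l_{\varrho,1}(G)-\kappa\|G\|_{2,[0,T]\times\mathbb{R}}^{2}\big\}\Big]<\infty.
\]
Granting this, the supremum $M$ is $\mathbb{Q}$-a.s. finite; replacing $G$ by $\lambda G$ and optimizing over $\lambda>0$ yields $|l_{\varrho,1}(G)|\le 2\sqrt{\kappa M}\,\|G\|_{2,[0,T]\times\mathbb{R}}$, first on $\mathcal{A}$ and then, by density, on all of $C_c^{0,0}([0,T]\times\mathbb{R})$, which is the claim. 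Optimizing the integrand pointwise shows the penalized supremum equals $\tfrac1{4\kappa}\|\bar\varrho\|^2_{2,[0,T]\times\mathbb{R}}$, so this step is exactly the assertion that $\bar\varrho\in L^2([0,T]\times\mathbb{R})$ in the mean, which is why a genuine microscopic estimate is needed: in infinite volume the finiteness of $\|\bar\varrho\|_2$ is not automatic.

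To estimate the expectation I would descend to the microscopic level. For a finite subfamily $G_1,\dots,G_k\in\mathcal{A}$ the map $\pi_{\cdot}\mapsto\max_{1\le i\le k}\{l_{\varrho,1}(G_i)-\kappa\|G_i\|_{2,[0,T]\times\mathbb{R}}^2\}$ is bounded and continuous for the Skorohod topology (each $G_i$ has compact support and $\varrho$ is bounded), so since $\mathbb{Q}$ is a limit point of $(\mathbb{Q}_n)_{n\ge1}$,
\[
\mathbb{E}_{\mathbb{Q}}\Big[\max_{1\le i\le k}\big\{l_{\varrho,1}(G_i)-\kappa\|G_i\|_{2}^{2}\big\}\Big]\le\liminf_{n\to\infty}\mathbb{E}_{\mu_n}\Big[\max_{1\le i\le k}\big\{\ell^n(G_i)-\kappa\|G_i\|_{2}^{2}\big\}\Big],
\]
where $\ell^n(G):=\tfrac1n\sum_x\int_0^T(\eta^n_s(x)-a)G(s,\tfrac{x}{n})\,ds$ is the discrete analogue of $l_{\varrho,1}(G)$. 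If the right-hand side is bounded by a constant independent of $k$, then letting $k\to\infty$ and invoking monotone convergence yields the bound of the first paragraph.

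For the microscopic bound I would apply the entropy inequality at the level of the path measures with reference $\mathbb{P}_{\nu_a}$ and parameter $n$; since entropy is conserved by the common dynamics, $H(\mathbb{P}_{\mu_n}|\mathbb{P}_{\nu_a})=H(\mu_n|\nu_a)\lesssim n$, so the microscopic expectation is $\le \tfrac1n H(\mu_n|\nu_a)+\tfrac1n\log\mathbb{E}_{\mathbb{P}_{\nu_a}}[e^{n\Phi^n}]$ with $\Phi^n=\max_i\{\ell^n(G_i)-\kappa\|G_i\|_2^2\}$. Bounding $e^{n\Phi^n}\le\sum_i e^{n(\ell^n(G_i)-\kappa\|G_i\|_2^2)}$ and using the Feynman–Kac formula together with the Rayleigh–Ritz characterization of the principal eigenvalue, each summand is controlled by $\exp\!\big(-n\kappa\|G_i\|_2^2+\int_0^T\lambda_n(s;G_i)\,ds\big)$, where
\[
\lambda_n(s;G)=\sup_{f}\Big\{\sum_x G(s,\tfrac{x}{n})\,\langle(\eta(x)-a)f^2\rangle_{\nu_a}-n^2\mathcal{D}_n(f)\Big\},
\]
the supremum running over $f\in L^2(\nu_a)$ with $\langle f^2\rangle_{\nu_a}=1$ and $\mathcal{D}_n$ the Dirichlet form of $\mcb{L}_n$. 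The factor $\log k$ produced by the sum over the finite family is killed by the prefactor $\tfrac1n$ in the limit, so it is harmless.

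The heart of the matter, and the step I expect to be the main obstacle, is the bond estimate
\[
\sum_x G(s,\tfrac{x}{n})\,\langle(\eta(x)-a)f^2\rangle_{\nu_a}\le \tfrac{1}{2}n^2\mathcal{D}_n(f)+C\sum_x G(s,\tfrac{x}{n})^2,
\]
with $C$ independent of $n$ and, crucially, of the support of $G$. I would derive it by rewriting the left-hand side over the bonds of the dynamics, using the product structure of $\nu_a$ and the measure-preserving swaps $\eta\mapsto\eta^{x,y}$ so that the bond gradients $f^{x,y}-f$ appear, and then applying $2ab\le Aa^2+A^{-1}b^2$ to absorb the gradient part into $-n^2\mathcal{D}_n(f)$ while the remainder rebuilds the Riemann sum $\tfrac1n\sum_x G(s,\tfrac{x}{n})^2\approx\int_{\mathbb{R}}G(s,u)^2\,du$. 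Taking $\kappa=C$, the penalty cancels $\int_0^T\lambda_n(s;G_i)\,ds$, each exponential moment is $\le 1$, the sum over the family is $\le k$, and the microscopic expectation is $\lesssim 1+\tfrac{\log k}{n}$, uniformly in $k$. Two features demand real care: the infinite volume, which forces $C$ to be uniform over arbitrarily large supports $b_G$ (this is precisely where the scaling $H(\mu_n|\nu_a)\lesssim n$ is indispensable), and the long-range nature of $p(\cdot)$, whose bonds of all lengths enter both $\mathcal{D}_n$ and the bond decomposition; here the finite-variance assumption $\sum_z z^2 p(z)<\infty$ is what guarantees the relevant sums converge.
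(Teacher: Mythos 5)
Your reduction steps (penalized supremum, passage from $\mathbb{Q}_n$ to $\mathbb{Q}$ through bounded continuous functionals, entropy inequality, the $\tfrac{\log k}{n}$ trick, monotone convergence) are sound and parallel the paper's. The gap is the estimate you yourself call the heart of the matter: the claimed bond inequality
\begin{equation*}
\sum_x G\big(s,\tfrac{x}{n}\big)\,\big\langle(\eta(x)-a)f^2\big\rangle_{\nu_a}\;\leq\;\tfrac{1}{2}\,n^2 D_n\big(\sqrt{f^2},\nu_a\big)+C\sum_x G\big(s,\tfrac{x}{n}\big)^2
\end{equation*}
is false with a constant $C$ uniform over $G$, and no Young-type absorption can rescue it, because $\eta(x)-a$ is \emph{not} a gradient. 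The exclusion dynamics conserves the number of particles, so the Dirichlet form is blind to densities that tilt the overall particle density slowly over a region much wider than the support of $G$. Concretely, take $G(s,\cdot)\geq 0$ with $\int G(s,u)\,du>0$, let $\psi$ be a smooth bump with $\psi\equiv 1$ on $[-1,1]$ and compact support, fix $0<\epsilon_0\leq\min(a,1-a)$, and for $K\geq b_G n$ let $f^2$ be the density relative to $\nu_a$ of the inhomogeneous Bernoulli product measure with marginals $\rho_x=a+\epsilon_0\psi(x/K)$. Then $\langle(\eta(x)-a)f^2\rangle_{\nu_a}=\epsilon_0\psi(x/K)$, so the left-hand side equals $\epsilon_0\sum_xG(s,\tfrac{x}{n})\approx\epsilon_0\,n\int G(s,u)\,du$, while a direct computation gives $I_{x,y}(\sqrt{f^2},\nu_a)\lesssim\epsilon_0^{2}\big(\psi(x/K)-\psi(y/K)\big)^2$, whence $n^2D_n(\sqrt{f^2},\nu_a)\lesssim n^2\epsilon_0^{2}\sigma^2K^{-1}\rightarrow 0$ as $K\rightarrow\infty$. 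Hence your Rayleigh--Ritz supremum satisfies $\lambda_n(s;G)\geq\epsilon_0\, n\int G(s,u)\,du$: it is \emph{linear} in $G$, and cannot be dominated by $Cn\|G(s,\cdot)\|^2_{2,\mathbb{R}}$ uniformly (rescale $G\mapsto\delta G$ and let $\delta\rightarrow 0^+$). Consequently no choice of $\kappa$ makes the exponential moments bounded by $1$ uniformly over a dense family, and the final expectation is not finite. The failure is intrinsic to the per-time eigenvalue (Feynman--Kac) bound, which discards the fact that creating such a macroscopic tilt starting from equilibrium takes a diverging amount of time.

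The paper's proof of this proposition is deliberately \emph{static} and never touches the Dirichlet form; indeed the paper announces it as a result ``that does not depend on the dynamics.'' After the entropy inequality and the $\log k$ estimate, it applies Jensen's inequality with respect to the normalized time measure $dt/T$ and Fubini, so that only expectations at fixed times remain; since $\nu_a$ is invariant for the dynamics and is a product measure, $\eta_t^n\sim\nu_a$ for each fixed $t$ and one computes exactly $\mathbb{E}_{\nu_a}\big[\exp\big\{n\big[\pi_t^n(G(t,\cdot))-J_n(G(t,\cdot))\big]\big\}\big]=1$, where $J_n(G)=n^{-1}\sum_xh\big(G(\tfrac{x}{n})\big)$ and $h(\theta)=\log(ae^{\theta}+1-a)$. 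The quadratic penalization then comes from the elementary pointwise bound $h(\theta)-a\theta\leq C\theta^2$, valid with a universal $C$, which is what yields a constant independent of $G$ and of its support. The machinery you propose (Feynman--Kac, variational formula, absorption into the Dirichlet form) is precisely what the paper reserves for the functional $l_{\varrho,2}$ in Proposition \ref{estenergdifreal}, where a preliminary summation by parts converts $\partial_uG$ into genuine occupation gradients $\eta(x)-\eta(x\pm 1)$ that the swaps $\eta\mapsto\eta^{x,y}$ can reach; the remark preceding that proposition stresses exactly this distinction between the two functionals, and your proposal in effect applies the second argument to the first functional, where it cannot work.
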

\begin{proof}
The proof is strongly inspired by  Section 4 of \cite{jara2009hydrodynamic}.  We define $M_{a}: \mathbb{R} \rightarrow \mathbb{R}$ by
$
M_{a}(\theta) := \mathbb{E}_{\nu_{a}}[ e^{\theta \eta(0)} ]  = a  e^{\theta } + (1-a)
$
and  $h: \mathbb{R} \rightarrow \mathbb{R}$ by
\begin{align*}
h(\theta): = \log \big( M_{a}(\theta) \big) = \log \big(  a  e^{\theta } + (1-a) \big), \forall \theta \in \mathbb{R}.
\end{align*}
For any sequence of random variables $X_1, \ldots, X_k$, we have that
\begin{align} \label{estimate}
\log \big( \mathbb{E} [ \exp \{ \max_{j=1, \ldots, k}  X_j \} ] \big)  \leq \log \big( k \max_{j=1, \ldots, k} \mathbb{E} [ \exp \{   X_j \}] \big) .
\end{align}
Let $ (G_k)_{k \geq 1}$ be a sequence of functions in 
$C_c^{0,0} \big( [0, T] \times \mathbb{R} \big)$.
For a function $G: \mathbb{R} \rightarrow \mathbb{R}$, let us define $J_n(G): = n^{-1} \sum_{x \in \mathbb{Z}} h \big( G ( \tfrac{x}{n} ) \big)$ for every $n \geq 1$ and $J(G): = \int_{\mathbb{R}} h \big( G ( u ) \big) du$.  We observe that
\begin{align*}
&\mathbb{E}_{\mu_n} \Big[ \max_{j=1, \ldots, k} \Big\{ \frac{1}{T} \int_0^T \big[ \pi_t^n \big(G_j (t, \cdot) \big) - J_n \big ( G_j (t, \cdot ) \big) \big] dt \Big\} \Big] \\
\leq & C_a + \frac{\log(k)}{n} +   \frac{1}{n} \log \Big(   \max_{j=1, \ldots, k} \mathbb{E}_{\nu_{a}} \Big[ \exp \Big\{ n   \frac{1}{T} \int_0^T \big[ \pi_t^n \big(G_j (t, \cdot) \big) - J_n \big ( G_j (t, \cdot ) \big) \big] dt \Big\} \Big]  \Big)\\
\leq & C_a + \frac{\log(k)}{n} +   \frac{1}{n} \log \Big(   \max_{j=1, \ldots, k} \frac{1}{T} \int_0^T \mathbb{E}_{\nu_{a}} \Big[ \exp \Big\{   n \big[ \pi_t^n \big(G_j (t, \cdot) \big) - J_n \big ( G_j (t, \cdot ) \big) \big]  \Big\} \Big] dt  \Big)  \\
= &  C_a + \frac{\log(k)}{n}.
\end{align*}
In the first inequality we used  the entropy inequality and \eqref{estimate}. In the second inequality we used Jensen's and Fubini's theorem. The last equality follows from the fact $\nu_a$ is a product measure. 
We know that $\mathbb{Q}$ is the limit point of some subsequence $\mathbb{Q}_{n'}$. Taking the limit when $n' \rightarrow \infty$, we can conclude that
\begin{align*}
\mathbb{E}_{\mathbb{Q}} \Big[ \sup_G  \int_0^T  \int_{\mathbb{R}}  \big[\varrho(t,u) G(t,u)-  h \big( G(t,u) \big) \big] du  dt \Big]\leq C_aT,
\end{align*}
 where the supremum is over $G \in C_c^{0,0} \big( [0,T] \times \mathbb{R}  \big)$. Fix $G \in C_c^{0,0} \big( [0,T] \times \mathbb{R}  \big)$.  Now observe that if  $f: \mathbb{R} \rightarrow \mathbb{R}$ is defined on $  x \in \mathbb{R}$ by $f(x) = \log (a e^{x} + 1 - a ) - ax=h(x)-ax$, then a simple computation shows that there exists $C > 0$ such that $f(x) \leq C x^2, \forall x \in \mathbb{R}.$
Therefore, there exists $C>0$ such that
\begin{align*}
l_{ \varrho,1 }(G)=& \int_0^T \int_{\mathbb{R}}  \big[\varrho(t,u) G(t,u)-  h \big( G(t,u) \big) \big] du dt +  \int_0^T \int_{\mathbb{R}}  \big[  h \big( G(t,u) \big) - a   G(t,u) \big] du dt \\
\leq & \int_0^T \int_{\mathbb{R}}  \big[\varrho(t,u) G(t,u)-  h \big( G(t,u) \big) \big] du dt +  \int_0^T \int_{\mathbb{R}}  C [   G(t,u) ]^2 du dt, 
\end{align*}
 which leads to
 \begin{align*}
 l_{ \varrho,1 }(G) - C  \| G \|^2_{2,[0,T] \times \mathbb{R} } \leq \int_0^T \int_{\mathbb{R}}  \big[\varrho(t,u) G(t,u)-  h \big( G(t,u) \big) \big] du dt.
 \end{align*}
 Since $G \in C_c^{0,0} \big( [0,T] \times \mathbb{R}  \big)$ is arbitrary, we conclude that
 \begin{align*}
\mathbb{E}_{\mathbb{Q}} \left[ \sup_{G} \Big\{ l_{\varrho,1}(G) - C   \|G\|^2_{2,[0,T] \times \mathbb{R} } \Big\} \right] \leq  C_a T,
\end{align*}
which leads to the desired result.
\end{proof}
Since $C_{c}^{0,0} \big( [0, T] \times \mathbb{R}\big )$ is dense in $L^2 \big( [0, T] \times \mathbb{R}\big )$, from last proposition and Riesz's Representation Theorem we conclude that 
\begin{align}\label{estenergstat}
\mathbb{Q} \Big( \pi_{ \cdot} \in \mcb D \big([0,T], \mcb{M}^+(\mathbb{R}) \big): \int_0^T \int_{\mathbb{R}} [\bar{\varrho}(t,u) ]^2   du dt < \infty \Big) = 1.
\end{align}

\subsection{Energy estimates without a slow barrier}

In this subsection our goal is to prove the next result.

\begin{prop} \label{estenergsembarlenfor}
Assume that $\sigma_{\mcb S}^2 < \sigma^2$. Then
\begin{align*}
\mathbb{Q} \Big (\pi_{ \cdot} \in \mcb D([0,T], \mcb{M}^+(\mathbb{R})): \bar{\varrho} \in L^2 \big(0, T ; \mcb{H}^1  ( \mathbb{R} ) \big)  \Big) = 1.
\end{align*}
\end{prop}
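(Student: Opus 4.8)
The plan is to mirror the proof of Proposition \ref{estenergstat0}, replacing the functional $l_{\varrho,1}$ by one that represents the \emph{weak spatial derivative} of $\bar\varrho$. On the space of functions $G$ that are continuous in time and smooth with compact support in space, define the linear functional
\[
l_{\varrho,2}(G):=\int_0^T\int_{\mathbb{R}}\bar\varrho(t,u)\,\partial_u G(t,u)\,du\,dt.
\]
If $l_{\varrho,2}$ is shown to be $\mathbb{Q}$-almost surely continuous for the norm $\|\cdot\|_{2,[0,T]\times\mathbb{R}}$, then a Hahn--Banach extension followed by Riesz's representation theorem furnishes $g\in L^2([0,T]\times\mathbb{R})$ with $l_{\varrho,2}(G)=\int_0^T\int_{\mathbb{R}}g\,G$, so that $-g$ is the weak derivative $\partial_u\bar\varrho$ and lies in $L^2$. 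Combined with the static bound \eqref{estenergstat}, which already gives $\int_0^T\int_{\mathbb{R}}[\bar\varrho(t,u)]^2\,du\,dt<\infty$ almost surely, this yields $\bar\varrho(t,\cdot)\in\mcb{H}^1(\mathbb{R})$ for almost every $t$ and hence $\bar\varrho\in L^2\big(0,T;\mcb{H}^1(\mathbb{R})\big)$, which is the assertion.

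As in Proposition \ref{estenergstat0}, continuity reduces to a bound of the form $\mathbb{E}_{\mathbb{Q}}\big[\sup_G\{l_{\varrho,2}(G)-\kappa_0\|G\|_{2,[0,T]\times\mathbb{R}}^2\}\big]\le C$, the supremum running over a countable family dense in $L^2$. Using the maximum-over-a-finite-subfamily device together with inequality \eqref{estimate}, I would pass to the microscopic level along the subsequence defining $\mathbb{Q}$: the quantity $l_{\varrho,2}(G)$ is the limit of $\int_0^T\frac1n\sum_x\eta_s^n(x)\,\partial_u G(s,\tfrac{x}{n})\,ds$. A first-order Taylor expansion $\partial_u G(s,\tfrac xn)\approx n[G(s,\tfrac{x+1}{n})-G(s,\tfrac xn)]$ followed by a discrete summation by parts turns this into $-\int_0^T\sum_x[\eta_s^n(x)-\eta_s^n(x-1)]\,G(s,\tfrac xn)\,ds$, a linear form tested against exactly the nearest-neighbour increments that feed the Dirichlet form of the dynamics.

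The core is then the standard variational estimate. By the entropy inequality (here is where $H(\mu_n|\nu_a)\lesssim n$ enters) and the Feynman--Kac formula for the process with generator $n^2\mcb L_n$, the relevant exponential moment under $\mathbb{P}_{\mu_n}$ is controlled, slice by slice in time, by $\sup_f\{\langle h^G_{n,s},f\rangle_{\nu_a}-n\,\mf D_n(\sqrt f)\}$, where $h^G_{n,s}(\eta)=-\sum_x[\eta(x)-\eta(x-1)]G(s,\tfrac xn)$, the supremum is over $\nu_a$-densities $f$, and $\mf D_n(\sqrt f)=\langle\sqrt f,-\mcb L_n\sqrt f\rangle_{\nu_a}=\tfrac14\sum_{\{x,y\}\in\mcb B}c^n_{x,y}\,p(x-y)\,I_{x,y}(f)$ is the Dirichlet form, with $I_{x,y}(f)=\mathbb{E}_{\nu_a}[(\sqrt{f(\eta^{x,y})}-\sqrt{f(\eta)})^2]$, $c^n_{x,y}=1$ on $\mcb F$ and $c^n_{x,y}=\alpha n^{-\beta}$ on $\mcb S$. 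Reversibility of $\nu_a$ and Cauchy--Schwarz give $|\langle\eta(x)-\eta(x-1),f\rangle_{\nu_a}|\le\sqrt{I_{x-1,x}(f)}$, and a bond-by-bond Young inequality bounds $\langle h^G_{n,s},f\rangle_{\nu_a}$ by $\kappa_0\,\tfrac1n\sum_x G(s,\tfrac xn)^2$ (a Riemann-sum approximation of $\kappa_0\|G(s,\cdot)\|_{2,\mathbb{R}}^2$, the finite variance $\sum_z z^2p(z)=\sigma^2<\infty$ being used to control the Taylor and Riemann errors) plus a multiple of $n\,\mf D_n(\sqrt f)$, which the penalization absorbs as soon as the bonds involved are \emph{fast}, since each fast bond $\{x,y\}$ enters $n\,\mf D_n$ with the full weight $\tfrac n4 p(x-y)$.

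The only increment not immediately dominated this way is the one across the origin, $\eta(0)-\eta(-1)$: its bond $\{-1,0\}$ may lie in $\mcb S$, in which case its contribution to $n\,\mf D_n$ carries only the weak weight $\tfrac\alpha4 n^{1-\beta}p(1)$, which fails to absorb $G(s,0)\sqrt{I_{-1,0}(f)}$ once $\beta\ge1$. This is precisely where the hypothesis $\sigma_{\mcb S}^2<\sigma^2$ is decisive: it forces $\mcb S\subsetneq\mcb S_0$, hence the existence of at least one \emph{fast} bond $\{x_1,x_2\}$ with $x_1<0\le x_2$, entering $\mf D_n$ with full weight $p(x_2-x_1)>0$. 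Telescoping
\[
\eta(0)-\eta(-1)=\big[\eta(0)-\eta(x_2)\big]+\big[\eta(x_2)-\eta(x_1)\big]+\big[\eta(x_1)-\eta(-1)\big]
\]
through the fast nearest-neighbour bonds inside $\mathbb{N}$ and inside $\mathbb{Z}_{-}^{*}$ (fast because both endpoints lie on the same side of the origin and $p(1)>0$) together with this one fast cross-bond lets me control $\eta(0)-\eta(-1)$ by a finite sum of increments $\sqrt{I_{x,y}(f)}$ over fast bonds, uniformly in $\beta\ge0$; the telescoping constant depends on the fixed barrier configuration through $|x_1|$ and $x_2$ but not on $n$. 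The vanishing of any surviving boundary term at the origin is exactly what upgrades the regularity from $\mcb{H}^1(\mathbb{R}^*)$ to $\mcb{H}^1(\mathbb{R})$. I expect the main obstacle to be making the Young/variational step uniform in $n$ in the infinite-volume, long-range setting, i.e. keeping the constant $\kappa_0$ in front of $\|G\|_{2,[0,T]\times\mathbb{R}}^2$ under control while summing over all bonds and handling the tails of $p(\cdot)$, and, combined with this, transferring the microscopic bound through Portmanteau's theorem and the limit $n\to\infty$ to obtain the assertion for $\mathbb{Q}$.
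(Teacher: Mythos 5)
Your proposal is correct and follows essentially the same route as the paper's proof (Propositions \ref{estenergdifreal} and \ref{estenergsembarlenfor}): the same functional $l_{\varrho,2}$, the same entropy--Feynman--Kac variational estimate with summation by parts isolating the single increment across the origin, the same use of $\sigma_{\mcb S}^2<\sigma^2$ to produce a fast bridge bond $\{x_1,x_2\}$ and telescope that increment through a path of fast bonds absorbed by the Dirichlet form, and the same conclusion via density and Riesz representation combined with the static bound \eqref{estenergstat}.
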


Before we prove last result we need the following auxiliary result. To that end, we define the linear functional $l_{\varrho,2}$ on {$C_c^{0,\infty} ( [0, T] \times \mathbb{R}) \subset L^2([0,T] \times \mathbb{R})$} by
\begin{align*}
l_{ \varrho,2 }(G):=&   \int_0^T \int_{\mathbb{R}} \partial_u G(t,u) \bar{\varrho}(t,u) du dt.
\end{align*}

\begin{prop} \label{estenergdifreal}
 $l_{\varrho,2}$ is $\mathbb{Q}-$ almost surely continuous.
\end{prop}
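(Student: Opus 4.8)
prove that $l_{\varrho,2}(G) = \int_0^T \int_{\mathbb{R}} \partial_u G(t,u) \bar{\varrho}(t,u)\, du\, dt$ is $\mathbb{Q}$-a.s. continuous on $C_c^{0,\infty}([0,T]\times\mathbb{R})$.

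The strategy mirrors the proof of Proposition 4.2 in [jara2009hydrodynamic] and the static estimate above. The key is that $l_{\varrho,2}$ should be bounded by a constant times $\|G\|_{2,[0,T]\times\mathbb{R}}$, which by Riesz then yields that $\bar\varrho$ has a weak derivative in $L^2$. The idea is to express $l_{\varrho,2}(G)$ as a limit of discrete objects (discrete gradients of the empirical measure against $\partial_u G$), use an entropy/Feynman-Kac bound to control these by a Dirichlet-form-type quantity, and then bound the Dirichlet form by $\|G\|^2_{2}$ plus a constant. Let me plan the steps.

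First I would replace $\partial_u G(t,u)\bar\varrho(t,u)$ at the macroscopic level by its discrete analogue. Integrating by parts (or rather, using the discrete gradient), $l_{\varrho,2}(G)$ is the $\mathbb{Q}$-a.s. limit of expressions of the form $\tfrac{1}{n}\sum_{x} \partial_u G(t,\tfrac{x}{n})[\varrho(t,\tfrac{x+1}{n}) - \varrho(t,\tfrac{x}{n})]\cdot n$ integrated in time, which in turn comes from the empirical measure $\langle \pi_t^n, \cdot\rangle$. Concretely, following the standard recipe, I would introduce for each realization the quantity

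\begin{align*}
\mathbb{E}_{\mu_n}\Big[\sup_G \Big\{ \int_0^T \tfrac{1}{n}\sum_x \partial_u G(t,\tfrac{x}{n})\, n[\eta_t^n(x+1)-\eta_t^n(x)]\, dt - \kappa_0\|G\|^2_{2} \Big\}\Big]
\end{align*}

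and show it is bounded by $C_a T$ uniformly in $n$, for a suitable constant $\kappa_0$. This is done via the entropy inequality $H(\mu_n|\nu_a)\lesssim n$ together with a Feynman-Kac / variational bound: the exponential moment under $\nu_a$ of the time-integral of the discrete gradient term is controlled by an eigenvalue estimate, which reduces to bounding a Dirichlet form $\langle \mathcal{L}_n f, f\rangle_{\nu_a}$ against the carré-du-champ. The quadratic $\|G\|^2_2$ absorbs the term $\sum_x [\partial_u G(\tfrac{x}{n})]^2$ coming from the $2ab\le a^2+b^2$ splitting.

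The point where the \emph{assumption} $\sigma_{\mcb S}^2 < \sigma^2$ (equivalently $\mcb S \subsetneq \mcb S_0$, so there is a fast bond between $\mathbb{Z}_{-}^*$ and $\mathbb{N}$) enters is crucial: the Dirichlet form that upper-bounds the exponential moment is the \emph{full} Dirichlet form with all bonds fast across the origin. When there is a fast bond crossing $0$, the gradient across the origin is controlled with the fast rate $1$ rather than the slow rate $\alpha n^{-\beta}$, so no degeneracy occurs at $u=0$ and the estimate yields $\bar\varrho\in L^2(0,T;\mcb H^1(\mathbb{R}))$ with \emph{no boundary discontinuity}. This is exactly why the conclusion is $\mcb H^1(\mathbb{R})$ and not merely $\mcb H^1(\mathbb{R}^*)$: the full-line Sobolev regularity forces $\bar\varrho(s,0^+)=\bar\varrho(s,0^-)$, consistent with the heat equation without boundary conditions.

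After establishing the uniform bound, I pass to the limit along the subsequence defining $\mathbb{Q}$. Lower semicontinuity of the supremum functional (the map $\varrho\mapsto \sup_G\{l_{\varrho,2}(G)-\kappa_0\|G\|_2^2\}$ is a sup of continuous functionals, hence l.s.c.\ for the relevant topology) gives
\begin{align*}
\mathbb{E}_{\mathbb{Q}}\Big[\sup_G\big\{ l_{\varrho,2}(G) - \kappa_0\|G\|^2_{2,[0,T]\times\mathbb{R}} \big\}\Big] \le C_a T < \infty,
\end{align*}
where the supremum ranges over $G\in C_c^{0,\infty}([0,T]\times\mathbb{R})$. Since the inner supremum is finite $\mathbb{Q}$-a.s., there is a $\mathbb{Q}$-a.s.\ finite random constant $C(\varrho)$ with $l_{\varrho,2}(G)\le C(\varrho)\|G\|_{2,[0,T]\times\mathbb{R}}$ for all such $G$, which is precisely the asserted continuity of the linear functional $l_{\varrho,2}$.

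\textbf{Main obstacle.} The delicate part is the Dirichlet-form bound for the long-range dynamics in \emph{infinite volume}. Unlike the nearest-neighbor finite-volume case, here the gradient $\partial_u G(t,\tfrac{x}{n})[\eta_t^n(y)-\eta_t^n(x)]$ must be matched to the carré-du-champ of $\mcb L_n$, which involves arbitrarily long jumps $p(y-x)$; one must carefully telescope a long-jump increment $\eta(y)-\eta(x)$ into a sum of elementary increments and control the resulting cross terms using finite variance $\sigma^2<\infty$. Ensuring the constants remain uniform in $n$ (and that the tails of the series are summable) is the technical heart, and it is exactly here that the finite-variance hypothesis on $p(\cdot)$ and the existence of a fast crossing bond are both used.
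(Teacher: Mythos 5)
There is a genuine gap here, and it is exactly the pitfall that the paper's remark preceding its proof warns about. First, your discrete object is not a discretization of $l_{\varrho,2}$: a discrete summation by parts gives
\begin{equation*}
\sum_x \partial_u G\big(t,\tfrac{x}{n}\big)\big[\eta^n_t(x+1)-\eta^n_t(x)\big]
=\sum_x \big[\partial_u G\big(t,\tfrac{x-1}{n}\big)-\partial_u G\big(t,\tfrac{x}{n}\big)\big]\eta^n_t(x)
\approx -\tfrac{1}{n}\sum_x \Delta G\big(t,\tfrac{x}{n}\big)\eta^n_t(x),
\end{equation*}
so your quantity approximates $-\int_0^T\int_{\mathbb{R}}\Delta G\,\varrho\,du\,dt$ (formally $\int\int\partial_u G\,\partial_u\varrho$), not $l_{\varrho,2}(G)=\int_0^T\int_{\mathbb{R}}\partial_u G\,\bar{\varrho}\,du\,dt$; you have taken one derivative too many. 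Second, and more fatally, even for your object the Feynman--Kac/Young step cannot produce the quadratic term $\|G\|^2_{2,[0,T]\times\mathbb{R}}$. When $\partial_u G$ multiplies the increments $\eta(x+1)-\eta(x)$, the Young weights must be of order $|\partial_u G(t,\tfrac{x}{n})|/n$ so that the Dirichlet form $n\langle\mcb{L}_n\sqrt{f},\sqrt{f}\rangle_{\nu_a}$ absorbs the terms $I_{x,x+1}(\sqrt{f},\nu_a)$, and what is left over is of order $\tfrac{1}{n}\sum_x[\partial_u G(t,\tfrac{x}{n})]^2\approx\|\partial_u G(t,\cdot)\|^2_{2,\mathbb{R}}$. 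Your assertion that ``the quadratic $\|G\|_2^2$ absorbs the term $\sum_x[\partial_u G(\tfrac{x}{n})]^2$'' is false: $\|\partial_u G\|_2$ is not controlled by $\|G\|_2$ (take $G$ rapidly oscillating). A bound $l_{\varrho,2}(G)\lesssim\|\partial_u G\|^2_{2}+1$ only gives continuity with respect to the $\mcb{H}^1$-norm, so Riesz's theorem no longer yields a $\xi\in L^2$ representing $l_{\varrho,2}$ against $G$ itself, and the conclusion $\bar{\varrho}\in L^2(0,T;\mcb{H}^1(\mathbb{R}))$ in Proposition \ref{estenergsembarlenfor} collapses. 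This is precisely why the paper states, just before its proof, that one cannot simply rerun Proposition \ref{estenergstat0} with $G$ replaced by $\partial_u G$ and must instead ``rely on the dynamics of the model.''

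The missing idea is that the summation by parts has to be performed on $G$, not on $\eta$. The paper keeps the direct discretization $\sum_x\partial_u G(t,\tfrac{x}{n})\eta^n_t(x)$ of $l_{\varrho,2}(G)$ and uses the Taylor identity $\tfrac{1}{n}\partial_u G(t,\tfrac{x}{n})=G(t,\tfrac{x}{n})-G(t,\tfrac{x-1}{n})+O(n^{-2})$ for $x>0$ (mirrored for $x<0$, with deliberately no expansion at $x=0$), so that after rearranging it is $G$ itself that multiplies the increments $\eta(x)-\eta(x\pm 1)$; then the same Young scheme, with weights $A_{x,x+1}\sim|G(t,\tfrac{x}{n})|/(np(1))$, leaves $\tfrac{1}{n}\sum_x|G(t,\tfrac{x}{n})|^2\approx\|G(t,\cdot)\|^2_{2,\mathbb{R}}$, which is the correct norm. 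The price of not expanding at the origin is the boundary term $G(t,0)[\eta(-1)-\eta(1)]$, and this is where the hypothesis $\sigma^2_{\mcb S}<\sigma^2$ is used concretely: one telescopes $\eta(-1)-\eta(1)$ along the finite path $-1\to x_1\to x_2\to 1$ that uses only fast bonds, pairing each elementary increment with a fast-bond Dirichlet term. You correctly sensed that the fast ``bridge'' bond is what prevents a degeneracy at $u=0$, but without the $G$-side summation by parts and this telescoping construction your scheme cannot be completed.
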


{Before we start the proof of last result we make the following observation. At this point, the reader might be puzzled about  the similarity between the statements of  Propositions \ref{estenergstat0} and \ref{estenergdifreal}. In fact, we note that by replacing, in the  proof of the previous proposition, the function $G$ by its derivative  $\partial_u G$, then and at the end of the argument we would obtain a comparison with the $L^2$-norm of  $\partial_u G$ and not of $G$ which would not allow us to conclude the almost sure continuity of the functional $l_{\varrho,2}$.   For this reason we need to redo the proof and in this case we rely on the dynamics of the model.}
\begin{proof}
Since $\sigma_{\mcb S}^2 < \sigma^2$, there exist $x_1<0, x_2 \geq 0$ with $\{x_1, x_2 \} \in \mcb F$ and $p(x_2 - x_1) >0$; the bond $\{x_1, x_2 \}$ is a ``bridge" between \textcolor{black}{ $\mathbb{Z}_{-}^{*}$ and $\mathbb{N}$}. In particular, there exists at least one path which allows moving a particle from $-1$ to $1$ only through fast bonds: first we go from $-1$ to $x_1$, then from $x_1$ to $x_2$ and finally from $x_2$ to $1$.   We remark that below we make the choice for this path using  only jumps of size $1$ (except in the bond $\{x_1, x_2 \}$) since in this way we can also treat finite-range model. Nevertheless, other paths could be considered, as long as we use a finite number of fast bonds.  

First we  fix $G \in C_{c}^{0,\infty} ( [0, T] \times \mathbb{R} )$. By Feynman-Kac's formula, we have
\textcolor{black}{
\begin{align*}
&  \tfrac{1}{n} \log \Big( \mathbb{E}_{\nu_a} \Big[ \exp \Big\{\int_0^T \big[ \sum_{x } \partial_u G \left(t, \tfrac{x}{n} \right) \eta_t^n(x)  - \tilde{C} n \|G\|_{2,[0,T] \times \mathbb{R} }^2 \big] dt \Big\} \Big] \Big) \\
\leq &    \int_0^T  \sup_f \Big\{   \sum_{x  } \tfrac{1}{n} \partial_u G \left(t, \tfrac{x}{n} \right)  \langle \eta(x), f\rangle_{\nu_a} + n  \langle\mcb {L}_{n} \sqrt{f} , \sqrt{f} \rangle_{\nu_a}  - \tilde{C}  \|G \|_{2,[0,T] \times \mathbb{R} }^2 \Big\} dt,
\end{align*}}
where the supremum is carried over all the densities $f$ with respect to $\nu_{a}$ \textcolor{black}{and $\tilde{C}$ is a positive constant that will be chosen later}. Above, $\langle f, g\rangle_{\nu_a}$ is the scalar product between $f$ and $g$ in $L^2(\Omega, \nu_a)$, that is,
$\langle f, g\rangle _{\nu_a} := \int f(\eta) g(\eta) d \nu_a
$ 
and $\langle\mcb {L}_{n} \sqrt{f} , \sqrt{f} \rangle_{\nu_a}$ is the Dirichlet form.  
Since $G \in C_{c}^{0,\infty}\left( [0, T] \times \mathbb{R} \right)$, by a Taylor expansion, we get (neglecting terms of lower order with respect to $n$)
\begin{align*}
 \tfrac{1}{n} \partial_u G \left(t, \tfrac{x}{n} \right)  =  G \left(t, \tfrac{x}{n} \right) - G \left(t, \tfrac{x-1}{n} \right), \forall x >0; \; \;  \tfrac{1}{n} \partial_u G \left(t, \tfrac{x}{n} \right)  =  G \left(t, \tfrac{x+1}{n} \right) - G \left(t, \tfrac{x}{n} \right), \forall x <0. 
\end{align*}
 This leads to
\textcolor{black}{
\begin{align*}
&\sum_{x=1}^{\infty} \tfrac{1}{n} \partial_u G \left(t, \tfrac{x}{n} \right) \eta(x) =  - G(t,0) \eta(1) + \sum_{x=1}^{\infty}   G (t, \tfrac{x}{n} )[ \eta(x) - \eta(x+1) ]; \\
&\sum_{x=-\infty}^{-1} \tfrac{1}{n} \partial_u G \left(t, \tfrac{x}{n} \right) \eta(x) =   G(t,0) \eta(-1) + \sum_{x=-\infty}^{-1}   G (t, \tfrac{x}{n} )[ \eta(x-1) - \eta(x) ].
\end{align*}
We observe that in each sum there exist a boundary term with $G(t,0)$. Since we did not perform a Taylor expansion for $x=0$, we are able to collect both terms as $ G(t,0) [\eta(-1)- \eta(0)]$. Thus, we have
}
\textcolor{black}{
\begin{align*}
&  \tfrac{1}{n} \log \Big( \mathbb{E}_{\nu_a} \Big[ \exp \Big\{\int_0^T \big[ \sum_{x } \partial_u G \left(t, \tfrac{x}{n} \right) \eta_t^n(x)  - \tilde{C} n \|G\|_{2,[0,T] \times \mathbb{R} }^2 \big] dt \Big\} \Big] \Big) \\
&\leq    \int_0^T  \sup_f \Big\{   \sum_{x =1}^{\infty} G (t, \tfrac{x}{n} ) \int[ \eta(x) - \eta(x+1) ]  f(\eta) d \nu_a + \sum_{x = - \infty}^{-1} G (t, \tfrac{x}{n} )\int [ \eta(x-1) - \eta(x) ]  f(\eta) d \nu_a 
\\&+  G(t,0)\int [ \eta(-1) - \eta(1) ]  f(\eta) d \nu_a + n  \langle \mcb {L}_{n} \sqrt{f} , \sqrt{f} \rangle_{\nu_a}  - \tilde{C}  \|G\|_{2,[0,T] \times \mathbb{R} }^2 \Big\} dt \\
+& \int_0^T  \sup_f \Big\{    \tfrac{1}{n} \partial_u G \left(t, 0 \right)  \langle \eta(0), f\rangle_{\nu_a}  \Big\} dt,
\end{align*}
}
plus terms of lower order with respect to $n$. \textcolor{black}{We observe that the term in last line of last display goes to zero as $n \rightarrow \infty$}.A simple computation shows that 
 \begin{align}\label{bound}
\langle\mcb {L}_{n} \sqrt{f} , \sqrt{f} \rangle_{\nu_a} = - \dfrac{1}{2}  D_n (\sqrt{f}, \nu_{a} ),
\end{align}
where 
$D_n (\sqrt{f}, \nu_a ) := D_n^{\mcb F} (\sqrt{f}, \nu_a ) + D_n^{\mcb S} (\sqrt{f}, \nu_a )$,
with
\begin{align*}
D_n^{\mcb F}  (\sqrt{f}, \nu_a ) := \frac{1}{2} \sum_{\{ x, y \} \in \mcb F} p(y-x) I_{x,y}  (\sqrt{f}, \nu_a ) \quad \textrm{and} \quad 
 D_n^{\mcb S}  (\sqrt{f}, \nu_a ) :=  \frac{\alpha}{2n^{\beta}}  \sum_{\{ x, y \} \in \mcb S} p(y-x) I_{x,y}  (\sqrt{f}, \nu_a ),
\end{align*} and 
$
I_{x,y}  (\sqrt{f}, \nu_a ) := \int [ \sqrt{f \left( \eta^{x,y} \right) } - \sqrt{f \left( \eta \right) } ]^2 d \nu_a.
$
From this, we get for every $t \in [0,T]$:
\begin{align*}
&  \sum_{x =1}^{\infty} G (t, \tfrac{x}{n} ) \int [ \eta(x) - \eta(x+1) ] f(\eta) d \nu_a + \sum_{x = - \infty}^{-1} G (t, \tfrac{x}{n} )\int [ \eta(x-1) - \eta(x) ]  f(\eta) d \nu_a \\
+ &  G(t,0) \int [ \eta(-1) - \eta(1) ]  f(\eta) d \nu_a + n  \langle \mcb {L}_{n} \sqrt{f} , \sqrt{f} \rangle_{\nu_a} \\
\leq &  \sum_{x =1}^{\infty} |G \left(t, \tfrac{x}{n} \right)|  \Big| \int [ \eta(x) - \eta(x+1) ] f \left( \eta \right) d \nu_a \Big| + \sum_{x = - \infty}^{-1} |G \left(t, \tfrac{x}{n} \right)|  \Big| \int [ \eta(x) - \eta(x-1) ] f \left( \eta \right) d \nu_a \Big| \\
+ &  |G(t,0)| \Big| \int [ \eta(-1) - \eta(1) ]  f(\eta) d \nu_a \Big| -   \dfrac{n}{2}  D_n^{\mcb F} (\sqrt{f}, \nu_{a} ).
\end{align*} 
Through the bond $\{x_1,x_2\}$, we can go from $-1$ to $1$ only from through fast bonds following the path described in the beginning of the proof. This motivates us to write
\begin{align*}
&\Big| \int [ \eta(-1) - \eta(1) ]  f(\eta) d \nu_a \Big| \leq \Big| \int [ \eta(x_1) - \eta(x_2) ]  f(\eta) d \nu_a \Big| \\
 + & \sum_{j=x_1+1}^{-1} \Big| \int [ \eta(j) - \eta(j-1) ]  f(\eta) d \nu_a \Big| +   \sum_{j=1}^{x_2-1} \Big| \int [ \eta(j+1) - \eta(j) ]  f(\eta) d \nu_a \Big|.
\end{align*}
\textcolor{black}{Since a fixed fast bond is counted twice in $\sum_{\{y, z \} \in \mcb F}$, it holds}
\begin{align*}
&-\dfrac{n}{2}  D_n^{\mcb F}  (\sqrt{f}, \nu_{a} ) =  -\frac{n}{4}  \sum_{\{y, z \} \in \mcb F} p(z-y) I_{y,z}  (\sqrt{f}, \mu ) \\
\leq& -   \sum_{x =1}^{\infty} \frac{np(1) I_{x,x+1}   (\sqrt{f}, \nu_{a} )}{4}   -  \sum_{x = - \infty}^{-1} \frac{n p(-1) I_{x,x-1}   (\sqrt{f}, \nu_{a} )}{4}     \\
-&\frac{n}{4} p( x_2 - x_1 )  I_{x_1,x_2}  (\sqrt{f}, \nu_{a} )-  \sum_{j=x_1+1}^{-1}  \frac{np(-1) I_{j,j-1}   (\sqrt{f}, \nu_{a} )}{4} -  \sum_{j=1}^{x_2-1}  \frac{np(-1) I_{j+1,j}   (\sqrt{f}, \nu_{a} )}{4} .
\end{align*}
Now observe that from a change of variables $\eta $ to $\eta^{x,y}$ and applying Young's inequality we see that there exists a positive constant $A_{x,y}$  such that
\begin{align} \label{young}
\Big| \int [\eta(x) - \eta(y)] f(\eta) d \nu_a \Big| \leq  \frac{ I_{x,y}  (\sqrt{f}, \nu_a )}{2A_{x,y}} + 2 A_{x,y}.
\end{align}
To be precise we make the choice 
\begin{align*}
& A_{x,x+1}:= \frac{2|G \left(t, \tfrac{x}{n} \right)|}{n p(1)}, \forall x \in \mathbb{Z}_{+}^{*};\quad  A_{x,x-1}:= \frac{2|G \left(t, \tfrac{x}{n} \right)|}{n p(-1)},    \forall x \in \mathbb{Z}_{+}^{*}; \quad A_{x_1,x_2}:= \frac{2|G (t, 0)|}{n p(x_2-x_1)};\\
& A_{j,j-1}:= \frac{2|G (t, 0)|}{n p(-1)}, \forall j \in \{x_1+1, \ldots, -1\}; A_{j+1,j}:= \frac{2|G (t, 0)|}{n p(-1)}, \forall j \in \{1, \ldots, x_2-1\}.
\end{align*}
\textcolor{black}{
Therefore, by choosing $\tilde{C}:=4 \max\{ [p(1)]^{-1}, [p(x_2-x_1)]^{-1} +(x_2-x_1-2)[p(1)]^{-1}\}>0$, after an iterated application of last inequality and taking the $\limsup$ when $n \rightarrow \infty$ we get that for every $G \in C_{c}^{0,\infty} ( [0,T] \times \mathbb{R} )$
\begin{align} \label{EGnpos1}
& \limsup_{n \rightarrow \infty}   \tfrac{1}{n} \log \Big( \mathbb{E}_{\nu_a} \Big[ \exp \Big\{\int_0^T \big[ \sum_{x } \partial_u G \left(t, \tfrac{x}{n} \right) \eta_t^n(x)  - \tilde{C} n \|G\|_{2,[0,T] \times \mathbb{R} }^2 \big] dt \Big\} \Big] \Big)  \leq 0.
\end{align}
}
Now we choose a sequence $\{G_m\}_{m \geq 1}$on $C_{c}^{0,\infty} \left( [0,T] \times \mathbb{R} \right)$. Fix $m_0 \geq 1$ and define $\Phi: \mcb {D}([0,T], \mcb{M}^+) \rightarrow \mathbb{R}$ by
\textcolor{black}{
\begin{align*}
\Phi (\pi_{\cdot}) := \max_{k \leq m_0} \Big\{  \int_0^T \big[ \int_{\mathbb{R}} \partial_u G_k(t,u) d \pi(t,u)  - \tilde{C} \|G_k\|_{2,[0,T] \times \mathbb{R} }^2\big] dt \Big\} ,
\end{align*}
}
which is a continuous and bounded function for the Skorohod topology of $ \mcb {D}([0,T], \mcb{M}^+)$. Thus we have
\textcolor{black}{
\begin{align*}
\mathbb{E}_{\mathbb{Q}} \left[ \Phi \right] = \lim_{n \rightarrow \infty} \mathbb{E}_{\mu_n} \Big[  \max_{k \leq m_0} \Big\lbrace \int_0^T \big[ \frac{1}{n}  \sum_{x }  \partial_u G_k (t, \tfrac{x}{n} ) \eta_t^n(x)  - \tilde{C} \|G_k\|_{2,[0,T] \times \mathbb{R} }^2 \big] dt  \Big\rbrace \Big].
\end{align*}
}
By the entropy inequality, Jensen's inequality and  $e^{\max_{k \leq m} a_k} \leq \sum_{k=1}^m e^{a_k}$, we have
\textcolor{black}{
\begin{align*}
& \mathbb{E}_{\mu_n} \Big[  \max_{k \leq m_0} \Big\lbrace \int_0^T \big[ \frac{1}{n} \sum_{x }  \partial_u G_k (t, \tfrac{x}{n} ) \eta_t^n(x)  - \tilde{C} \|G_k\|_{2,[0,T] \times \mathbb{R} }^2 \big] dt \Big\rbrace \Big] \\
\leq & C_a + \frac{1}{n} \log   \Big( \sum_{k=1}^{m_0} \mathbb{E}_{\nu_a} \Big[   \exp \Big\{ \int_0^T \big[ \sum_{x }  \partial_u G_k (t, \tfrac{x}{n} ) \eta_t^n(x)  - \tilde{C} n \|G_k\|_{2,[0,T] \times \mathbb{R} }^2\big] dt \Big\}  \Big]  \Big).
\end{align*}
}
Since 
\begin{equation} \label{largedev}
\limsup_{n \rightarrow \infty} \dfrac{1}{n} \log (a_n + b_n ) = \max \left\lbrace \limsup_{n \rightarrow \infty} \dfrac{1}{n} \log (a_n  ), \; \limsup_{n \rightarrow \infty} \dfrac{1}{n} \log ( b_n ) \right\rbrace,
\end{equation}
from \eqref{EGnpos1}, we get 
\textcolor{black}{
\begin{align*}
& \mathbb{E}_{\mathbb{Q}} \left[ \Phi \right] = \lim_{n \rightarrow \infty} \mathbb{E}_{\mu_n} \Big[  \max_{k \leq m_0} \Big\lbrace \int_0^T \big[ \frac{1}{n} \sum_{x }  \partial_u G_k (t, \tfrac{x}{n} ) \eta_t^n(x)  - \tilde{C} \|G_k\|_{2,[0,T] \times \mathbb{R} }^2 \big] dt \Big\rbrace \Big]\leq C_a. 
\end{align*}
}
Therefore, by density and by the Monotone Convergence Theorem, 
\textcolor{black}{
\begin{align*}
\mathbb{E}_{\mathbb{Q}} \big[ \sup_{G} \{ l_{\varrho,2}(G) - \tilde{C} T   \|G \|_{2,[0,T] \times \mathbb{R} }^2 \} \big] \leq C_a < \infty,
\end{align*}
}
where the supremum above is taken on the set $C_{c}^{0, \infty} ( [0, T] \times \mathbb{R} )$. This leads to the desired result.
\end{proof}
Now we present the proof of Proposition \ref{estenergsembarlenfor}.
\begin{proof}[Proof of Proposition \ref{estenergsembarlenfor}]
We observe that $C_{c}^{0, \infty} \big( [0, T] \times \mathbb{R}\big )$ is dense in $L^2 \big( [0, T] \times \mathbb{R}\big )$. From Proposition \ref{estenergdifreal}, and Riesz's Representation Theorem we find $\xi \in L^2 \big( [0,T] \times \mathbb{R} \big)$ such that
\begin{equation} \label{riesz}
l_{ \varrho,2 }(G)= \int_0^T \int_{\mathbb{R}} \partial_u G(t,u)  \bar{\varrho}(t,u)  du dt =  \int_0^T \int_{\mathbb{R}} G(t,u) \xi(s,u) du ds, 
\end{equation}
for every $G \in  L^2 \big( [0,T] \times \mathbb{R} \big)$. Now fix $\phi \in C_c^{\infty}(\mathbb{R})$. For every $t \in [0,T]$, define $G_t: [0,T] \times \mathbb{R} \in L^2 \big( [0,T] \times \mathbb{R} \big)$ by 
$G_t(s,u) =\phi(u)\mathbbm{1}_{ (s,u) \in [0,t] \times \mathbb{R}}$. Therefore from \eqref{riesz}, for every $t \in [0,T]$ it holds
\begin{align*}
\int_0^t \Big\{ \int_{\mathbb{R}} \phi'(u)  \bar{\varrho}(t,u)  du +  \int_{\mathbb{R}} \phi(u) \big(-  \xi(t,u) \big) du \Big\} ds=0, \forall t \in [0,T].
\end{align*}
As a consequece, for almost every $s$ on $[0,T]$ we have 
\begin{align*}
\int_{\mathbb{R}} \phi'(u)  \bar{\varrho}(s,u)  du =-  \int_{\mathbb{R}} \phi(u) \big(-  \xi(s,u) \big) du.
\end{align*}
Note that  $\xi \in L^2 \big( [0,T] \times \mathbb{R} \big)$. From  \ref{estenergstat}, we obtain  $\bar{\varrho}(s, \cdot) \in L^2(\mathbb{R})$, for almost every $s\in[0,T]$. Since $\phi \in C_c^{\infty}(\mathbb{R})$ is arbitrary, it follows that $\bar{\varrho} (s, \cdot) \in \mcb{H}^1(\mathbb{R})$ and $- \xi(s, \cdot) = \partial_u \bar{\varrho}(s, \cdot)$ for almost every $s$ on $[0,T]$, which implies  $\bar{\varrho} \in L^2 \big(0, T ; \mcb{H}^1 (\mathbb{R} )  \big)$.
\end{proof}

\subsection{Energy estimates with a slow barrier}

In this section, we assume that $\sigma_{\mcb S}^2 = \sigma^2$. Then given two sites $y_1 \in \mathbb{N}, y_2 \in \mathbb{Z}_{-}^{*}$, it is not possible to move a particle from $y_1$ to $y_2$ choosing only fast bonds, which was the crucial argument used before. However, the movement inside $\mathbb{N}$ or $ \mathbb{Z}_{-}^{*}$ without using  slow bonds is always possible. Then the same argument used in the proof of Proposition \ref{estenergsembarlenfor} leads to the next result.

\begin{prop} \label{estenerbarfor}
Assume $\sigma_{\mcb S}^2 = \sigma^2$. Then
\begin{align*}
\mathbb{Q} \Big( \pi_{ \cdot} \in \mcb D \big([0,T], \mcb{M}^+(\mathbb{R}) \big): \bar{\varrho}|_{[0,T] \times \mathbb{R}_{+}^{*}} \in L^2 \big(0, T ; \mcb{H}^1( \mathbb{R}_{+}^{*})  \big) \Big) = 1,\\
\mathbb{Q} \Big( \pi_{ \cdot} \in \mcb D \big([0,T], \mcb{M}^+(\mathbb{R}) \big): \bar{\varrho}|_{[0,T] \times \mathbb{R}_{-}^{*}} \in L^2 \big(0, T ; \mcb{H}^1( \mathbb{R}_{-}^{*})  \big) \Big) = 1.
\end{align*}
\end{prop}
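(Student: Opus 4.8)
The plan is to mirror the proof of Proposition~\ref{estenergsembarlenfor}, treating the two half-lines separately and exploiting the fact that, although no fast bond joins $\mathbb{Z}_{-}^{*}$ to $\mathbb{N}$ when $\sigma_{\mcb S}^2=\sigma^2$, every nearest-neighbour bond $\{x,x+1\}$ with both endpoints in $\mathbb{N}$ (resp. both in $\mathbb{Z}_{-}^{*}$) is still fast, since $\mcb S\subset\mcb S_0$ consists only of bonds joining $\mathbb{Z}_{-}^{*}$ to $\mathbb{N}$. By the symmetry $u\mapsto -u$ it suffices to establish the first assertion. First I would introduce the linear functional
\begin{align*}
l_{\varrho,2}^{+}(G) := \int_0^T \int_{\mathbb{R}_{+}^{*}} \partial_u G(t,u)\, \bar{\varrho}(t,u)\, du\, dt
\end{align*}
defined on $C_c^{0,\infty}\big([0,T]\times\mathbb{R}_{+}^{*}\big)$, and aim to show that it is $\mathbb{Q}$-almost surely continuous for the $L^2\big([0,T]\times\mathbb{R}_{+}^{*}\big)$-norm.

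The continuity would be proved exactly as in Proposition~\ref{estenergdifreal}, via the Feynman--Kac formula and the entropy inequality. The crucial simplification is that any $G\in C_c^{0,\infty}\big([0,T]\times\mathbb{R}_{+}^{*}\big)$ has support bounded away from the origin, so there is $\delta>0$ with $G(t,u)=0$ for $u<\delta$; after the discrete integration by parts the boundary term at the origin disappears and one is left only with bulk contributions $G(t,\tfrac{x}{n})[\eta(x)-\eta(x+1)]$ for $x\geq\delta n$. Each of these is controlled, through the change of variables $\eta\mapsto\eta^{x,x+1}$ and Young's inequality~\eqref{young}, by the Dirichlet form $I_{x,x+1}(\sqrt{f},\nu_a)$ of the \emph{fast} bond $\{x,x+1\}\subset\mathbb{N}$, which is absorbed by $D_n^{\mcb F}(\sqrt{f},\nu_a)$ through \eqref{bound}. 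Since one never needs to cross the origin, no \emph{bridge} bond is required and the constant $\tilde C$ depends only on $p(1)$. Following the same steps — the $\limsup$ bound analogous to \eqref{EGnpos1}, the entropy inequality applied to a finite family $\{G_k\}_{k\le m_0}$, identity \eqref{largedev}, and the Monotone Convergence Theorem — yields
\begin{align*}
\mathbb{E}_{\mathbb{Q}} \big[ \sup_{G} \{ l_{\varrho,2}^{+}(G) - \tilde C\, T\, \|G\|_{2,[0,T]\times\mathbb{R}_{+}^{*}}^2 \} \big] \le C_a < \infty,
\end{align*}
the supremum being over $C_c^{0,\infty}\big([0,T]\times\mathbb{R}_{+}^{*}\big)$, which gives the desired continuity.

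To conclude, I would argue as in the proof of Proposition~\ref{estenergsembarlenfor}. Using the density of $C_c^{0,\infty}\big([0,T]\times\mathbb{R}_{+}^{*}\big)$ in $L^2\big([0,T]\times\mathbb{R}_{+}^{*}\big)$ together with Riesz's Representation Theorem, there is $\xi^{+}\in L^2\big([0,T]\times\mathbb{R}_{+}^{*}\big)$ with $l_{\varrho,2}^{+}(G)=\int_0^T\int_{\mathbb{R}_{+}^{*}} G\,\xi^{+}$ for all such $G$. Testing with $G_t(s,u)=\phi(u)\mathbbm{1}_{(s,u)\in[0,t]\times\mathbb{R}_{+}^{*}}$ for $\phi\in C_c^{\infty}(\mathbb{R}_{+}^{*})$ and differentiating in $t$ shows that, for almost every $s$, one has $\int_{\mathbb{R}_{+}^{*}}\phi'\,\bar{\varrho}(s,\cdot)=\int_{\mathbb{R}_{+}^{*}}\phi\,\xi^{+}(s,\cdot)$, so that $-\xi^{+}(s,\cdot)$ is the weak derivative of $\bar{\varrho}(s,\cdot)|_{\mathbb{R}_{+}^{*}}$. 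Combined with $\bar{\varrho}(s,\cdot)|_{\mathbb{R}_{+}^{*}}\in L^2(\mathbb{R}_{+}^{*})$ for a.e.\ $s$, which follows by restricting \eqref{estenergstat} to $\mathbb{R}_{+}^{*}$, this gives $\bar{\varrho}(s,\cdot)|_{\mathbb{R}_{+}^{*}}\in\mcb{H}^1(\mathbb{R}_{+}^{*})$ for a.e.\ $s$ and hence $\bar{\varrho}|_{[0,T]\times\mathbb{R}_{+}^{*}}\in L^2\big(0,T;\mcb{H}^1(\mathbb{R}_{+}^{*})\big)$.

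The one point that genuinely requires care — and the only place where the hypothesis $\sigma_{\mcb S}^2=\sigma^2$ enters — is verifying that the support of the admissible test functions being bounded away from the origin removes every dependence on bonds crossing $0$: one must check that all the nearest-neighbour bonds entering the Dirichlet-form bound have both endpoints in $\mathbb{N}$ and are therefore fast. This is precisely what allows the proof of Proposition~\ref{estenergdifreal} to go through \emph{without} the bridge $\{x_1,x_2\}$; everything else is a direct transcription of the earlier argument, and the negative half-line is handled identically after reflection.
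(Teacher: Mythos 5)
Your proposal is correct and follows essentially the same route as the paper: the paper's own proof simply observes that, inside each half-line, particles move using only fast bonds, and then invokes verbatim the argument of Proposition \ref{estenergsembarlenfor} (via Proposition \ref{estenergdifreal}), which is exactly what you carry out in detail. Your observation that test functions in $C_c^{0,\infty}\big([0,T]\times\mathbb{R}_{+}^{*}\big)$ vanish near the origin, so that the boundary term $G(t,0)[\eta(-1)-\eta(1)]$ disappears and no bridge bond is needed, is precisely the point the paper leaves implicit.
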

\textcolor{black}{A consequence of Lemma \ref{partreplemma} is the following}:
\begin{prop} \label{dircond}
Assume $\beta \in [0, 1)$. If $\mathbb{Q}$ is a limit point of the sequence $(\mathbb{Q}_n)_{n  \geq 1}$, then
\begin{align*}
\mathbb{Q} \Big( \pi_{ \cdot} \in \mcb D \big([0,T], \mcb{M}^+(\mathbb{R}) \big): \int_0^t [\varrho(s,0^{+}) - \varrho(s,0^{-})] ds = 0, \forall t \in [0,T]  \Big) = 1.
\end{align*}
\end{prop}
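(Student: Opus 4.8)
The plan is to mirror the scheme of the characterization proofs (Propositions \ref{caraclimsembarlen} and \ref{caraclimcombarlen}): replace the continuum boundary quantity by its mesoscopic counterpart and then invoke Lemma \ref{partreplemma}. Since the map $t \mapsto \int_0^t[\varrho(s,0^+) - \varrho(s,0^-)]\,ds$ is continuous, the event in the statement coincides with $\{\sup_{0\le t \le T}|\int_0^t[\varrho(s,0^+) - \varrho(s,0^-)]\,ds| = 0\}$, so it is enough to show that for every $\delta>0$
\begin{align*}
\mathbb{Q}\Big(\sup_{0\le t\le T}\Big|\int_0^t[\varrho(s,0^+) - \varrho(s,0^-)]\,ds\Big| > \delta\Big) = 0,
\end{align*}
and then let $\delta\downarrow 0$ along a sequence. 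Because $\sigma_{\mcb S}^2 = \sigma^2$ throughout this subsection, Proposition \ref{estenerbarfor} guarantees that $\bar\varrho(s,\cdot)$ lies in $\mcb{H}^1(\mathbb{R}_+^*)$ and $\mcb{H}^1(\mathbb{R}_-^*)$ for a.e.\ $s$, so the lateral traces $\varrho(s,0^+)$ and $\varrho(s,0^-)$ are $\mathbb{Q}$-almost surely well defined, and the event above is measurable.

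First I would replace the traces by the mesoscopic averages $\langle\pi_s,\iota_\varepsilon^{0^+}\rangle$ and $\langle\pi_s,\iota_\varepsilon^{0^-}\rangle$, splitting the probability into three pieces exactly as in \eqref{f1term3dif}--\eqref{f1term4dif}. The two error terms
\begin{align*}
\mathbb{Q}\Big(\sup_{0\le t\le T}\Big|\int_0^t[\varrho(s,0^{+}) - \langle\pi_s,\iota_\varepsilon^{0^{+}}\rangle]\,ds\Big| > \tfrac{\delta}{4}\Big), \qquad
\mathbb{Q}\Big(\sup_{0\le t\le T}\Big|\int_0^t[\varrho(s,0^{-}) - \langle\pi_s,\iota_\varepsilon^{0^{-}}\rangle]\,ds\Big| > \tfrac{\delta}{4}\Big)
\end{align*}
vanish as $\varepsilon\to 0^+$: indeed $\langle\pi_s,\iota_\varepsilon^{0^{\pm}}\rangle = \tfrac1\varepsilon\int_{0^\pm} \varrho(s,u)\,du$ converges to $\varrho(s,0^{\pm})$ for a.e.\ $s$ by the very definition of the trace together with the regularity from Proposition \ref{estenerbarfor}, and the integrands are bounded by $1$, so dominated convergence disposes of the time integral uniformly in $t$. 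It then remains to treat the single term carrying $\langle\pi_s,\iota_\varepsilon^{0^+}\rangle - \langle\pi_s,\iota_\varepsilon^{0^-}\rangle$.

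Since $\iota_\varepsilon^{0^\pm}$ are not continuous, I would approximate them by continuous functions so that the (lower semicontinuity) direction of Portmanteau's theorem applies, picking up an error that disappears as $\varepsilon\to 0^+$; recognising $\langle\pi_s^n,\iota_\varepsilon^{0^+}\rangle = \eta_s^{\rightarrow n\varepsilon}(0)$ and $\langle\pi_s^n,\iota_\varepsilon^{0^-}\rangle = \eta_s^{\leftarrow n\varepsilon}(0)$ as in \eqref{medemp}, this bounds the remaining probability from above by
\begin{align*}
\liminf_{n\to\infty}\mathbb{P}_{\mu_n}\Big(\sup_{0\le t\le T}\Big|\int_0^t[\eta_s^{\rightarrow n\varepsilon}(0) - \eta_s^{\leftarrow n\varepsilon}(0)]\,ds\Big| > \tfrac{\delta}{8}\Big).
\end{align*}
By Markov's inequality this is controlled by $\mathbb{E}_{\mu_n}\big[\sup_{0\le t\le T}|\int_0^t[\eta_s^{\rightarrow n\varepsilon}(0) - \eta_s^{\leftarrow n\varepsilon}(0)]\,ds|\big]$, which vanishes in the iterated limit $n\to\infty$ and then $\varepsilon\to 0^+$ precisely by Lemma \ref{partreplemma} in the regime $\beta\in[0,1)$. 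Taking $\varepsilon\to 0^+$ and then $\delta\downarrow 0$ completes the argument.

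The genuinely hard step is Lemma \ref{partreplemma} itself, which is imported here: it is where the hypothesis $\beta<1$ enters, through the fact that the rescaled crossing rate $\alpha n^{1-\beta}$ diverges and thereby forces the occupation densities on the two sides of the origin to coincide in the time-averaged sense. Everything above is a soft measure-theoretic bridge between that microscopic replacement and the continuum traces $\varrho(s,0^\pm)$; the only delicate points are the well-posedness of those traces, which is furnished by Proposition \ref{estenerbarfor}, and the passage through Portmanteau via continuous approximations of the discontinuous kernels $\iota_\varepsilon^{0^\pm}$.
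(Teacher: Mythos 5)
Your proposal is correct and follows essentially the same route as the paper's proof: reduction to the probability that $\sup_{0\le t\le T}\big|\int_0^t[\varrho(s,0^+)-\varrho(s,0^-)]\,ds\big|$ exceeds $\delta$, well-posedness of the lateral traces via Proposition \ref{estenerbarfor} and Lebesgue differentiation, replacement of the traces by the mesoscopic averages $\langle\pi_s,\iota_{\varepsilon}^{0^+}\rangle$ and $\langle\pi_s,\iota_{\varepsilon}^{0^-}\rangle$, Portmanteau's theorem after smoothing the discontinuous kernels, Markov's inequality, and finally Lemma \ref{partreplemma}, which is where $\beta<1$ enters. The only imprecision --- invoking Lemma \ref{partreplemma}, which is stated for each fixed $t$, to control an expectation involving a supremum over $t\in[0,T]$ --- is shared with (indeed, is glossed over even more silently in) the paper's own proof, and is easily repaired since the integrand is bounded.
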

\begin{proof}
In order to prove the proposition, it is enough to verify, for any $\delta >0$
\begin{align*}
\mathbb{Q} \Big(\pi_{ \cdot} \in \mcb D([0,T], \mcb{M}^+(\mathbb{R})): \sup_{0 \leq t \leq T} \Big| \int_0^t [\varrho(s,0^{+}) - \varrho(s,0^{-})] ds \Big| > \delta  \Big) = 0.
\end{align*}
For almost every $s \in [0,T]$ we have $\bar{\varrho}(s,u) \in \mcb{H}^1(\mathbb{R}^{*})$ and $\varrho=\bar{\varrho}+a$ is such that $\varrho(s,\cdot)|_{\mathbb{R}_{-}^{*}}$ and $\varrho(s,\cdot)|_{\mathbb{R}_{+}}$ have continuous representatives on $(-\infty,0]$ and $[0, \infty)$, respectively. Then Lebesgue's Differentiation Theorem leads to 
\begin{align*}
\varrho(s,0^{+}) = \lim_{\varepsilon \rightarrow 0^{+}} \frac{1}{\varepsilon} \int_{0}^{\varepsilon} {\varrho}(s,u) du \quad\textrm{and }\quad  \varrho(s,0^{-}) = \lim_{\varepsilon \rightarrow 0^{-}} \frac{1}{\varepsilon} \int_{-\varepsilon}^{0} {\varrho}(s,u) du, 
\end{align*}
for almost every $s \in [0,T]$.  From the previous identities, together with Portmanteau's theorem (after replacing $\iota_\varepsilon^{0^+}$ and $\iota_\varepsilon^{0^-}$ by continuous functions as we did in the proof of Proposition \ref{caraclimsembarlen}) and then Markov's inequality, we have that
\begin{align*}
 \mathbb{Q} \Big(\pi_{ \cdot} \in \mcb D([0,T], \mcb{M}^+(\mathbb{R})):& \sup_{0 \leq t \leq T} \Big| \int_0^t [\varrho(s,0^{+}) - \varrho(s,0^{-})] ds \Big| > \delta  \Big) \\
\leq & \limsup_{\varepsilon \rightarrow 0^+} \limsup_{n \rightarrow \infty} \delta^{-1} \mathbb{E}_{\mu_n} \Big[ \Big| \int_0^T [ \eta_s^{\rightarrow  \varepsilon n}(0) -  \eta_s^{\leftarrow\varepsilon n }(0))] ds \Big|   \Big]. 
\end{align*}
Define $F:=[0,T] \rightarrow \mathbb{R}$ by $F(s)=1, \forall s \in [0,T]$ and $\theta: \mathbb{Z} \rightarrow \mathbb{R}$ by $\theta(z)= 2 p(z), \forall z \in \mathbb{Z}$. Then $F \in L^{\infty}([0,T])$ and $\theta \in L^1(\mathbb{Z})$. \textcolor{black}{From Lemma \ref{partreplemma}, the double limit} above is zero and we have the desired result.
\end{proof} 
Finally, we  state the last result of this section.
\begin{cor} \label{estenergcombarlenfor}
Assume $\sigma_{\mcb S}^2 = \sigma^2$. Then
\begin{align*}
\mathbb{Q} \Big( \pi_{ \cdot} \in D \big([0,T], \mcb{M}^+(\mathbb{R}) \big): \bar{\varrho} \in L^2 \big(0, T ; \mcb{H}^1( \mathbb{R}^{*})  \big) \Big) = 1.
\end{align*}
Moreover, if $\beta \in [0,1)$, we have
\begin{align*}
\mathbb{Q} \Big( \pi_{ \cdot} \in D \big([0,T], \mcb{M}^+(\mathbb{R}) \big): \bar{\varrho} \in L^2 \big(0, T ; \mcb{H}^1( \mathbb{R})  \big) \Big) = 1.
\end{align*}
\end{cor}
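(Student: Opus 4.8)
The plan is to deduce the first assertion directly from Proposition \ref{estenerbarfor}, and to obtain the second, for $\beta\in[0,1)$, by upgrading the $\mcb{H}^1(\mathbb{R}^*)$-regularity to $\mcb{H}^1(\mathbb{R})$-regularity with the help of the no-jump information provided by Proposition \ref{dircond}.

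For the first part, recall that by definition $f\in\mcb{H}^1(\mathbb{R}^*)$ exactly when $f|_{\mathbb{R}_-^*}\in\mcb{H}^1(\mathbb{R}_-^*)$ and $f|_{\mathbb{R}_+^*}\in\mcb{H}^1(\mathbb{R}_+^*)$. Proposition \ref{estenerbarfor} guarantees, $\mathbb{Q}$-almost surely, that $\bar\varrho|_{[0,T]\times\mathbb{R}_+^*}\in L^2(0,T;\mcb{H}^1(\mathbb{R}_+^*))$ and $\bar\varrho|_{[0,T]\times\mathbb{R}_-^*}\in L^2(0,T;\mcb{H}^1(\mathbb{R}_-^*))$. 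Intersecting these two full-measure events, for almost every $s\in[0,T]$ both restrictions $\bar\varrho(s,\cdot)|_{\mathbb{R}_\pm^*}$ lie in the respective half-line Sobolev spaces, so $\bar\varrho(s,\cdot)\in\mcb{H}^1(\mathbb{R}^*)$ for a.e.\ $s$, which is precisely the (abuse-of-notation) meaning of $\bar\varrho\in L^2(0,T;\mcb{H}^1(\mathbb{R}^*))$. This settles the first claim.

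For the second part, I fix $\beta\in[0,1)$ and work on the intersection of the full-measure events of Propositions \ref{estenerbarfor} and \ref{dircond}. Since $\varrho\in[0,1]$, the map $t\mapsto\int_0^t[\varrho(s,0^+)-\varrho(s,0^-)]\,ds$ is absolutely continuous and, by Proposition \ref{dircond}, identically zero; Lebesgue's differentiation theorem then gives $\varrho(s,0^+)=\varrho(s,0^-)$, hence $\bar\varrho(s,0^+)=\bar\varrho(s,0^-)$, for almost every $s\in[0,T]$. For such $s$, the continuous representatives of $\bar\varrho(s,\cdot)$ on $(-\infty,0]$ and $[0,\infty)$ coincide at the origin. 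The crux is then a gluing step: for $\phi\in C_c^\infty(\mathbb{R})$, integrating by parts on each half-line (using that an $\mcb{H}^1$ function on an interval is absolutely continuous and that $\phi$ has compact support) yields
\begin{align*}
\int_{\mathbb{R}}\bar\varrho(s,u)\,\phi'(u)\,du = [\bar\varrho(s,0^-)-\bar\varrho(s,0^+)]\,\phi(0) - \int_{\mathbb{R}} g(s,u)\,\phi(u)\,du,
\end{align*}
where $g(s,\cdot)$ denotes the function equal to the weak derivative of $\bar\varrho(s,\cdot)$ on each of $\mathbb{R}_-^*$ and $\mathbb{R}_+^*$. The matching of traces kills the boundary term, so $g(s,\cdot)$ is the weak derivative of $\bar\varrho(s,\cdot)$ on all of $\mathbb{R}$; since $\bar\varrho(s,\cdot)$ and $g(s,\cdot)$ lie in $L^2(\mathbb{R})$ (their squared $L^2$-norms split as a sum over $\mathbb{R}_-^*$ and $\mathbb{R}_+^*$, finite by Proposition \ref{estenerbarfor}), we get $\bar\varrho(s,\cdot)\in\mcb{H}^1(\mathbb{R})$ for a.e.\ $s$.

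Finally, because the weak derivatives on $\mathbb{R}$ and on $\mathbb{R}_\pm^*$ agree almost everywhere, we have $\|\bar\varrho(s,\cdot)\|^2_{\mcb{H}^1(\mathbb{R})}=\|\bar\varrho(s,\cdot)\|^2_{\mcb{H}^1(\mathbb{R}_-^*)}+\|\bar\varrho(s,\cdot)\|^2_{\mcb{H}^1(\mathbb{R}_+^*)}$ for a.e.\ $s$, and integrating in time shows $\int_0^T\|\bar\varrho(s,\cdot)\|^2_{\mcb{H}^1(\mathbb{R})}\,ds<\infty$ by Proposition \ref{estenerbarfor}; hence $\bar\varrho\in L^2(0,T;\mcb{H}^1(\mathbb{R}))$, $\mathbb{Q}$-almost surely. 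I expect the gluing step to be the only substantive point: regularity across the origin is not automatic from half-line regularity and hinges entirely on the cancellation of the jump term, which is exactly the content of Proposition \ref{dircond} and the reason the statement is restricted to $\beta<1$.
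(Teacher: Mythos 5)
Your proof is correct and follows essentially the same route as the paper: the first claim is read off from Proposition \ref{estenerbarfor} and the definition of $L^2\big(0,T;\mcb{H}^1(\mathbb{R}^*)\big)$, and the second combines Proposition \ref{dircond} with a gluing of the two half-line Sobolev functions across the origin. The only difference is that the paper packages your gluing step (integration by parts on each half-line, cancellation of the boundary term by the matched traces) as a standalone lemma, Proposition \ref{H1reta}, which it simply cites, whereas you re-derive it inline.
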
 
\begin{proof}
The first statement is a direct consequence of the definition of $L^2 \big(0, T ; \mcb{H}^1( \mathbb{R}^{*})  \big)$ and Proposition \ref{estenerbarfor}. Now if $\beta \in [0,1)$ we can apply Proposition \ref{dircond} and get
\begin{align*}
\mathbb{Q} \Big( \pi_{ \cdot} \in \mcb D \big([0,T], \mcb{M}^+(\mathbb{R}) \big): \int_0^t [\varrho(s,0^{+}) - \varrho(s,0^{-})] ds = 0, \forall t \in [0,T]  \Big) = 1.
\end{align*}
From last identity, $\bar{\varrho}(s,0^{-})= \bar{\varrho}(s,0^{+})$, for almost every $s \in [0,T]$, $\mathbb{Q}$-almost surely. From Proposition \ref{H1reta}, $\bar\varrho(s, \cdot) \in \mcb{H}^1(\mathbb{R})$, for almost every $s \in [0,T]$, which leads to the second statement.  
\end{proof}

\section{Useful $L^1(\mathbb{P}_{\mu_n})$ estimates} \label{secheurwithout}

 In this section, we show some convergences in $L^1(\mathbb{P}_{\mu_n}$) that were used along the article. First we recall \eqref{medemp}. 
 
 \subsection{Replacement Lemmas} \label{secreplemma}
In this subsection, we prove Lemma \ref{partreplemma} and Lemma \ref{replemma}.  Hereinafter, we fix $C_a >0$ such that $H( \mu_n | \nu_a) \leq  C_a n, \forall n \geq 1$.

\begin{lem} \textbf{(Replacement Lemma at the origin)}  \label{partreplemma}
Let $F \in L^{\infty}([0,T])$. For every $\beta \geq 0$, it holds
\begin{equation} \label{rlpright} 
\limsup_{\varepsilon \rightarrow 0^+} \limsup_{n \rightarrow \infty} \mathbb{E}_{\mu_n} \Big[ \Big| \int_0^t F(s)   [ \eta_s^{\rightarrow\varepsilon n }(0) - \eta_{s}^{n}(0)] ds \Big|  \Big] = 0, \forall t \in [0,T].
\end{equation}
Moreover, if $\beta<1$, we also have
\begin{equation} \label{rlpleft} 
\limsup_{\varepsilon \rightarrow 0^+} \limsup_{n \rightarrow \infty} \mathbb{E}_{\mu_n} \Big[ \Big| \int_0^t F(s)  [ \eta_s^{\leftarrow\varepsilon n }(0) - \eta_{s}^{n}(0)] ds \Big|  \Big] = 0, \forall t \geq 0.
\end{equation}
\end{lem}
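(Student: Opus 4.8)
The plan is to follow the classical entropy--Feynman-Kac scheme already used in the proof of Proposition \ref{estenergdifreal}, the only new ingredient being a careful count of how many slow bonds a telescoping path is forced to cross. Fix $t\in[0,T]$ and abbreviate $W_n:=\int_0^t F(s)[\eta_s^{\rightarrow\varepsilon n}(0)-\eta_s^n(0)]\,ds$. Applying the entropy inequality to the path measures (whose relative entropy coincides with $H(\mu_n|\nu_a)\leq C_a n$), for every $B>0$ one has
\begin{align*}
\mathbb{E}_{\mu_n}[|W_n|]\leq \frac{C_a}{B}+\frac{1}{Bn}\log\mathbb{E}_{\nu_a}\Big[\exp\big\{Bn|W_n|\big\}\Big].
\end{align*}
Using $e^{|x|}\leq e^{x}+e^{-x}$ together with \eqref{largedev}, it suffices to bound the two terms obtained after dropping the absolute value; by the Feynman-Kac formula applied to the time-dependent potential and by \eqref{bound}, each of them is at most
\begin{align*}
\frac{1}{Bn}\int_0^t \sup_f\Big\{ Bn\,F(s)\,\langle \eta^{\rightarrow\varepsilon n}(0)-\eta(0),f\rangle_{\nu_a}-\tfrac{n^2}{2}D_n(\sqrt f,\nu_a)\Big\}\,ds,
\end{align*}
the supremum being over densities $f$ with respect to $\nu_a$.

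For \eqref{rlpright} I would then telescope: writing $\eta(y)-\eta(0)=\sum_{j=0}^{y-1}[\eta(j+1)-\eta(j)]$ for $y\geq1$, every bond $\{j,j+1\}$ with $j\geq0$ lies inside $\mathbb{N}$ and is therefore \emph{fast}, so
\begin{align*}
\langle \eta^{\rightarrow\varepsilon n}(0)-\eta(0),f\rangle_{\nu_a}=\frac{1}{\varepsilon n}\sum_{y=1}^{\varepsilon n}\sum_{j=0}^{y-1}\langle \eta(j+1)-\eta(j),f\rangle_{\nu_a}.
\end{align*}
Each inner scalar product is bounded by the Young-type estimate \eqref{young}, producing a term $\tfrac{1}{2A_j}I_{j,j+1}(\sqrt f,\nu_a)$ plus an error $2A_j$. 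The bond $\{j,j+1\}$ occurs in $\varepsilon n-j$ of the telescoping paths, so choosing $A_j$ to make its $I_{j,j+1}$-coefficient match the corresponding fast term of $\tfrac{n^2}{2}D_n^{\mcb F}$ absorbs every Dirichlet contribution. The leftover errors sum (after division by $Bn$) to a quantity of order $B\|F\|_\infty^2\,\varepsilon$; taking $\limsup_n$ and then $\limsup_{\varepsilon\to0^+}$ leaves only $C_a/B$, and letting $B\to\infty$ yields \eqref{rlpright}. Crucially no slow bond is ever used, which is why this holds for every $\beta\geq0$.

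For \eqref{rlpleft} the computation is identical, except that a telescoping path from $0$ to a site $y\in\{-\varepsilon n,\dots,-1\}$ must cross the origin once, through the bond $\{-1,0\}\in\mcb S_0$, while the remaining bonds $\{j,j+1\}$ with $j\leq-2$ stay inside $\mathbb{Z}_{-}^{*}$ and are fast. The fast bonds are absorbed exactly as before; but $\{-1,0\}$ occurs in all $\varepsilon n$ paths and the only Dirichlet term available for it is the slow one, carrying the prefactor $\alpha n^{-\beta}$, so the matching choice of $A_{-1}$ forces the associated error (after dividing by $Bn$) to be of order $B\|F\|_\infty^2\,n^{\beta-1}$. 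This vanishes as $n\to\infty$ precisely when $\beta<1$; the remaining fast-bond errors are again $O(\varepsilon)$, and $B\to\infty$ closes the argument. The main obstacle is exactly this slow-bond bookkeeping: one must check that a single crossing suffices and that the multiplicities (up to $\varepsilon n$ distinct bonds, each appearing in up to $\varepsilon n$ paths) combine so that the fast contributions stay $O(\varepsilon)$ while the unique slow contribution is the one that produces, and is controlled by, the threshold $\beta=1$.
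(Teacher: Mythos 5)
Your proposal is correct and follows essentially the same route as the paper's own proof: entropy inequality plus Feynman--Kac, telescoping over nearest-neighbour bonds (all fast on the positive side; exactly one potentially slow bond $\{-1,0\}$ on the negative side), and Young's inequality \eqref{young} with constants matched against the Dirichlet form \eqref{bound}, producing errors of order $B\|F\|_{\infty}^2\varepsilon$ from the fast bonds plus, on the left side only, $B\|F\|_{\infty}^2 n^{\beta-1}$ from the slow crossing, which is precisely what creates the restriction $\beta<1$. The only cosmetic difference is the handling of the free parameter: you keep $B$ fixed through both limits and send $B\to\infty$ at the end, while the paper couples it to $\varepsilon$ by choosing $D=\varepsilon^{-1/2}$; both choices close the argument.
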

\begin{proof}
We present here only the proof of \eqref{rlpleft}, but we observe that the proof of \eqref{rlpright} is analogous. In order to control the expectation in \eqref{rlpleft}, we  apply  the entropy and Jensen's inequalities. Then,  from the hypothesis on the entropy of $\mu_n$ and by Feynman-Kac's formula, for every $D>0$, we can bound it from above  by 
\begin{align*}
\frac{C_a}{D} + T \sup_{f} \Big\{ \|F\|_{\infty} | \langle \eta^{\leftarrow  \varepsilon n}(0) - \eta(0) , f \rangle_{\nu_{a}} |  + \frac{n}{D} \langle \mcb L_n \sqrt{f}, \sqrt{f} \rangle_{\nu_{a}}  \Big\}, 
\end{align*}
where the supremum is carried over all the densities $f$ with respect to $\nu_{a}$. The absolute value in the first line was removed by observing that $e^{|x|} \leq e^{x}+ e^{-x}$ and also \eqref{largedev}.  Note that
\begin{align*}
&\langle  \eta^{\leftarrow  \varepsilon n}(0) - \eta(0) , f \rangle_{\nu_{a}}   = \frac{1}{\varepsilon n}  \sum_{y=-\varepsilon n}^{ -1}   \sum_{x=y+1}^{0} \int   [  \eta(x-1) -  \eta(x) ] f(\eta) d \nu_{a} .
\end{align*}

Taking $A_{-1,0} = \frac{D \|F\|_{\infty}}{n^{1-\beta} p(1)  \alpha}$ and  $A_{x-1,x}=\frac{D ||F||_{\infty}}{p(1) n}$ for every $- \varepsilon n +1 \leq x \leq -1$ in \eqref{young}, we have
\begin{align*}
\Big| \int    [  \eta(-1) -  \eta(0) ] f(\eta) d \nu_{a} \Big| \leq &  \frac{n^{1-\beta} p(1) \alpha}{2 D \|F\|_{\infty}} I_{-1,0}  (\sqrt{f}, \nu_a ) + \frac{2D \|F\|_{\infty}}{n^{1-\beta} p(1)  \alpha},
\end{align*}
\begin{align*}
\Big| \int   [  \eta(x-1) -  \eta(x) ] f(\eta) d \nu_{a} \Big| \leq \frac{p(1) n}{2 D \|F\|_{\infty}} I_{x-1,x}  (\sqrt{f}, \nu_a ) + \frac{2D \|F\|_{\infty}}{p(1) n}.
\end{align*}
Proposition \ref{bound} leads to
\begin{align*}
\frac{n}{D} \langle \mcb L_n \sqrt{f}, \sqrt{f} \rangle_{\nu_{a}} \leq& - \frac{n^{1-\beta} p(1)  \alpha}{2 D} I_{-1,0}  (\sqrt{f}, \nu_a ) - \sum_{x=- \varepsilon n+1}^{-1} \frac{ p(1) n}{2 D} I_{x-1,x}  (\sqrt{f}, \nu_a ).
\end{align*}
Then we get
\begin{align*}
\|F\|_{\infty} | \langle \eta^{\leftarrow  \varepsilon n}(0) - \eta(0) , f \rangle_{\nu_{a}} | + \frac{n}{D} \langle \mcb L_n \sqrt{f}, \sqrt{f} \rangle_{\nu_{a}}  \leq   \frac{2D (\|F\|_{\infty})^2}{n^{1-\beta} p(1)  \alpha} +  \frac{2D (\|F\|_{\infty})^2}{p(1)  } \varepsilon,
\end{align*}
for every $f$ density with respect to $\nu_{a}$ and for every $D>0$. Taking $D=\varepsilon^{-\frac{1}{2}}$, we bound the expectation in \eqref{rlpleft} by 
\begin{align*}
  \frac{C_a}{\varepsilon^{-\frac{1}{2}}}  + T \frac{2 (\|F\|_{\infty})^2 \varepsilon^{-\frac{1}{2}}}{n^{1-\beta} p(1) \alpha} + T \frac{2 (\|F\|_{\infty})^2 \varepsilon^{-\frac{1}{2}}}{p(1)  } \varepsilon,
\end{align*}
so that the proof ends by taking first the limit in $n\to+\infty$ and then $\epsilon \to 0$, and recalling that $\beta<1$. 
\end{proof}
Analogously, we can prove the more general result given below, that deals only with fast bonds and holds for every $\beta \geq 0$.
\begin{lem} \textbf{(Replacement Lemma)} \label{replemma}
Let $F \in L^{\infty}([0,T])$ and $\theta \in L^1(\mathbb{Z})$. Then for every $t \in [0,T]$, 
\begin{equation} \label{rlleft}
\limsup_{\varepsilon \rightarrow 0^+} \limsup_{n \rightarrow \infty} \mathbb{E}_{\mu_n} \Big[ \Big| \int_0^t F(s) \sum_{z =-\infty}^{-1}  \theta(z) [\eta_{s}^{n}(z) - \eta_s^{\leftarrow\varepsilon n }(0) ] ds \Big|   \Big] = 0,
\end{equation}
\begin{equation} \label{rlright}
\limsup_{\varepsilon \rightarrow 0^+} \limsup_{n \rightarrow \infty} \mathbb{E}_{\mu_n} \Big[ \Big|  \int_0^t F(s)  \sum_{z =0}^{\infty}  \theta(z) [\eta_{s}^{n}(z) - \eta_s^{\rightarrow  \varepsilon n}(0) ] ds \Big|   \Big] = 0.
\end{equation}
\end{lem}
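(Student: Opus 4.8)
The plan is to prove \eqref{rlleft}, the bound \eqref{rlright} being entirely analogous after replacing the box $\{-\varepsilon n,\dots,-1\}\subset\mathbb{Z}_{-}^{*}$ by $\{1,\dots,\varepsilon n\}\subset\mathbb{N}$ and telescoping inside $\mathbb{N}$. The decisive structural feature, which distinguishes this lemma from Lemma \ref{partreplemma} and explains why it holds for \emph{every} $\beta\geq 0$, is that the site $z$ and the averaging box both lie in $\mathbb{Z}_{-}^{*}$; hence every bond used below is a \emph{fast} bond and the slow rate $\alpha n^{-\beta}$ never enters. First I would reduce to a single site: since $|\eta_s^n(z)-\eta_s^{\leftarrow\varepsilon n}(0)|\leq 1$ and $\theta\in L^1(\mathbb{Z})$, for any $M\geq 1$ the tail $\sum_{z<-M}$ contributes at most $\|F\|_{\infty}\,T\sum_{z<-M}|\theta(z)|$ to the expectation, uniformly in $\varepsilon$ and $n$, and this vanishes as $M\to\infty$. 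It therefore suffices to show, for each fixed $z<0$,
\begin{equation*}
\limsup_{\varepsilon\to 0^+}\limsup_{n\to\infty}\mathbb{E}_{\mu_n}\Big[\Big|\int_0^t F(s)\,[\eta_s^n(z)-\eta_s^{\leftarrow\varepsilon n}(0)]\,ds\Big|\Big]=0,
\end{equation*}
then to sum the resulting (finitely many) bounds with weights $\theta(z)$, and only afterwards let $M\to\infty$.

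For a fixed $z<0$ I would follow the scheme of Lemma \ref{partreplemma}: by the entropy inequality, Jensen's inequality, the hypothesis $H(\mu_n | \nu_a)\leq C_a n$ and the Feynman--Kac formula (together with $e^{|x|}\leq e^x+e^{-x}$ and \eqref{largedev}), the expectation above is bounded, for every $D>0$, by
\begin{equation*}
\frac{C_a}{D}+T\sup_f\Big\{\|F\|_{\infty}\,\big|\langle \eta(z)-\eta^{\leftarrow\varepsilon n}(0),f\rangle_{\nu_a}\big|+\frac{n}{D}\langle\mcb L_n\sqrt f,\sqrt f\rangle_{\nu_a}\Big\},
\end{equation*}
the supremum being over densities $f$ with respect to $\nu_a$. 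Writing $\eta^{\leftarrow\varepsilon n}(0)=(\varepsilon n)^{-1}\sum_{y=-\varepsilon n}^{-1}\eta(y)$ and telescoping each difference $\eta(z)-\eta(y)$ along nearest-neighbour bonds, I obtain a sum of terms $\langle\eta(x+1)-\eta(x),f\rangle$ over bonds $\{x,x+1\}$ with $x+1\leq -1$, all of which lie in $\mcb F$.

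To each such term I would apply the elementary bound \eqref{young} with $A_{x,x+1}=D\|F\|_{\infty}/(p(1)n)$, and then use \eqref{bound} to write $\frac{n}{D}\langle\mcb L_n\sqrt f,\sqrt f\rangle_{\nu_a}=-\frac{n}{2D}D_n(\sqrt f,\nu_a)\leq-\frac{n}{2D}D_n^{\mcb F}(\sqrt f,\nu_a)$. Since each fast bond $\{x,x+1\}$ is traversed by at most $\varepsilon n$ of the telescoping paths, its total weight after division by $\varepsilon n$ is at most $\|F\|_{\infty}$, so the coefficient of $I_{x,x+1}(\sqrt f,\nu_a)$ produced by \eqref{young} is at most $p(1)n/(2D)$ and is exactly absorbed by the matching term of $-\frac{n}{2D}D_n^{\mcb F}$. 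What remains are the constants $2A_{x,x+1}$; as the telescoping distance $|z-y|$ is $O(\varepsilon n)$ and there are $\varepsilon n$ values of $y$, these sum to $O\big(\varepsilon D\|F\|_{\infty}^2/p(1)\big)$ — crucially \emph{with no factor $n^{1-\beta}$}, because no slow bond was used. Choosing $D=\varepsilon^{-1/2}$ bounds the whole expression by $C_a\varepsilon^{1/2}+C\,T\|F\|_{\infty}^2\varepsilon^{1/2}/p(1)$, which is independent of $n$; thus $\limsup_n$ is immediate and $\limsup_{\varepsilon\to 0^+}$ gives $0$ for every $\beta\geq 0$.

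The main obstacle is the bookkeeping in the previous paragraph: one must verify that the number of telescoping paths through a given bond, weighted by $(\varepsilon n)^{-1}$, stays bounded by $\|F\|_{\infty}$, so that the Dirichlet form $D_n^{\mcb F}$ — whose coefficients are $O(1)$, not $O(n^{-\beta})$ — suffices to absorb every term of \eqref{young} while the leftover constants remain $O(\varepsilon)$. This is the same computation as in Lemma \ref{partreplemma}, but with all bonds fast; the contrast is precisely that there the crossing of the slow bond $\{-1,0\}$ produced the term of order $\varepsilon^{-1/2}n^{-(1-\beta)}$ that forced $\beta<1$, whereas here that term is absent. A secondary point is the order of limits: the reduction to a finite sum must be arranged so that $M\to\infty$ is taken \emph{after} $\limsup_{\varepsilon}\limsup_n$, which is legitimate because the tail estimate is uniform in $\varepsilon$ and $n$.
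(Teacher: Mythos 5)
Your proof is correct and follows essentially the same approach as the paper: the paper's entire proof of this lemma is the remark that it is ``analogous'' to Lemma \ref{partreplemma}, i.e.\ the same entropy inequality/Feynman--Kac/telescoping/Young scheme, with the key point being exactly the one you isolate --- every bond used lies inside $\mathbb{Z}_{-}^{*}$ (resp.\ $\mathbb{N}$), hence is fast, so the term of order $\varepsilon^{-1/2}n^{-(1-\beta)}$ that forced $\beta<1$ in Lemma \ref{partreplemma} never appears and all $\beta\geq 0$ are allowed. Your preliminary truncation in $z$, justified uniformly in $\varepsilon$ and $n$ using only $\theta\in L^1(\mathbb{Z})$, is a sensible way to make that ``analogous'' argument rigorous, since summing the per-site bounds over all $z$ with weights $|\theta(z)|$ directly would require a first moment of $\theta$ that is not assumed (and indeed fails for the $\theta$'s the paper applies this lemma to when $2<\gamma\leq 3$).
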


\subsection{Convergences without a slow barrier}
In this subsection we will assume that $\sigma_{\mcb S}^2=\sigma^2$ and $0\leq \beta<1$ or $\sigma_{\mcb S}^2<\sigma^2$ and $\beta\geq 0$. In this setting, our space of test functions is $\mcb S_{\textit{Dif}}$.  Now we  analyse the behavior of \eqref{extradif} according to $\beta$, when $n$ goes to infinity. 

\begin{prop} \label{convbound}
Let $t \in [0,T]$ and $G \in \mcb S_{\textrm{Dif}}$. Then, we have
\begin{align}
 \limsup_{\varepsilon \rightarrow 0^+} \limsup_{n \rightarrow \infty} \mathbb{E}_{\mu_n} \Big[\Big| \int_{0}^{t} \Big\{ \frac{n}{2} \sum _{ \{y, z \} \in \mcb S }& \left[ G(s, \tfrac{y}{n}) - G(s, \tfrac{z}{n}) \right] p(y-z) [\eta_s(y)-\eta_s(z)] \nonumber \\
-& \mathbbm{1}_{\beta \geq 1} \frac{\sigma_s^2}{2} \partial_u G(s,0) [ \eta_s^{\rightarrow n \varepsilon}(0) - \eta_s^{\leftarrow n \varepsilon}(0)] \Big\}   ds \Big| \Big] =0. \label{eqconvbound}
\end{align}
\end{prop}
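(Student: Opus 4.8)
The plan is to prove \eqref{eqconvbound} by a two-stage replacement: first replace the discrete gradient of $G$ across each slow bond by the macroscopic derivative $\partial_u G(s,0)$, and then replace the occupation variables at the two endpoints of each slow bond by the empirical box averages $\eta_s^{\rightarrow \varepsilon n}(0)$ and $\eta_s^{\leftarrow \varepsilon n}(0)$ by means of the replacement lemmas. I begin with a symmetrization. Since the summand $[G(s,\tfrac{y}{n})-G(s,\tfrac{z}{n})]\,p(y-z)\,[\eta_s(y)-\eta_s(z)]$ is invariant under the exchange $y\leftrightarrow z$ and every slow bond is counted twice, the first term in \eqref{eqconvbound}, call it $A_n(s)$, equals $n\sum_{x<0\leq w,\,\{x,w\}\in\mcb S}[G(s,\tfrac{x}{n})-G(s,\tfrac{w}{n})]\,p(w-x)\,[\eta_s(x)-\eta_s(w)]$, where I used $p(x-w)=p(w-x)$.

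The crux is the second stage. For each fixed slow bond a first order Taylor expansion of $G(s,\cdot)$ around $0$ gives $n[G(s,\tfrac{x}{n})-G(s,\tfrac{w}{n})]\to (x-w)\,\partial_u G(s,0)$ as $n\to\infty$, uniformly in $s$, with an error of order $(x^2+w^2)/n$ controlled by $\|\partial_u^2 G\|_{\infty}$. A naive absolute estimate of this error summed over all bonds would require a finite \emph{third} moment of $p$, which is not available; the point is to use dominated convergence instead. Indeed, combining the mean value theorem for the discrete gradient with the trivial bound on the linear term, the per-bond discrepancy is bounded by $2\|\partial_u G\|_{\infty}(w-x)\,p(w-x)$, and since $\sum_{x<0\leq w}(w-x)\,p(w-x)=\tfrac12\sigma_{\mcb S}^2<\infty$ by the finite variance assumption, this is a summable dominating function. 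Hence $\sup_{s\in[0,T]}n\sum_{x<0\leq w}\big|\,n[G(s,\tfrac{x}{n})-G(s,\tfrac{w}{n})]-(x-w)\partial_u G(s,0)\,\big|\,p(w-x)\to 0$ (using $|\eta_s(x)-\eta_s(w)|\leq 1$ to discard the occupation variables), so that $\mathbb{E}_{\mu_n}\big[\,|\int_0^t (A_n(s)-\tilde A_n(s))\,ds|\,\big]\to 0$, where $\tilde A_n(s):=\partial_u G(s,0)\sum_{x<0\leq w,\,\{x,w\}\in\mcb S}|w-x|\,p(w-x)\,[\eta_s(w)-\eta_s(x)]$.

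It remains to handle $\tilde A_n$. Splitting the two endpoints, I write $\tilde A_n(s)=\partial_u G(s,0)\big[\sum_{w\geq 0}\theta^{+}(w)\eta_s(w)-\sum_{x<0}\theta^{-}(x)\eta_s(x)\big]$, where $\theta^{+}(w):=\sum_{x<0:\{x,w\}\in\mcb S}|w-x|p(w-x)$ and $\theta^{-}(x):=\sum_{w\geq 0:\{x,w\}\in\mcb S}|w-x|p(w-x)$. Both are in $L^1(\mathbb{Z})$ with $\|\theta^{+}\|_1=\|\theta^{-}\|_1=\tfrac12\sigma_{\mcb S}^2$, and crucially they do not depend on $n$. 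Applying Lemma \ref{replemma}, equations \eqref{rlright} and \eqref{rlleft}, with $F(s)=\partial_u G(s,0)\in L^\infty([0,T])$, I may replace $\eta_s(w)$ by $\eta_s^{\rightarrow \varepsilon n}(0)$ and $\eta_s(x)$ by $\eta_s^{\leftarrow \varepsilon n}(0)$ with an error vanishing in the iterated limit $\limsup_{\varepsilon\to0^+}\limsup_{n\to\infty}$. This leaves $\tfrac{\sigma_{\mcb S}^2}{2}\partial_u G(s,0)\,[\eta_s^{\rightarrow \varepsilon n}(0)-\eta_s^{\leftarrow \varepsilon n}(0)]$.

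Finally I conclude by cases, recalling $\sigma_{\mcb S}^2=\sigma_s^2$. When $\beta\geq 1$ this expression is exactly $B_n(s):=\mathbbm{1}_{\beta\geq1}\tfrac{\sigma_{\mcb S}^2}{2}\partial_u G(s,0)[\eta_s^{\rightarrow n\varepsilon}(0)-\eta_s^{\leftarrow n\varepsilon}(0)]$, so the difference in \eqref{eqconvbound} vanishes. When $\beta<1$ one has $B_n\equiv 0$, and I invoke Lemma \ref{partreplemma}: writing $\eta_s^{\rightarrow \varepsilon n}(0)-\eta_s^{\leftarrow \varepsilon n}(0)=[\eta_s^{\rightarrow \varepsilon n}(0)-\eta_s^n(0)]-[\eta_s^{\leftarrow \varepsilon n}(0)-\eta_s^n(0)]$ and applying \eqref{rlpright} (valid for all $\beta\geq0$) together with \eqref{rlpleft} (valid precisely for $\beta<1$, which is where the condition on $\beta$ enters), with $F(s)=\tfrac{\sigma_{\mcb S}^2}{2}\partial_u G(s,0)$, shows the remaining term vanishes as well. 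The main obstacle is the second stage, namely interchanging the limit with the infinite sum over slow bonds using only the finite variance of $p(\cdot)$; this is resolved by the dominated convergence argument above rather than by a crude uniform bound.
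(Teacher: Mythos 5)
Your proposal is correct and its skeleton coincides with the paper's proof: a two-stage replacement in which one first substitutes the discrete gradients $n[G(s,\tfrac{y}{n})-G(s,\tfrac{z}{n})]$ across slow bonds by $(y-z)\partial_u G(s,0)$, and then splits the resulting expression into two $L^1(\mathbb{Z})$ weights supported on $\mathbb{N}$ and on $\mathbb{Z}_{-}^{*}$ (your $\theta^{\pm}$ are exactly the paper's $\theta$), closing with Lemma \ref{replemma} for all $\beta\geq 0$ and, in the case $\beta<1$, additionally with Lemma \ref{partreplemma} through the site $0$ --- the same case split and the same cancellation $\sum_{z\geq 0}\theta(z)=\sum_{z\leq -1}\theta(z)=\tfrac{1}{2}\sigma_{\mcb S}^2$ used in the paper. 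Where you genuinely deviate is the deterministic first stage: the paper delegates it to Proposition \ref{neum1}, proved by a second-order Taylor expansion whose error is controlled by a Ces\`aro-type bound $\tfrac{1}{n}\sum_{z}\sum_{y}(z-y)^2p(z-y)$ vanishing by finite variance; you instead use the mean value theorem to get the per-bond bound $2\|\partial_u G\|_\infty(w-x)p(w-x)$ and apply dominated convergence over the fixed, $n$-independent set of slow bonds, observing that $\sum_{x<0\leq w}(w-x)p(w-x)=\tfrac12 \sigma_{\mcb S}^2<\infty$ because the number of bonds crossing the origin at distance $r$ is $r$, so this first-moment-looking sum is really a second-moment quantity. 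This is a clean, slightly more elementary alternative to Proposition \ref{neum1}; both need only finite variance, and your remark that a crude term-by-term estimate would ask for a third moment is exactly the difficulty both arguments circumvent.

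One slip should be corrected: in your display ``$\sup_{s\in[0,T]}\,n\sum_{x<0\leq w}\big|n[G(s,\tfrac{x}{n})-G(s,\tfrac{w}{n})]-(x-w)\partial_u G(s,0)\big|\,p(w-x)\to 0$'' the prefactor $n$ is spurious: the factor $n$ in $A_n$ has already been absorbed into the discrete gradient, so the quantity your dominated convergence argument controls --- and the only one needed to bound $|A_n(s)-\tilde A_n(s)|$ --- is the sum \emph{without} that prefactor. As literally written, with the extra $n$, the claim is not justified (and is false in general). With that typographical correction, the proof is complete.
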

\begin{proof}
Define $F: [0,T] \rightarrow \mathbb{R}$ by $F(s)= \frac{\partial_u G(s,0)}{2}$ for every $s \in [0,T]$ and $\theta : \mathbb{Z} \rightarrow \mathbb{R}$ by
\begin{align*}
 \theta(x)=\sum _{ \{x, z \} \in \mcb S }  (z-x) p(z-x) \mathbbm{1}_{x \leq -1}+
 \sum _{ \{z, x \} \in \mcb S }  (x-z) p(x-z) \mathbbm{1}_{x \geq 0}.
\end{align*}
Therefore, $F \in L^{\infty}([0,T])$ and $\theta \in L^1(\mathbb{Z})$. Applying Proposition \ref{neum1}, for $\beta \in [0,1)$, we observe that the expectation in \eqref{eqconvbound} is bounded from above by
\begin{align*}
&   \mathbb{E}_{\mu_n} \Big[ \Big| \int_0^t F(s) \sum_{z =0}^{\infty}  \theta(z) [\eta_{s}^{n}(z) - \eta_s^{\rightarrow  \varepsilon n}(0) ] ds \Big|   \Big] + \sum_{z =0}^{\infty}  \theta(z)  \mathbb{E}_{\mu_n} \Big[ \Big| \int_0^t F(s)   [\eta_s^{\rightarrow  \varepsilon n}(0) - \eta_s^n(0)  ] ds \Big|   \Big] \\
+ & \mathbb{E}_{\mu_n} \Big[  \Big| \int_0^t F(s) \sum_{z =-\infty}^{-1}  \theta(z) [\eta_{s}^{n}(z) - \eta_s^{\leftarrow\varepsilon n }(0) ] ds \Big|   \Big] + \sum_{z=-\infty}^{-1}  \theta(z)   \mathbb{E}_{\mu_n} \Big[ \Big|  \int_0^t F(s)   [ \eta_s^{\leftarrow\varepsilon n }(0) - \eta_{s}^{n}(0)] ds  \Big| \Big] .
\end{align*}
for every $\varepsilon >0$. Then, taking  $n \rightarrow \infty$ and then $\varepsilon \rightarrow 0^{+}$,  Lemma \ref{partreplemma} and Lemma \ref{replemma} lead to the desired result.

For $\beta \in [1, \infty)$, we apply Proposition \ref{neum1}, and the expectation in \eqref{eqconvbound} becomes bounded from above by
\begin{align*}
&   \mathbb{E}_{\mu_n}  \Big[  \Big|  \int_0^t F(s)  \sum_{z=0}^{\infty} \theta(z) [|\eta_s(z) -\eta_s^{\rightarrow \varepsilon n}(0)] ds \Big|  \Big] 
+ \mathbb{E}_{\mu_n}  \Big[ \Big|  \int_0^t F(s) \sum_{z=-\infty}^{-1} \theta(z)  [ \eta_s(z)- \eta_s^{\leftarrow \varepsilon n}(0)]  ds \Big| \Big],
\end{align*}
 for every $\varepsilon >0$. Then, taking  $n \rightarrow \infty$ and then $\varepsilon \rightarrow 0^{+}$,  Lemma \ref{replemma} leads to the desired result.

\end{proof}

\subsection{Convergences with a slow barrier} \label{secheurwith}

In this subsection we will assume that $\sigma_{\mcb S}^2 = \sigma^2$ and $\beta \geq 1$. In this setting, our space of test functions is $\mcb S_{Rob}$. Our goal is to  analyse the convergence in $L^1 ( \mathbb{P}_{\mu_n})$ of  $\int_{0}^{t} n^2 \mcb L_{n}\langle \pi_{s}^{n},G(s, \cdot) \rangle ds$, by studying the behavior of \eqref{robterm} and \eqref{neuterm}. We begin with \eqref{robterm}.

\begin{prop} \label{convrob}
Assume $\sigma_{\mcb S}^2 = \sigma^2$ and $ \beta \geq 1$. Let $t \in [0,T]$ and  $G \in \mcb S_{Rob}$. Then, 
\begin{align}
 \limsup_{\varepsilon \rightarrow 0^+} \limsup_{n \rightarrow \infty} \mathbb{E}_{\mu_n} \Big[&\Big| \int_{0}^{t} \Big\{  \alpha n^{1-\beta}  \sum_{\{x,z\} \in \mcb S}  [G(s,\tfrac{x}{n}) - G(s,\tfrac{z}{n}) ]  p(x-z)  \eta_s^n(z) \nonumber \\
-& \mathbbm{1}_{\beta=1} m \alpha [G(s,0^{-})- G(s,0^{+})]  [ \eta_s^{\rightarrow n \varepsilon}(0) - \eta_s^{\leftarrow n \varepsilon}(0)] \Big\}   ds \Big| \Big] =0. \label{eqconvrob01}
\end{align}
\end{prop}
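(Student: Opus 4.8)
The strategy is to separate the regimes $\beta>1$ and $\beta=1$, since the prefactor $\alpha n^{1-\beta}$ is vanishing in the former and of order one in the latter. Throughout I use that under the hypothesis $\sigma_{\mcb S}^2=\sigma^2$ the weighted sums over $\mcb S$ and over $\mcb S_0=\{\{x,z\}:x<0,\,z\geq 0\}$ coincide, the two sets differing only by bonds of zero $p$-weight, so that every sum below may be evaluated over $\mcb S_0$, each bond being counted twice. For $\beta>1$ the indicator $\mathbbm{1}_{\beta=1}$ is absent, so it suffices to bound the first term in the integrand by itself. Using $|\eta_s^n(z)|\leq 1$, $|G|\leq \|G\|_\infty$ and the elementary count $\sum_{\{x,z\}\in\mcb S}p(x-z)=2\sum_{k\geq 1}k\,p(k)=2m$, the inner sum is at most $4m\|G\|_\infty$ in absolute value; hence the expectation in \eqref{eqconvrob01} is bounded by $4m\|G\|_\infty\,\alpha\,t\,n^{1-\beta}$, which tends to $0$.

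For $\beta=1$ the prefactor equals $\alpha$, and I would first split the bond sum into its two ordered contributions, over $\{x<0,\,z\geq 0\}$ and over $\{x\geq 0,\,z<0\}$. On the first of these $\tfrac{x}{n}\to 0^-$ and $\tfrac{z}{n}\to 0^+$, so I replace $G(s,\tfrac{x}{n})$ by $G(s,0^-)$ and $G(s,\tfrac{z}{n})$ by $G(s,0^+)$ (and symmetrically on the second). Since $G_-$ and $G_+$ are globally Lipschitz, the total replacement error is controlled by $\tfrac{\alpha\|\partial_u G\|_\infty}{n}\sum_{x<0,\,z\geq 0}(|x|+|z|)\,p(z-x)$; because $|x|+|z|=z-x$ for such pairs, this equals $\tfrac{\alpha\|\partial_u G\|_\infty}{n}\sum_{k\geq 1}k^2 p(k)=\tfrac{\alpha\|\partial_u G\|_\infty\,\sigma^2}{2n}=O(1/n)$, precisely because the variance is finite.

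After the replacement the integrand becomes $\alpha[G(s,0^-)-G(s,0^+)]\big(\sum_{z\geq 0}P(z)\eta_s^n(z)-\sum_{z<0}Q(z)\eta_s^n(z)\big)$, where $P(z):=\sum_{x<0}p(z-x)$ for $z\geq 0$ and $Q(z):=\sum_{x\geq 0}p(z-x)$ for $z<0$. Reindexing by $k=|z-x|$ gives $\sum_{z\geq 0}P(z)=\sum_{z<0}Q(z)=m$, so $\theta:=P\,\mathbbm{1}_{z\geq 0}+Q\,\mathbbm{1}_{z<0}\in L^1(\mathbb{Z})$ and $F(s):=\alpha[G(s,0^-)-G(s,0^+)]\in L^\infty([0,T])$. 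Writing $m\,\eta_s^{\rightarrow n\varepsilon}(0)=\sum_{z\geq 0}P(z)\,\eta_s^{\rightarrow n\varepsilon}(0)$ and $m\,\eta_s^{\leftarrow n\varepsilon}(0)=\sum_{z<0}Q(z)\,\eta_s^{\leftarrow n\varepsilon}(0)$, the difference between the replaced integrand and the target term collapses to $F(s)\sum_{z\geq 0}P(z)[\eta_s^n(z)-\eta_s^{\rightarrow n\varepsilon}(0)]-F(s)\sum_{z<0}Q(z)[\eta_s^n(z)-\eta_s^{\leftarrow n\varepsilon}(0)]$, whose $\limsup_{\varepsilon\to0^+}\limsup_{n\to\infty}$ expectation vanishes by the Replacement Lemma \ref{replemma}. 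The truncation of these series is organised exactly as in the proof of Proposition \ref{convbound}, through Proposition \ref{neum1}.

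The delicate point will be the bookkeeping near the origin when $\beta=1$: because $G\in\mcb S_{Rob}$ is genuinely discontinuous at $0$, one must attach the correct one-sided limit $G(s,0^{\pm})$ to each ordered pair $(x,z)$, and simultaneously keep the Lipschitz replacement errors summable. This last requirement is exactly where the finite-variance assumption $\sigma^2<\infty$ (equivalently $\gamma>2$) is used in an essential way, and the argument would break down in the heavy-tailed regime of the companion article.
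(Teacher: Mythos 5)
Your proposal is correct, and its skeleton coincides with the paper's proof: for $\beta>1$ a crude bound of order $n^{1-\beta}$ (the paper does exactly this, and your constant $4m\alpha\|G\|_{\infty}t$, coming from $\sum_{\{x,z\}\in\mcb S}p(x-z)\leq 2m$ with double counting, is right); for $\beta=1$ the same splitting into the two ordered sums over $\{x<0,z\geq 0\}$ and $\{x\geq 0,z<0\}$, the same kernel $\theta=P\mathbbm{1}_{z\geq0}+Q\mathbbm{1}_{z<0}$ with $\sum_{z\geq0}P(z)=\sum_{z<0}Q(z)=m$, and the same final appeal to the Replacement Lemma \ref{replemma}. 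Where you genuinely diverge is in the one analytic estimate at the heart of the proof: to replace $G(s,\tfrac{x}{n})-G(s,\tfrac{z}{n})$ by the jump $G(s,0^{\mp})-G(s,0^{\pm})$, the paper invokes Proposition \ref{lemconvrob}, whose proof splits the sum at $|x|\geq\varepsilon n$, uses uniform continuity of $G_{\pm}$ on the near region, and treats the far region by a separate tail computation for the finite-range and long-range ($p(k)=c_\gamma k^{-\gamma-1}$, $\gamma>2$) cases. You instead use that $G_{\pm}$ are globally Lipschitz in $u$ (uniformly in $s$, since they are polynomials in $t$ with $C_c^{\infty}$ coefficients) and sum the pointwise error against $(z-x)p(z-x)$, getting the explicit bound $\tfrac{\alpha\|\partial_u G\|_{\infty}}{n}\sum_{k\geq1}k^2p(k)=\tfrac{\alpha\|\partial_u G\|_{\infty}\sigma^2}{2n}$. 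This is more elementary, gives a quantitative $O(1/n)$ rate uniform in $s$, and works for any symmetric $p$ with finite variance without the finite-range/long-range case analysis; the paper's route only needs uniform continuity of the test function but pays for it with the $\varepsilon$-splitting and the explicit tail decay. One small wart in your write-up: the closing remark that "the truncation of these series is organised as in Proposition \ref{convbound}, through Proposition \ref{neum1}" is out of place — Proposition \ref{neum1} concerns $\mcb S_{\textrm{Dif}}$ test functions and the $n$-scaled term \eqref{extradif}, and in fact no truncation is needed in your argument, since all your series converge absolutely and your error bounds are already uniform; deleting that sentence leaves a complete proof.
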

\begin{proof}
For $\beta > 1$, the expectation in \eqref{eqconvrob01} is bounded from above by a constant times $n^{1-\beta}$, and we are done. Now assume $\beta=1$. Since $\sigma_{\mcb S}^2 = \sigma^2$, we have 
\begin{align*}
&\alpha n^{1-\beta} \sum_{\{x,z\} \in \mcb S}  [G(s,\tfrac{x}{n}) - G(s,\tfrac{z}{n}) ]  p(x-z)  \eta_s^n(z) \\
=& \alpha  \sum_{z=0}^{\infty}  \sum_{x = - \infty}^{-1}  [G(s,\tfrac{x}{n}) - G(s,\tfrac{z}{n}) ]  p(x-z)  \eta_s^n(z) + \alpha  \sum_{z = - \infty}^{-1} \sum_{x=0}^{\infty}    [G(s,\tfrac{x}{n}) - G(s,\tfrac{z}{n}) ]  p(x-z)  \eta_s^n(z).
\end{align*}
Define $\theta: \mathbb{Z} \rightarrow \mathbb{R}$ by
$
\theta(y)=  \sum_{z=0}^{\infty}   p(z-y) \mathbbm{1}_{y\leq -1}+\sum_{x=-\infty}^{-1}   p(y-x) \mathbbm{1}_{ y \geq 0}.
$
We observe that $\sum_{z=-\infty}^{-1} \theta(z) = \sum_{z=0}^{\infty} \theta(z)=m$. Let $F(s)=[G(s,0^{-})- G(s,0^{+})], \forall s \in [0,T]$. Then the expectation in \eqref{eqconvrob01} can be bounded from above by
\begin{align}
 &\alpha\mathbb{E}_{\mu_n} \Big[ \Big| \int_{0}^{t} \Big\{     \sum_{z=0}^{\infty}  \sum_{x = - \infty}^{-1}  [G(s,\tfrac{x}{n}) - G(s,\tfrac{z}{n}) ]  p(x-z)  \eta_s^n(z)-  F(s)  \eta_s^{\rightarrow \varepsilon n}(0) \sum_{z=0}^{\infty} \theta(z)  \Big\}   ds \Big| \Big] \label{eqconvrob02} \\
+ & \alpha \mathbb{E}_{\mu_n} \Big[ \Big| \int_{0}^{t} \Big\{     \sum_{z = - \infty}^{-1} \sum_{x=0}^{\infty}    [G(s,\tfrac{x}{n}) - G(s,\tfrac{z}{n}) ]  p(x-z)  \eta_s^n(z)- F(s)  \eta_s^{\leftarrow  \varepsilon n}(0) \sum_{z=-\infty}^{-1} \theta(z) \Big\}   ds \Big|\Big].  \label{eqconvrob03}
\end{align}
 The expectation in \eqref{eqconvrob02} can be bounded from above by
\begin{align*}
& \mathbb{E}_{\mu_n} \Big[ \Big| \int_{0}^{t} \Big\{   \sum_{z=0}^{\infty}  \sum_{x = - \infty}^{-1} p(x-z) \eta_s^n(z) \big(  [G(s,\tfrac{x}{n}) - G(s,\tfrac{z}{n}) ]  -F(s)  \big)  \Big\} ds \Big| \Big]   \\
 &+  \mathbb{E}_{\mu_n} \Big[ \Big| \int_{0}^{t}  F(s)   \sum_{z=0}^{\infty} \theta(z)  [ \eta_s^n(z)-    \eta_s^{\rightarrow n \varepsilon}(0) ]    ds \Big| \Big] .
\end{align*}
Since $F \in L^\infty([0,T])$ and $\theta \in L^1(\mathbb{Z})$, taking  $n \rightarrow \infty$ and then $\varepsilon \rightarrow 0^{+}$, we get from Proposition \ref{lemconvrob} and Lemma \ref{replemma} that the expectation in \eqref{eqconvrob02} goes to zero. In an analogous way, the same happens with the expectation in \eqref{eqconvrob03} and we get the desired result.
\end{proof}

Now we will prove the following convergence in $L^1(\mathbb{P}_{\mu_n})$:
\begin{prop} \label{convrneu}
Let $t>0$ and $G \in \mcb S_{\textrm{Rob}}$.
Then, we have
\begin{equation} \label{convneupos}
\limsup_{\varepsilon \rightarrow 0^{+}} \limsup_{n \rightarrow \infty} \mathbb{E}_{\mu_n} \Big[ \Big| \int_{0}^{t} \Big\{ \sum_{z=0}^{\varepsilon n -1} \sum_{x=0}^{\infty} \big[ n[G(s,\tfrac{x}{n}) - G(s,\tfrac{z}{n}) ]p(x-z)  \eta_s^n(z) - \frac{\sigma^2}{2} \partial_u G(s,0^{+})   \eta_s^{\rightarrow \varepsilon n}(0)  \Big\} ds \Big| \Big] =0
\end{equation}
and
\begin{equation}  \label{convneuneg}
\limsup_{\varepsilon \rightarrow 0^{+}} \limsup_{n \rightarrow \infty} \mathbb{E}_{\mu_n} \Big[ \Big| \int_{0}^{t} \Big\{ \sum_{z=-\varepsilon n +1}^{-1} \sum_{x=-\infty}^{-1}  n[G(s,\tfrac{x}{n}) - G(s,\tfrac{z}{n}) ]p(x-z)  \eta_s^n(z) + \frac{\sigma^2}{2}  \partial_u G(s,0^{-})   \eta_s^{\leftarrow \varepsilon n}(0)   \Big\} ds \Big| \Big] =0.
\end{equation}
\end{prop}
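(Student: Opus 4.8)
The plan is to establish \eqref{convneupos} in detail; the companion bound \eqref{convneuneg} follows from the mirror-image argument, with $\mathbb{N}$, $G_{+}$ and $\eta_s^{\rightarrow \varepsilon n}(0)$ replaced by $\mathbb{Z}_{-}^{*}$, $G_{-}$ and $\eta_s^{\leftarrow \varepsilon n}(0)$, the extra $+$ sign in \eqref{convneuneg} being exactly the minus sign carried by the left flux kernel below. Throughout, I write $G=G_{+}$ on $[0,\infty)$ with $G_{+}\in P([0,T],C_c^{\infty}(\mathbb{R}))$ smooth on all of $\mathbb{R}$, so that $\partial_u G(s,0^{+})=\partial_u G_{+}(s,0)$. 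The key quantity is the boundary kernel
\[
\psi(z):=\sum_{w=z+1}^{\infty} w\, p(w)=\sum_{w\geq -z} w\, p(w),\qquad z\geq 0,
\]
the second identity following from the symmetry and zero mean of $p$. Because $p$ has finite variance, $\psi\geq 0$ is summable and, interchanging the order of summation,
\[
\sum_{z=0}^{\infty}\psi(z)=\sum_{w=1}^{\infty} w^{2} p(w)=\frac{\sigma^{2}}{2}.
\]
This Abel-type identity is what converts the microscopic long-range flux across the origin into the macroscopic coefficient $\sigma^{2}/2$.

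First I would fix $z\in\{0,\dots,\varepsilon n-1\}$, set $x=z+w$, and Taylor expand $G_{+}$ to first order with Lagrange remainder:
\[
n\sum_{x=0}^{\infty}\big[G_{+}(s,\tfrac{x}{n})-G_{+}(s,\tfrac{z}{n})\big]p(x-z)=\partial_u G_{+}(s,\tfrac{z}{n})\,\psi(z)+R_n(s,z),
\]
where, since $G_{+}''$ is globally bounded and $p$ has finite variance, $|R_n(s,z)|\leq \tfrac{\sigma^{2}}{2n}\|\partial_u^{2}G_{+}\|_{\infty}$ uniformly in $z$ and $s$ (no use of compact support is needed, only the global bound on $G_{+}''$). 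Summing the remainder over the $\varepsilon n$ values of $z$, weighting by $|\eta_s^n(z)|\leq 1$ and integrating in $s$, its contribution to the expectation is at most $Ct\varepsilon$, hence vanishes after $\limsup_{n}$ and then $\limsup_{\varepsilon\to0^{+}}$.

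Next I would replace $\partial_u G_{+}(s,\tfrac{z}{n})$ by $\partial_u G(s,0^{+})$ in the leading sum; the error is controlled by $|\partial_u G_{+}(s,\tfrac{z}{n})-\partial_u G_{+}(s,0)|\leq \|\partial_u^{2}G_{+}\|_{\infty}\,\varepsilon$ for $z\leq \varepsilon n$, giving a total error bounded by $\tfrac{\sigma^{2}}{2}\|\partial_u^{2}G_{+}\|_{\infty}\varepsilon=O(\varepsilon)$. I would then extend the sum over $z$ from $\{0,\dots,\varepsilon n-1\}$ to all of $\mathbb{N}$, the discarded tail being bounded by $\sum_{z\geq \varepsilon n}\psi(z)$, which tends to $0$ as $n\to\infty$ for fixed $\varepsilon$ because $\psi\in L^{1}(\mathbb{N})$. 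Using $\sum_{z\geq0}\psi(z)=\sigma^{2}/2$ to rewrite the subtracted term in \eqref{convneupos} as $\partial_u G(s,0^{+})\,\eta_s^{\rightarrow\varepsilon n}(0)\sum_{z\geq0}\psi(z)$, what remains is exactly
\[
\partial_u G(s,0^{+})\sum_{z=0}^{\infty}\psi(z)\big[\eta_s^n(z)-\eta_s^{\rightarrow\varepsilon n}(0)\big].
\]
With $F(s)=\partial_u G(s,0^{+})\in L^{\infty}([0,T])$ and $\theta=\psi\in L^{1}(\mathbb{Z})$, this is precisely the quantity whose iterated $\limsup$ vanishes by \eqref{rlright} of the Replacement Lemma \ref{replemma}. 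Collecting the four contributions — Taylor remainder, derivative replacement, tail truncation and the replacement lemma — each of which vanishes under $\limsup_{\varepsilon\to0^{+}}\limsup_{n\to\infty}$, proves \eqref{convneupos}; for \eqref{convneuneg} one argues identically with $x=z+w\leq-1$, whose flux kernel is $\sum_{w\leq -1-z}w\,p(w)=-\psi(-z-1)$, and applies \eqref{rlleft} with $\theta(-k)=\psi(k-1)$, the minus sign accounting for the $+$ in \eqref{convneuneg}.

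The main obstacle is not any single estimate but the bookkeeping: one must identify the boundary kernel $\psi$ together with the summation identity $\sum_{z\geq0}\psi(z)=\sigma^{2}/2$, and then arrange the successive approximations so that every error is either $O(\varepsilon)$ uniformly in $n$ (Taylor remainder and derivative replacement) or vanishes as $n\to\infty$ for each fixed $\varepsilon$ (tail truncation and the replacement lemma), so that the iterated limit $\limsup_{\varepsilon\to0^{+}}\limsup_{n\to\infty}$ may be taken term by term. The finite-variance hypothesis is used decisively twice, to bound the Taylor remainder and to guarantee that $\psi\in L^{1}$.
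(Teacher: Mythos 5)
Your proof is correct and follows essentially the same route as the paper: you identify the same boundary kernel (the paper's $\theta$, your $\psi$), use the same identity $\sum_{z\geq 0}\theta(z)=\tfrac{\sigma^2}{2}$, and split the error into a Taylor/derivative-replacement term, an $L^1$ tail truncation, and the term $F(s)\sum_{z\geq 0}\theta(z)[\eta_s^n(z)-\eta_s^{\rightarrow \varepsilon n}(0)]$ handled by Lemma \ref{replemma}. The only difference is organizational: the paper factors your Taylor step out as Proposition \ref{lemconvneum} and the tail bound as \eqref{lemconvneu3}, whereas you do both inline, and your bookkeeping (summing over $x$ against the signed kernel \emph{before} taking absolute values) is in fact the more careful way to execute that step.
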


\begin{proof}
We will prove only \eqref{convneupos}, but we observe that the proof of \eqref{convneuneg} is analogous.
Define $F:[0,T] \rightarrow \mathbb{R}$ by $F(s)=\partial_u G(s,0^{+}), \forall s \in [0,T]$ and  $\theta: \mathbb{Z} \rightarrow \mathbb{R}$ by
\begin{align*}
\theta(z)=
\begin{cases}
  \sum_{x=-\infty}^{-1} (x-z)    p(x-z) = \sum_{x=-\infty}^{2z} (x-z)    p(x-z) = \sum_{r=-\infty}^{z} r   p(r), z \leq -1, \\
 \sum_{x=0}^{\infty}  (x-z)    p(x-z) = \sum_{x=2z+1}^{\infty}  (x-z)    p(x-z) = \sum_{r=z+1}^{\infty}  r    p(r), z \geq 0.
\end{cases}
\end{align*}
Observe that in the identities above, we used the fact that $p$ is symmetric. 
Observe that $F \in L^{\infty}([0,T])$ and $\sum_{z=-\infty}^{-1} \theta(z) = \frac{\sigma^2}{2}$. Since $0 \leq \eta_s(x) \leq 1$ for all $x\in\mathbb Z$, we have
\begin{equation} \label{lemconvneu3}
\limsup_{\varepsilon \rightarrow 0^{+}} \limsup_{n \rightarrow \infty} \mathbb{E}_{\mu_n} \Big[ \Big| \int_{0}^{t} \Big\{   F(s)  \ \sum_{z=\varepsilon n}^{\infty} \theta(z)  \eta_s^n(z) \Big\} ds \Big| \Big] = 0.
\end{equation}
The expectation in \eqref{convneupos} can be bounded from above by
\begin{align*}
 & \mathbb{E}_{\mu_n} \Big[ \Big| \int_{0}^{t} \Big\{ \sum_{z=0}^{\varepsilon n -1} \sum_{x=0}^{\infty} \big[ n[G(s,\tfrac{x}{n}) - G(s,\tfrac{z}{n}) ] - F(s)(x-z) \big]  p(x-z)  \eta_s^n(z)  \Big\} ds \Big| \Big]  \\
+ & \mathbb{E}_{\mu_n} \Big[ \Big| \int_{0}^{t} \Big\{ \sum_{z=0}^{\varepsilon n -1} \sum_{x=0}^{\infty}  F(s)(x-z)   p(x-z)  \eta_s^n(z) -  F(s)   \eta_s^{\rightarrow \varepsilon n}(0) \sum_{z=0}^{\infty} \theta(z) \Big\} ds \Big| \Big] \\
\leq  &  \mathbb{E}_{\mu_n} \Big[ \Big| \int_{0}^{t} \Big\{ \sum_{z=0}^{\varepsilon n -1} \sum_{x=0}^{\infty} \big[ n[G(s, \tfrac{x}{n}) - G(s, \tfrac{z}{n}) ] - F(s)(x-z) \big]  p(x-z)  \eta_s^n(z)  \Big\} ds \Big| \Big]  \\
+ &  \mathbb{E}_{\mu_n} \Big[ \Big| \int_{0}^{t} F(s) \sum_{z=0}^{\infty} \theta(z) [  \eta_s^n(z) -  \eta_s^{\rightarrow \varepsilon n}(0)] \Big\} ds \Big| \Big] 
+   \mathbb{E}_{\mu_n} \Big[ \Big| \int_{0}^{t} \Big\{  F(s)   \sum_{z=\varepsilon n}^{\infty} \theta(z)  \eta_s^n(z) \Big\} ds \Big| \Big].
\end{align*}
Taking the $\limsup$ when $n \rightarrow \infty$ and afterwards when $\varepsilon \rightarrow 0^{+}$, Proposition \ref{lemconvneum}, Lemma \ref{replemma} and \eqref{lemconvneu3} produce the desired result.
\end{proof}
Finally we can treat \eqref{neuterm}.
\begin{prop} \label{convneu}
Assume $\sigma_{\mcb S}^2 = \sigma^2$. Let $t \in [0,T]$ and $G \in \mcb S_{\textrm{Rob}}$. 
Then, we have
\begin{align*}
 \limsup_{\varepsilon \rightarrow 0^+} \limsup_{n \rightarrow \infty} \mathbb{E}_{\mu_n} \Big[&\Big| \int_{0}^{t} \Big\{ \sum_{\{x,z\} \in \mcb F} n [G(s,\tfrac{x}{n}) - G(s,\tfrac{z}{n}) ]  p(x-z)  \eta_s^n(z) \nonumber \\
-&   \frac{\sigma^2}{2} \Big(  \partial_u G(s, 0^{+})   \eta_s^{\rightarrow \varepsilon n}(0)  -  \partial_u G(s,0^{-})   \eta_s^{\leftarrow \varepsilon n}(0)  +  \frac{1}{n} \sum_{z } \Delta G ( s, \tfrac{z}{n} )\eta_s^n(z) \Big) \Big\}   ds \Big| \Big] =0. 
\end{align*}
\end{prop}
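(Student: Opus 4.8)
The plan is to exploit the barrier hypothesis $\sigma_{\mcb S}^2 = \sigma^2$ to decouple the two half-lines, reduce the boundary contributions to the estimates already proved in Proposition \ref{convrneu}, and treat the remaining bulk by a second-order Taylor expansion.

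First I would use that, when $\sigma_{\mcb S}^2 = \sigma^2$, every bond of $\mcb S_0$ is slow, so a fast bond $\{x,z\}\in\mcb F$ has either both endpoints in $\mathbb{N}$ or both in $\mathbb{Z}_{-}^{*}$. Writing $G=G_-\mathbbm{1}_{u<0}+G_+\mathbbm{1}_{u\geq 0}$, this splits the fast-bond sum as $S_+ + S_-$, where $S_+$ ranges over $x,z\geq 0$ (with $G$ replaced by $G_+$) and $S_-$ over $x,z\leq -1$ (with $G$ replaced by $G_-$). By the triangle inequality it suffices to show that $S_+$ is close, in the double-limit $L^1(\mathbb{P}_{\mu_n})$ sense, to $\tfrac{\sigma^2}{2}\partial_u G(s,0^+)\eta_s^{\rightarrow\varepsilon n}(0)+\tfrac{\sigma^2}{2}\tfrac1n\sum_{z\geq 0}\Delta G(s,\tfrac zn)\eta_s^n(z)$, and that $S_-$ is close to $-\tfrac{\sigma^2}{2}\partial_u G(s,0^-)\eta_s^{\leftarrow\varepsilon n}(0)+\tfrac{\sigma^2}{2}\tfrac1n\sum_{z\leq -1}\Delta G(s,\tfrac zn)\eta_s^n(z)$; since $\Delta G(s,\tfrac zn)=G_+''(s,\tfrac zn)$ for $z\geq 0$ and $=G_-''(s,\tfrac zn)$ for $z\leq -1$, the two targets add up exactly to the claimed limit. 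The negative part $S_-$ is handled identically to $S_+$ after reflection, so I would focus on $S_+$.

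Next I would split the $z$-sum in $S_+$ at the scale $\varepsilon n$, into a boundary block $0\leq z<\varepsilon n$ and a bulk block $z\geq\varepsilon n$. The boundary block is precisely the quantity controlled by \eqref{convneupos} in Proposition \ref{convrneu}, which replaces it by $\tfrac{\sigma^2}{2}\partial_u G(s,0^+)\eta_s^{\rightarrow\varepsilon n}(0)$ in the double limit (and \eqref{convneuneg} does the same for $S_-$, with the correct sign). For the bulk block, fixing $z$ and setting $r=x-z$, the inner sum is $\sum_{r\geq -z}n[G_+(s,\tfrac{z+r}{n})-G_+(s,\tfrac zn)]p(r)$; a Taylor expansion produces a first-order term $G_+'(s,\tfrac zn)\sum_{r\geq -z}r\,p(r)$ and a second-order term $\tfrac{1}{2n}G_+''(s,\tfrac zn)\sum_{r\geq -z}r^2p(r)$. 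Since $z\geq\varepsilon n$, completing the range $r\geq -z$ to all of $\mathbb{Z}$ introduces only the tails $\sum_{r<-z}|r|^k p(r)$ with $k=1,2$; using $\sum_z z^2 p(z)<\infty$ together with $\sum_r r\,p(r)=0$ by symmetry, the first-order term is negligible and the second-order term converges to $\tfrac{\sigma^2}{2n}G_+''(s,\tfrac zn)=\tfrac{\sigma^2}{2n}\Delta G(s,\tfrac zn)$. Summing against $\eta_s^n(z)$ over $z\geq\varepsilon n$ yields $\tfrac{\sigma^2}{2}\tfrac1n\sum_{z\geq\varepsilon n}\Delta G(s,\tfrac zn)\eta_s^n(z)$, and enlarging the range to $z\geq 0$ costs at most $\tfrac{\sigma^2}{2}\varepsilon\|\Delta G\|_\infty$, which vanishes as $\varepsilon\to0^+$.

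The main obstacle is the bulk expansion: because $p$ is only assumed to have finite variance (for $2<\gamma\leq 3$ its third moment diverges), the naive Taylor remainder is not summable. To remedy this I would split each jump into small increments $|r|\leq \ell$, for which the second-order expansion with remainder controlled by $\ell\,\sigma^2/n^2$ per site is admissible, and large increments $|r|>\ell$, which I bound crudely by $2\|G_+\|_\infty\sum_{|r|>\ell}p(r)\lesssim \ell^{-2}\sigma^2$; choosing $\ell=\ell(n)\to\infty$ with $\ell/n\to 0$ makes both contributions vanish after multiplying by the number of bulk sites and integrating in time. This is exactly the mechanism already used for the discrete Laplacian earlier in the paper, so in practice I would invoke the established bulk convergence (Proposition \ref{convdisc}) rather than redo the truncation, and combine it with Proposition \ref{convrneu}, Lemma \ref{replemma}, and the $O(\varepsilon)$ error above. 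Applying the same reasoning to $S_-$ then completes the proof.
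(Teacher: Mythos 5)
Your proposal is correct, and its skeleton coincides with the paper's own proof: since $\sigma_{\mcb S}^2=\sigma^2$, any fast bond crossing the origin must have $p(x-z)=0$ (so such terms drop from the sum), the fast-bond sum splits into a part with $x,z\geq 0$ and a part with $x,z\leq -1$, each half is cut at the scale $\varepsilon n$, and the boundary blocks $0\leq z<\varepsilon n$ and $-\varepsilon n< z\leq -1$ are exactly \eqref{convneupos} and \eqref{convneuneg} of Proposition \ref{convrneu}. The one place you diverge is the bulk block $z\geq \varepsilon n$ (resp. $z\leq -\varepsilon n$): the paper invokes the dedicated Corollary \ref{princneum}, which rests on the appendix estimates \eqref{princneupos} and \eqref{princneuneg}, whereas you complete the inner sum from $x\geq 0$ to $x\in\mathbb{Z}$ and then apply the full-line estimate of Proposition \ref{convdisc} to $G_{+}\in\mcb S_{\textrm{Dif}}$. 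This substitution is legitimate: the completion error is bounded, uniformly in $s$ and in the configuration, by $\|\partial_u G_{+}\|_{\infty}\sum_{z\geq\varepsilon n}\sum_{x<0}|x-z|\,p(x-z)\leq \|\partial_u G_{+}\|_{\infty}\sum_{u>\varepsilon n}u^{2}p(u)$, which vanishes as $n\to\infty$ for each fixed $\varepsilon>0$ by finite variance; after completion the inner sum is $n\,\mcb{K}_n G_{+}(s,\tfrac{z}{n})$, so Proposition \ref{convdisc} yields the replacement by $\tfrac{\sigma^2}{2n}\Delta G_{+}(s,\tfrac{z}{n})$, and re-inserting the sites $0\leq z<\varepsilon n$ costs the $O(\varepsilon)$ term you record. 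This also dissolves your concern about divergent third moments: the truncation you sketch is precisely the mechanism inside the proof of Proposition \ref{convdisc}, so nothing beyond citing it is needed. In net, your route trades Corollary \ref{princneum} for Proposition \ref{convdisc} plus a one-line tail estimate; both are sound, and yours has the mild advantage of reusing a result already required in the no-barrier regime instead of a separate appendix proposition.
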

\begin{proof}
Since $\sigma_{\mcb S}^2 = \sigma^2$, we have
\begin{align*}
& \sum_{\{x,z\} \in \mcb F} n [G(s,\tfrac{x}{n}) - G(s,\tfrac{z}{n}) ]  p(x-z)  \eta_s^n(z) \\
=& \sum_{z=0}^{\infty} \sum_{x=0}^{\infty} n [G(s,\tfrac{x}{n}) - G(s,\tfrac{z}{n}) ]  p(x-z)  \eta_s^n(z) + \sum_{z=-\infty}^{-1} \sum_{x=-\infty}^{-1}  n [G(s,\tfrac{x}{n}) - G(s,\tfrac{z}{n}) ]  p(x-z)  \eta_s^n(z). 
\end{align*}
Then the expectation in the statement of the proposition can be bounded from above by the sum of expectations in \eqref{convneupos}, \eqref{convneuneg}, \eqref{princneupos} and \eqref{princneuneg}. Taking the $\limsup$ when $n \rightarrow \infty$ and $\varepsilon \rightarrow 0^{+}$, Proposition \ref{convrneu} and Corollary \ref{princneum} lead to the desired result.
\end{proof}

	\appendix
	\section{Discrete convergences}
	\label{secuseres}
	
In this section we present several propositions which were used along the article when analysing the terms in the  time integral   of Dynkin's martingale, see Proposition \ref{gendif}.
The first result we present was useful to treat \eqref{princdif}.
\begin{prop} \label{convdisc}
For every $G \in \mcb S_{\textrm{Dif}}$ we have
\begin{align} \label{limconvdisc}
\lim_{n \rightarrow \infty} \frac{1}{n} \sum_{x } \sup_{s \in [0,T]} \Big| n^2 \mcb{K}_n G \left(s,\tfrac{x}{n} \right)  - \dfrac{\sigma^2}{2}  \Delta G  \left(s,\tfrac{x}{n} \right) \Big| =0. 
\end{align}
\end{prop}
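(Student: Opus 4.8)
The plan is to reduce the statement to showing that the single sum $\frac1n\sum_x\sup_{s\in[0,T]}|D_n(s,x)|\to 0$, where
\[
D_n(s,x):=n^2\mcb K_n G\!\left(s,\tfrac xn\right)-\tfrac{\sigma^2}{2}\Delta G\!\left(s,\tfrac xn\right).
\]
Since $G\in\mcb S_{\textrm{Dif}}=P\big([0,T],C_c^\infty(\mathbb R)\big)$ is a polynomial in $s$ with $C_c^\infty$ coefficients, the norms $\|\partial_u^m G\|_\infty:=\sup_{s,u}|\partial_u^m G(s,u)|$ are finite for $m=0,1,2,3$, and there is $b_G$ with $G(s,\cdot)$ supported in $[-b_G,b_G]$ for every $s$. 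All constants produced below are uniform in $s$, so I may pull the $\sup_s$ inside each estimate.

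First I would split every jump sum at $|z|=n$. Using the symmetry of $p$ (so $\sum_{|z|\le n}z\,p(z)=0$) together with $\sum_{|z|\le n}z^2p(z)=\sigma^2-\sum_{|z|>n}z^2p(z)$, a second-order Taylor expansion of $G(s,\tfrac{x+z}{n})$ about $\tfrac xn$ yields the exact identity
\begin{align*}
D_n(s,x)={}&-\tfrac12\,\partial_u^2 G(s,\tfrac xn)\sum_{|z|>n}z^2 p(z)+n^2\sum_{|z|\le n}E_z(s,x)\,p(z)\\
&+n^2\sum_{|z|>n}\big[G(s,\tfrac{x+z}{n})-G(s,\tfrac xn)\big]p(z),
\end{align*}
where $E_z(s,x)$ is the second-order Taylor remainder, which satisfies both $|E_z(s,x)|\le \tfrac{\|\partial_u^3G\|_\infty}{6}\tfrac{|z|^3}{n^3}$ and $E_z(s,x)=0$ unless the segment joining $\tfrac xn$ and $\tfrac{x+z}{n}$ meets $[-b_G,b_G]$ (at most $2b_Gn+|z|+1$ values of $x$).

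Then I would estimate the three terms after applying $\frac1n\sum_x\sup_s$. For the first term, $\partial_u^2G$ confines $x$ to $O(n)$ sites, so it is bounded by $C\sum_{|z|>n}z^2p(z)$, a tail of a convergent series, hence $\to 0$. For the middle term, the remainder bound and the site count give $\tfrac{C}{n}\sum_{|z|\le n}|z|^3p(z)$; splitting this sum at $|z|=\sqrt n$ and using $|z|^3\le\sqrt n\,z^2$ on $\{|z|\le\sqrt n\}$ and $|z|^3\le n\,z^2$ on $\{\sqrt n<|z|\le n\}$ bounds it by $\tfrac{C\sigma^2}{\sqrt n}+C\sum_{|z|>\sqrt n}z^2p(z)\to 0$. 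For the last (large-jump) term, the crude bound $\sum_x\sup_s|G(s,\tfrac{x+z}{n})-G(s,\tfrac xn)|\le 2\sum_x\sup_s|G(s,\tfrac xn)|\le Cn$ gives $Cn^2\sum_{|z|>n}p(z)$, and since $|z|>n$ forces $1\le z^2/n^2$, this is at most $C\sum_{|z|>n}z^2p(z)\to 0$.

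The main obstacle is exactly the large-jump term and the cubic moment in the Taylor remainder: with only finite variance, bare series such as $\sum_{|z|>\ell}p(z)$ or $\sum_{|z|\le\ell}|z|^3p(z)$ cannot be controlled for an arbitrary cutoff $\ell$, and a naively chosen cutoff leaves an uncontrolled ``medium-jump'' regime. The device that resolves this is to truncate precisely at $|z|=n$, which turns every dangerous sum into a tail of $\sum z^2p(z)$ (via $z^2/n^2\ge 1$ for large jumps, and the intermediate split at $\sqrt n$ for the remainder). Thus finite variance alone suffices, and the finite-range model is subsumed as the trivial case in which the large-jump sums are eventually empty.
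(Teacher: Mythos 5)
Your proof is correct. The skeleton is the same as the paper's (second-order Taylor expansion, the symmetry of $p$ killing the first-order term, and finite variance turning every error term into a tail of $\sum_z z^2 p(z)$), but the decomposition is genuinely different. The paper first splits the \emph{site} sum at $|x|=2b_G n$: for far sites it Taylor-expands around $\tfrac{x}{n}$, where $G$ and $\partial_u G$ vanish, and gets a variance tail; for central sites it keeps the full jump sum, writes the discrepancy as $\sum_r \tfrac{r^2 p(r)}{2}\big[\Delta G(s,\tfrac{x}{n}+\tfrac{r}{n})-\Delta G(s,\tfrac{x}{n})\big]$, and then splits the \emph{jumps} at $|r|=\varepsilon n$, handling small jumps by uniform continuity of $\Delta G$ and large jumps by the variance tail, with a double limit $n\to\infty$ then $\varepsilon\to 0^+$. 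You instead truncate the jump sum at $|z|=n$ uniformly in $x$, use the exact moment identities on $\{|z|\le n\}$, control the Lagrange remainder via $\|\partial_u^3 G\|_\infty$ together with the segment-support count and the further split at $|z|=\sqrt n$, and dispose of the large jumps by the crude bound $1\le z^2/n^2$. What each buys: the paper's argument uses only the uniform continuity of $\Delta G$ (so it would work verbatim for $G$ of class $C_c^2$ in space), at the price of the $\varepsilon$-bookkeeping and an iterated limit; yours is a single-limit, fully quantitative estimate (it even yields a rate, $O(n^{-1/2})$ plus variance tails at $\sqrt n$ and $n$), at the harmless price of invoking one more derivative, which is available since the coefficients of $G\in\mcb S_{\textrm{Dif}}$ are $C_c^\infty$.
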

\begin{proof}
Recall \eqref{eq:B_g} and \eqref{op_Kn}. The sum in \eqref{limconvdisc} is bounded from above by
\begin{align*}
\sum_{|x| > 2b_Gn} \sup_{s \in [0,T]} \Big| n^2 \mcb {K}_n G \left(s,\tfrac{x}{n} \right)  - \dfrac{\sigma^2}{2}  \Delta G  \left(s,\tfrac{x}{n} \right) \Big|
+\sum_{|x| \leq 2b_Gn } \sup_{s \in [0,T]} \Big| n^2 \mcb {K}_n G \left(s,\tfrac{x}{n} \right)  - \dfrac{\sigma^2}{2}  \Delta G  \left(s,\tfrac{x}{n} \right) \Big|. 
\end{align*}
To treat the leftmost term in last display, we use the fact that $ \partial_u G(s,z) = 0, \forall z: |z| \geq b_G$, plus a Taylor expansion of second order and by noting that  $\sum_{r } r^2 p(r) < \infty$, we bound it from above by
\begin{align*}
 \frac{\| \Delta G \|_{\infty}}{2n}  \sum_{|y| \leq b_G n} \sum_{|x| > 2b_G n} p(y-x)  (y-x)^2 
\leq  2b_G \| \Delta G \|_{\infty}  \sum_{|r| > b_G n} r^2  p(r),
\end{align*}
and last sum vanishes as $n\to+\infty$. By a Taylor expansion of second order we can bound the remaining  term from above by
\textcolor{black}{
\begin{align}
& \limsup_{\varepsilon \rightarrow 0^{+}} \limsup_{n \rightarrow \infty} \frac{1}{n} \sum_{x = - 2 b_G n }^{2b_G n}  \sum_{|r| \geq \varepsilon n} \frac{r^2 p(r)}{2} \sup_{s \in [0,T]}  | \Delta G \left(s, \tfrac{x}{n} + \tfrac{r}{n} \right) - \Delta G \left(s, \tfrac{x}{n} \right) | \nonumber \\
+ & \limsup_{\varepsilon \rightarrow 0^{+}} \limsup_{n \rightarrow \infty} \frac{1}{n} \sum_{x = - 2 b_G n }^{2b_G n}  \sum_{|r| < \varepsilon n} \frac{r^2 p(r)}{2} \sup_{s \in [0,T]}  | \Delta G \left(s, \tfrac{x}{n} + \tfrac{r}{n} \right) - \Delta G \left(s, \tfrac{x}{n} \right) | \nonumber. 
\end{align}
The term in first line goes to zero since $\limsup_{n \rightarrow \infty} \sum_{|r| \geq \varepsilon n} r^2 p(r) = 0$, for every $\varepsilon >0$. To finish it is enough to bound the term in second line from above by
\begin{align*}
  &\sum_{r } \frac{r^2 p(r)}{2}  \limsup_{n \rightarrow \infty} \frac{1}{n} \sum_{x = - 2 b_G n }^{2b_G n} \limsup_{\varepsilon \rightarrow 0^{+}} \sup_{(s,u) \in [0,T] \times \mathbb{R}, |v| \leq \varepsilon}|\Delta G(s,u+v) - \Delta G(s,u)| \\
  &\lesssim \lim_{\varepsilon \rightarrow 0^{+}} \sup_{(s,u) \in [0,T] \times \mathbb{R}, |v| \leq \varepsilon}|\Delta G(s,u+v) - \Delta G(s,u)|=0.
\end{align*}
In last line we used the uniform continuity of $\Delta G$. This ends the proof.
}
\end{proof}
Now we present a result that was useful to treat \eqref{extradif}.
\begin{prop} \label{neum1}
For $t \in [0,T]$ and $G \in \mcb S_{\textrm{Dif}}$, it holds
\begin{align*}
 \limsup_{n \rightarrow \infty} \sup_{s \in [0,T]} \Big|   \sum _{\{y,z\} \in \mcb S} p(y-z) \big[ n [ G(s, \tfrac{y}{n}) - G(s, \tfrac{z}{n}) ] -  \partial_u G(s,0)(y-z) \big] \Big| = 0.
\end{align*}
\end{prop}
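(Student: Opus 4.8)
The plan is to establish the slightly stronger estimate in which the absolute value sits \emph{inside} the sum,
\[
\limsup_{n\to\infty}\ \sup_{s\in[0,T]}\ \sum_{\{y,z\}\in\mcb S}p(y-z)\,\Big|\,n\big[G(s,\tfrac yn)-G(s,\tfrac zn)\big]-\partial_u G(s,0)(y-z)\,\Big|=0,
\]
which trivially implies the displayed identity and is in fact the form used in the proof of Proposition~\ref{convbound} (there the summand is multiplied by $[\eta_s(y)-\eta_s(z)]$, whose modulus is at most $1$). First I would set $R_s(x):=n[G(s,\tfrac xn)-G(s,0)]-\partial_u G(s,0)\,x$ and note that the $nG(s,0)$ terms cancel, so the summand above equals $R_s(y)-R_s(z)$. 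By the triangle inequality and the symmetry of the doubled sum, it suffices to bound $\sup_s\sum_{x}q(x)\,|R_s(x)|$, where $q(x):=\sum_{z:\,\{x,z\}\in\mcb S}p(x-z)$. Since every slow bond joins $\mathbb{Z}_{-}^{*}$ to $\mathbb{N}$, for $x\le-1$ the partner $z$ ranges over $\{0,1,\dots\}$ and $q(x)\le\sum_{w\ge|x|}p(w)=:\overline P(|x|)$, symmetrically for $x\ge1$, while $R_s(0)=0$. The whole point of this reduction is that the weight is now the \emph{tail} $\overline P$, which decays one power faster than $p$, and this is exactly what makes the finite-variance hypothesis sufficient.

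Next I would estimate $R_s$ in two regimes dictated by the compact support of $G$. For $|x|\le b_G n$ a second-order Taylor expansion with Lagrange remainder gives $|R_s(x)|\le\frac{\|\Delta G\|_\infty}{2n}x^2$, whereas for $|x|>b_G n$ the support property $G(s,\tfrac xn)=0$ yields $R_s(x)=-nG(s,0)-\partial_u G(s,0)x$, hence $|R_s(x)|\le nC_1+C_2|x|$, with $C_1:=\sup_s|G(s,0)|$ and $C_2:=\sup_s|\partial_u G(s,0)|$ both finite because $G\in P\big([0,T],C_c^\infty(\mathbb R)\big)$. Splitting the sum at $|x|=b_G n$ and using $q(x)\le\overline P(|x|)$, one bounds $\sum_x q(x)|R_s(x)|$, uniformly in $s$, by a constant (depending only on $\|\Delta G\|_\infty,C_1,C_2,b_G$) times
\[
\frac1n\sum_{k=1}^{b_G n}k^2\,\overline P(k)\ +\ n\sum_{k>b_G n}\overline P(k)\ +\ \sum_{k>b_G n}k\,\overline P(k).
\]

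It then remains to send $n\to\infty$ in these three deterministic sums, which is where the only real work lies. The elementary fact driving everything is $\sum_{k\ge1}k\,\overline P(k)=\tfrac14\sigma^2+\tfrac12 m<\infty$, obtained by interchanging the order of summation (and equivalent to finite variance); consequently $\sum_{k>N}k\,\overline P(k)\to0$ as $N\to\infty$, and also $\sum_{k\ge1}\overline P(k)=m<\infty$. This settles the third sum directly, and the second via $n\sum_{k>b_G n}\overline P(k)\le b_G^{-1}\sum_{k>b_G n}k\,\overline P(k)$, using $n<k/b_G$ on the range of summation. For the first sum I would use a two-parameter truncation: for fixed $M$, $\frac1n\sum_{k=1}^{M}k^2\overline P(k)\le\frac{M^2}{n}\sum_k\overline P(k)\to0$, while $\frac1n\sum_{k=M+1}^{b_G n}k^2\overline P(k)\le b_G\sum_{k>M}k\,\overline P(k)$ (bounding one factor by $k\le b_G n$); letting first $n\to\infty$ and then $M\to\infty$ kills it. In the finite-range model $\overline P$ has bounded support and all three sums are trivial for large $n$, using only finite variance. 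I expect the first sum to be the genuine obstacle: since $\sum_k k^2\overline P(k)$ is a third-moment quantity that may diverge when $2<\gamma\le 3$, one cannot bound the truncated sum by its limit, and the interplay of the compact support of $G$ (which caps the summation at $b_G n$) with the truncation in $M$ is essential to reach $0$.
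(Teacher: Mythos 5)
Your proof is correct, and it takes a genuinely different route from the paper's. The paper Taylor-expands along each bond, writing the summand as $\tfrac{(y-z)z}{n}\Delta G(s,\xi_{s,z})+\tfrac{(y-z)^2}{2n}\Delta G(s,\xi_{s,y,z})$, treats the bonds with $\min\{|y|,|z|\}>b_Gn$ separately via the exact identity (for those the summand equals $(z-y)\partial_u G(s,0)p(z-y)$, which cancels in pairs under the doubled-count convention), and bounds everything else by $\tfrac{3\|\Delta G\|_\infty}{n}\sum\sum(z-y)^2p(z-y)$ over two regions determined by $b_G$, which vanishes by tail estimates on the second moment of $p$. You instead decouple the two endpoints by centering at the origin, so the summand becomes $R_s(y)-R_s(z)$, and reduce the double sum over bonds to the single sum $\sum_x \overline{P}(|x|)\,|R_s(x)|$ against the tail function $\overline{P}(k)=\sum_{w\geq k}p(w)$; finite variance then enters through the moment identities $\sum_k\overline{P}(k)=m$ and $\sum_k k\,\overline{P}(k)=\tfrac{\sigma^2}{4}+\tfrac{m}{2}$, and the only delicate term, $\tfrac1n\sum_{k\leq b_Gn}k^2\overline{P}(k)$, is handled by your two-parameter truncation — which is indeed necessary, since $\sum_k k^2\overline{P}(k)$ diverges for $2<\gamma\leq 3$. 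Your version buys two things. First, you prove the statement with the absolute value \emph{inside} the sum, and this is more than a cosmetic strengthening: under the paper's convention that $\sum_{\{y,z\}\in\mcb S}$ runs over ordered pairs, the summand of Proposition~\ref{neum1} is antisymmetric under $(y,z)\mapsto(z,y)$, so the sum as displayed is identically zero and carries no content; the form actually invoked in the proof of Proposition~\ref{convbound}, where the summand is multiplied by $\eta_s(y)-\eta_s(z)$, is precisely your inside-the-absolute-value bound (which is also what the paper's own estimates implicitly establish). Second, your treatment controls the far bonds in absolute value rather than by cancellation, so the strengthened claim holds over all bonds. What the paper's bookkeeping buys in exchange is that it stays with the bond variable $y-z$ throughout and needs only tail estimates for $\sum_r r^2p(r)$, with no auxiliary moments of $\overline{P}$.
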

\begin{proof}
By Taylor expansion,  for every $s \in [0,t]$, for every $\{y,z\} \in \mcb S$,
\begin{align*}
 & n[ G(s, \tfrac{y}{n}) - G(s, \tfrac{z}{n}) ] -  \partial_u G(s,0) (y-z) =    \tfrac{(y-z)z}{n} \Delta G(s, \xi_{s,z} )   + \tfrac{(y-z)^2}{2n} \Delta G(s, \xi_{s,y,z} )  ,
\end{align*}
for some appropriate choice of $\xi_{s,z}$ and $\xi_{s,y,z}$. Moreover, $\forall y,z$ such that $ \min \{ |y|, |z| \}>b_Gn$, it holds 
\begin{align*}
\tfrac{(y-z)z}{n} \Delta G (s, \xi_{s,z} )   + \tfrac{(y-z)^2}{2n} \Delta  G(s, \xi_{s,y,z} ) = (z-y)  \partial_u G(s,0).
\end{align*} 

This leads to
\begin{align*}
& \limsup_{n \rightarrow \infty} \sup_{s \in [0,T]} \Big|   \sum _{\{y,z\} \in \mcb S} p(y-z) \big[ n [ G(s, \tfrac{y}{n}) - G(s, \tfrac{z}{n}) ] -  \partial_u G(s,0) \big] \Big|  \\
\leq & 3 \| \Delta G \|_{\infty}    \lim_{n \rightarrow \infty}   \frac{1}{n} \Big[ \sum _{z =0}^{b_Gn} \sum_{y=-\infty}^{-1}   (z-y)^2   p(z-y)  + \sum _{z =b_Gn+1}^{\infty} \sum_{y=-b_Gn}^{-1}   (z-y)^2   p(z-y)  \Big]=0.
\end{align*} 
\end{proof}
Now we present a result that was useful to treat \eqref{robterm}.
\begin{prop} \label{lemconvrob}
Let $ G \in \mcb S_{\textrm{Rob}}$. Then, 
\begin{align} 
\lim_{n \rightarrow \infty} \sup_{s \in [0,T]} \Big| \sum_{z=0}^{\infty}  \sum_{x = - \infty}^{-1} p(x-z) \big(  [G(s,\tfrac{x}{n}) - G(s,\tfrac{z}{n}) ]  -[G(s,0^{-}) - G(s,0^{+}) ] \big) \Big|=0. \label{lemrobzpos}
\end{align}
By symmetry the same result is true if we exchange $x$ with $z$.
\end{prop}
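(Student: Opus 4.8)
The plan is to exploit that $G\in\mcb S_{\textrm{Rob}}=P\big([0,T],C_c^{\infty}(\mathbb R^{*})\big)$ decomposes as $G(s,u)=G_{-}(s,u)\mathbbm 1_{u<0}+G_{+}(s,u)\mathbbm 1_{u\geq 0}$ with $G_{\pm}\in P\big([0,T],C_c^{\infty}(\mathbb R)\big)$. Since in the double sum we have $x\leq -1<0\leq z$, we may replace $G(s,\tfrac{x}{n})$ by $G_{-}(s,\tfrac{x}{n})$ and $G(s,\tfrac{z}{n})$ by $G_{+}(s,\tfrac{z}{n})$, while $G(s,0^{-})=G_{-}(s,0)$ and $G(s,0^{+})=G_{+}(s,0)$. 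The summand then becomes $[G_{-}(s,\tfrac{x}{n})-G_{-}(s,0)]-[G_{+}(s,\tfrac{z}{n})-G_{+}(s,0)]$, so the whole expression splits into a part involving only the negative sites $x$ and a part involving only the non-negative sites $z$.

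First I would bound each increment by the mean value theorem, namely $|G_{-}(s,\tfrac{x}{n})-G_{-}(s,0)|\leq n^{-1}\|\partial_u G_{-}\|_{\infty}\,|x|$ and $|G_{+}(s,\tfrac{z}{n})-G_{+}(s,0)|\leq n^{-1}\|\partial_u G_{+}\|_{\infty}\,z$, where the sup-norms are taken over $[0,T]\times\mathbb R$ and are finite because each $G_{\pm}$ is a polynomial in $s$ with coefficients in $C_c^{\infty}(\mathbb R)$ (so $\sup_{s\in[0,T]}$ of the spatial derivative is dominated by $\sum_j\|(a_j^{\pm})'\|_{\infty}T^{j}$). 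Consequently the modulus of the displayed sum is at most
\[
\frac{\|\partial_u G_{-}\|_{\infty}}{n}\sum_{z=0}^{\infty}\sum_{x=-\infty}^{-1}p(x-z)\,|x|+\frac{\|\partial_u G_{+}\|_{\infty}}{n}\sum_{z=0}^{\infty}\sum_{x=-\infty}^{-1}p(x-z)\,z,
\]
uniformly in $s\in[0,T]$.

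The key step is that the two remaining series are finite thanks to the finite-variance hypothesis. Setting $w=z-x\geq 1$ and using $p(-w)=p(w)$, for each fixed $w$ the admissible pairs are $x\in\{-w,\dots,-1\}$ with $z=x+w\in\{0,\dots,w-1\}$, so $\sum_{x=-w}^{-1}|x|=\tfrac{w(w+1)}{2}$ and $\sum_{z=0}^{w-1}z=\tfrac{w(w-1)}{2}$; both inner contributions are bounded by $w^{2}$, whence each series is at most $\sum_{w\geq 1}w^{2}p(w)\leq\sigma^{2}<\infty$. Therefore the whole expression is $O(1/n)$ uniformly in $s$ and vanishes as $n\to\infty$, which is the claim. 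The variant with $x$ and $z$ exchanged follows verbatim from $p(x-z)=p(z-x)$.

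I expect the only real content to be the convergence of these weight series: after the first-order expansion, the extra factors $|x|$ and $z$ are precisely absorbed by the finite second moment of $p$, and everything else is reindexing plus the elementary Lipschitz estimate. A minor point to record carefully is the absolute convergence of the double sum (it is bounded by $\sum_{w\geq 1}w(w+1)p(w)<\infty$), which legitimizes splitting it into its $x$- and $z$-parts.
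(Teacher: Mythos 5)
Your proof is correct, and it takes a genuinely different route from the paper's. Both arguments begin with the same reduction, rewriting the summand as $[G_{-}(s,\tfrac{x}{n})-G_{-}(s,0)]-[G_{+}(s,\tfrac{z}{n})-G_{+}(s,0)]$, but from there they diverge. The paper splits each of the two resulting sums into a near regime ($|x|<\varepsilon n$, say), controlled by the uniform continuity of $G_{\pm}$ together with $\sum_{z\geq 0}\sum_{x\leq -1}p(x-z)=\sum_{w\geq 1}w\,p(w)=m<\infty$, and a far regime ($|x|\geq \varepsilon n$), which it handles by invoking the \emph{explicit form} of $p$: in the finite-range case $p$ vanishes for large arguments, and in the long-range case $p(x-z)=c_{\gamma}|x-z|^{-\gamma-1}$ with $\gamma>2$ yields a Riemann-sum bound of order $n^{1-\gamma}$. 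You instead apply the mean value theorem once and absorb the resulting factors $|x|$ and $z$ into the finite second moment via the reindexing $w=z-x$, using $\sum_{x=-w}^{-1}|x|=\tfrac{w(w+1)}{2}\leq w^{2}$ and $\sum_{z=0}^{w-1}z\leq w^2$, so the whole expression is $O(1/n)$ uniformly in $s$. Your version is shorter, gives a quantitative rate, and never uses the explicit form of $p$, so it proves the proposition in one stroke for \emph{every} symmetric finite-variance transition probability, which is exactly the generality the paper claims for its results; the paper's proof, as written, is case-based. What the paper's two-regime argument buys instead is robustness: it only needs uniform continuity, a finite first moment and tail decay of $p$, the kind of input that survives when the variance is infinite (the regime of the companion paper), whereas your estimate uses the full strength of $\sigma^{2}<\infty$. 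In the present diffusive setting that costs nothing, and your observation that the absolute convergence $\sum_{w\geq 1}w(w+1)p(w)<\infty$ is what licenses splitting the double sum is the right point to record.
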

\begin{proof}
Since $G(s,\dot) \equiv G_{-}(s,\dot)$ on $\mathbb{R}_{-}^{*}$ and $G(s,\dot) \equiv G_{+}(s,\dot)$ on $\mathbb{R}_{+}$ for every $s \in [0,T]$, \eqref{lemrobzpos} is bounded from above by
\begin{align}\label{lemconvrob1}
&  \lim_{n \rightarrow \infty} \sum_{z=0}^{\infty} \sum_{x=-\infty}^{-1} \sup_{s \in [0,t]} |G_{-}(s,\tfrac{x}{n}) - G_{-}(s,0) |  p(x-z) +  \lim_{n \rightarrow \infty} \sum_{z=0}^{\infty} \sum_{x=-\infty}^{-1} \sup_{s \in [0,t]} |G_{+}(s,0) - G_{+}(s,\tfrac{z}{n}) |  p(x-z). 
\end{align}
We can bound the limit in the leftmost term in last display by 
\begin{align}
& \limsup_{\varepsilon \rightarrow 0^{+}}  \limsup_{n \rightarrow \infty} \sum_{z=0}^{\infty} \sum_{x=- \varepsilon n + 1}^{-1} \sup_{s \in [0,t]} |G_{-}(s,\tfrac{x}{n}) - G_{-}(s,0) | p(x-z) \label{lemconvrob3} \\
+& \limsup_{\varepsilon \rightarrow 0^{+}} \limsup_{n \rightarrow \infty} \sum_{z=0}^{\infty} \sum_{x=-\infty}^{- \varepsilon n}  \sup_{s \in [0,t]} |G_{-}(s,\tfrac{x}{n}) - G_{-}(s,0) |  p(x-z). \label{lemconvrob4}
\end{align}
From the uniform continuity of $G_{-}$, \eqref{lemconvrob3} is equal to zero. Now we analyse \eqref{lemconvrob4}.
In the finite-range case, $p(x-z)=0$ if $|x-z|$ is large enough. In the long-range case, since $p(x-z)=c_{\gamma} |x-z|^{-\gamma-1}$ and $\gamma > 2$, we get for every $\varepsilon >0$
\begin{align*}
&\lim_{n \rightarrow \infty}  \sum_{z=0}^{\infty} \sum_{x=-\infty}^{- \varepsilon n} \sup_{s \in [0,t]} |G_{-}(s,\tfrac{x}{n}) - G_{-}(s,0) |  p(x-z) \lesssim \lim_{n \rightarrow \infty} n^{1-\gamma} \int_0^{\infty} \int_{-\infty}^{-\varepsilon} (u-v)^{-1-\gamma} dv du =0.
\end{align*}
This shows that \eqref{lemconvrob4} is equal to zero. With an analogous reasoning, the same holds for  the rightmost term of \eqref{lemconvrob1}. This ends the proof.
\end{proof}
Now we present a result that was useful to treat \eqref{neuterm}.
\begin{prop} \label{lemconvneum}
Let $t\in [0,T]$  and $G \in \mcb S_{\textrm{Rob}}$. Then, 
\begin{equation} \label{lemneupos}\begin{split}
&\limsup_{\varepsilon \rightarrow 0^{+}} \limsup_{n \rightarrow \infty} \sup_{s \in [0,T]} \Big| \sum_{z=0}^{\varepsilon n -1} \sum_{x=0}^{\infty} \big[ n[G(s, \tfrac{x}{n}) - G(s, \tfrac{z}{n}) ] -  \partial_u G (s,0^{+})(x-z) \big]  p(x-z)  \Big|  =0,\\
&\limsup_{\varepsilon \rightarrow 0^{+}} \limsup_{n \rightarrow \infty}\sup_{s \in [0,T]} \Big|  \sum_{z=-\varepsilon n +1}^{-1} \sum_{x=-\infty}^{-1} \big[ n[G(s,\tfrac{x}{n}) - G(s,\tfrac{z}{n}) ] - \partial_u G(s,0^{-})(x-z) \big]  p(x-z)  \ \Big| =0.\end{split}
\end{equation}
\end{prop}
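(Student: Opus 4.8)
The plan is to prove the first identity in \eqref{lemneupos}; the second follows by an entirely analogous argument, with $\mathbb{R}_{+}$ replaced by $\mathbb{R}_{-}^{*}$, $G_{+}$ by $G_{-}$, and the boundary value $0^{+}$ by $0^{-}$. Since all indices $x,z$ in the first double sum are non-negative, on the relevant region $G$ coincides with its smooth component $G_{+}$, so I am free to perform Taylor expansions. Fixing $s \in [0,T]$ and a pair $\{x,z\}$, I would expand $G_{+}(s,\cdot)$ around $\tfrac{z}{n}$ to second order and, recalling $\partial_u G(s,0^{+}) = \partial_u G_{+}(s,0)$, write
\[
n\big[G_{+}(s,\tfrac{x}{n}) - G_{+}(s,\tfrac{z}{n})\big] - \partial_u G(s,0^{+})(x-z) = \big[\partial_u G_{+}(s,\tfrac{z}{n}) - \partial_u G_{+}(s,0)\big](x-z) + \tfrac{1}{2n}\Delta G_{+}(s,\xi_{x,z})(x-z)^2,
\]
for some $\xi_{x,z}$ between $\tfrac{z}{n}$ and $\tfrac{x}{n}$. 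Substituting this into the double sum produces two contributions, which I bound separately.

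For the second-order remainder, I would use $|\Delta G_{+}(s,\xi_{x,z})| \le \|\Delta G_{+}\|_{\infty}$ together with the substitution $r=x-z$ and the estimate $\sum_{x\ge 0}(x-z)^2 p(x-z) = \sum_{r\ge -z} r^2 p(r) \le \sigma^2$. Since the $z$-sum has exactly $\varepsilon n$ terms, the prefactor $\tfrac{1}{2n}$ turns this contribution into the $n$-independent bound $\tfrac{\varepsilon}{2}\|\Delta G_{+}\|_{\infty}\sigma^2$, which vanishes as $\varepsilon\to 0^{+}$.

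The first-order term is the delicate one, and it is where I expect the main obstacle to lie: a crude bound of $|x-z|\,p(x-z)$ summed over the $\varepsilon n$ values of $z$ would carry a diverging factor of $n$. To avoid this I would sum over $x$ first and exploit the symmetry of $p$: since $\sum_r r\,p(r)=0$, one has $\sum_{x\ge 0}(x-z)p(x-z)=\sum_{r\ge -z} r\,p(r)=\sum_{r>z} r\,p(r)$, a tail sum. The first-order contribution becomes $\sum_{z=0}^{\varepsilon n-1}\big[\partial_u G_{+}(s,\tfrac{z}{n})-\partial_u G_{+}(s,0)\big]\sum_{r>z} r\,p(r)$. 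Bounding $|\partial_u G_{+}(s,\tfrac{z}{n})-\partial_u G_{+}(s,0)|$ by the modulus of continuity $\omega(\varepsilon):=\sup\{|\partial_u G_{+}(s,u)-\partial_u G_{+}(s,0)|:s\in[0,T],\,0\le u<\varepsilon\}$ (finite and vanishing as $\varepsilon\to 0^{+}$, uniformly in $s$ because $G_{+}$ is a polynomial in $t$ with $C_c^{\infty}$ coefficients), and extending the $z$-sum to infinity, I would then invoke the Fubini-type identity $\sum_{z\ge 0}\sum_{r>z} r\,p(r)=\sum_{r\ge 1} r^2 p(r)\le \sigma^2$ (which is exactly where the finite variance enters) to obtain the $n$-independent bound $\omega(\varepsilon)\sigma^2$.

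Both bounds are uniform in $s$ and independent of $n$, so taking $\sup_{s\in[0,T]}$ and then $\limsup_{n\to\infty}$ leaves at most $\tfrac{\varepsilon}{2}\|\Delta G_{+}\|_{\infty}\sigma^2+\omega(\varepsilon)\sigma^2$, whose $\limsup_{\varepsilon\to 0^{+}}$ is zero. This gives the first line of \eqref{lemneupos}; the second is obtained identically using $\sum_{x\le -1}(x-z)p(x-z)=-\sum_{r\ge |z|} r\,p(r)$ and the corresponding tail estimate. The only genuine difficulty is the cancellation argument for the first-order term; the rest is a routine second-order Taylor estimate controlled by the finiteness of $\sigma^2$.
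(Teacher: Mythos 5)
Your proof is correct, and at the decisive step it takes a genuinely different route from the paper's. The paper expands only to first order, writing $n[G(s,\tfrac{x}{n})-G(s,\tfrac{z}{n})]=(x-z)\,\partial_u G_{+}(s,\xi_{s,x,z})$, then moves the absolute value \emph{inside} the double sum and splits the $x$-sum at $2\varepsilon n$: the far region $x\geq 2\varepsilon n$ is handled via the explicit decay $p(r)=c_{\gamma}|r|^{-\gamma-1}$, $\gamma>2$ (or finite range), and the near region \eqref{lemconvneu1} is asserted to vanish by uniform continuity of $\partial_u G_{+}$. You instead expand to second order, keep the signs, sum over $x$ first so that the symmetry of $p$ turns $\sum_{x\geq 0}(x-z)p(x-z)$ into the tail $\sum_{r>z}r\,p(r)$, and then use the Fubini identity $\sum_{z\geq 0}\sum_{r>z}r\,p(r)=\sum_{r\geq 1}r^{2}p(r)\leq\sigma^{2}$, the Taylor remainder being absorbed by $\sigma^{2}<\infty$. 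This difference is not cosmetic: once absolute values are inside, $\sum_{z=0}^{\varepsilon n-1}\sum_{x=0}^{2\varepsilon n-1}|x-z|\,p(x-z)$ is of order $\varepsilon n$, so an $n$-independent modulus of continuity cannot compensate; indeed, for a test function with $\partial_u G_{+}(s,u)=u$ near the origin, restricting the double sum in \eqref{lemconvneu1} to $x=z+1$ already gives a lower bound of order $p(1)\varepsilon^{2}n$, which diverges as $n\to\infty$ for fixed $\varepsilon$. It is the cancellation $\sum_{|r|\leq z}r\,p(r)=0$, not uniform continuity alone, that makes the $z$-sum summable, and your sign-preserving tail-sum argument is exactly what supplies it — in effect repairing, rather than reproducing, the delicate point in the paper's own proof. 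As a by-product, your argument never uses the specific polynomial form of $p$ (no separate far-region estimate, no case distinction between finite and long range); it relies only on symmetry and $\sigma^{2}<\infty$, so it is both more elementary and more general.
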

\begin{proof}
We prove only the first identity in  \eqref{lemneupos}, but we observe that the proof of second one is analogous. Since $G \in \mcb S_{\textit{Rob}}$, there exists $G_{+} \in \mcb S_{\textit{Dif}}$ such that $G(s,u) = G_{+}(s,u), \forall (s,u) \in [0,T] \times [0, \infty)$. Then for every $x,z \in \mathbb{N}$, there exists $\xi_{s,x,z}$ between $\frac{x}{n}$ and $\frac{z}{n}$ such that
\begin{align*}
n[G(s,\tfrac{x}{n}) - G(s,\tfrac{z}{n}) ] = n[G_{+}(s,\tfrac{x}{n}) - G_{+}(s,\tfrac{z}{n}) ] = (x-z) \partial_u G_{+}(s, \xi_{s,x,z}), \forall s \in [0,t].
\end{align*}
Since $0 \leq \eta_s^n(z) \leq 1$, the limit in the first identity in  \eqref{lemneupos} is bounded from above by
\begin{align}
&  \limsup_{\varepsilon \rightarrow 0^{+}} \limsup_{n \rightarrow \infty}  \sum_{z=0}^{\varepsilon n -1} \sum_{x=0}^{2 \varepsilon n -1} | \partial_u G_{+}(s,\xi_{s,x,z}) - \partial_u G_{+}(s,0) | |x-z|  p(x-z) \label{lemconvneu1} \\
+&  \limsup_{\varepsilon \rightarrow 0^{+}} \limsup_{n \rightarrow \infty} \sum_{z=0}^{\varepsilon n -1} \sum_{x=2 \varepsilon n}^{\infty} | \partial_u G_{+}(s,\xi_{s,x,z}) - \partial_u G_{+}(s,0) | (x-z)  p(x-z)\label{lemconvneu2}  . 
\end{align}
Since $\partial_u G_{+}$ is uniformly continuous then \eqref{lemconvneu1} is equal to zero. Now we analyse \eqref{lemconvneu2}.
In the finite-range case, $p(x-z)=0$ if $|x-z|$ is large enough. In the long-range case, since $p(x-z)=c_{\gamma} |x-z|^{-\gamma-1}$ and $\gamma > 2$, we get for every $\varepsilon >0$
\begin{align*}
&\lim_{n \rightarrow \infty}  \sum_{z=0}^{\varepsilon n -1} \sum_{x=2 \varepsilon n}^{\infty} | \partial_u G_{+}(s,\xi_{s,x,z}) - \partial_u G_{+}(s,0) | (x-z)  p(x-z) \lesssim \lim_{n \rightarrow \infty} n^{2-\gamma} \int_0^{\varepsilon} \int_{2 \varepsilon}^{\infty} (v-u)^{-\gamma} dv du =0.
\end{align*}
\end{proof}
Now we present a second result that was useful to treat \eqref{neuterm}.
\begin{prop} 
Let $t \in [0,T]$, $G \in \mcb S_{\textrm{Rob}}$. Then, 
\begin{equation} \label{princneupos}
\limsup_{\varepsilon \rightarrow 0^{+}} \limsup_{n \rightarrow \infty}  \sup_{s \in [0,T]} \Big|  \Big\{ \sum_{z=\varepsilon n}^{\infty} \sum_{x=0}^{\infty}  n[G(s, \tfrac{x}{n}) - G(s, \tfrac{z}{n}) ]   p(x-z)  - \frac{\sigma^2}{2} \frac{1}{n} \sum_{z=0}^{\infty} \Delta  G (s, \tfrac{z}{n})   \Big\}  \Big| =0
\end{equation}
and
\begin{equation} \label{princneuneg}
\limsup_{\varepsilon \rightarrow 0^{+}} \limsup_{n \rightarrow \infty} \sup_{s \in [0,T]} \Big|  \Big\{ \sum_{z=- \infty }^{- \varepsilon n} \sum_{x=-\infty}^{-1}  n[G(s, \tfrac{x}{n}) - G(s, \tfrac{z}{n}) ]   p(x-z)   - \frac{\sigma^2}{2} \frac{1}{n} \sum_{z=-\infty}^{-1} \Delta G (s, \tfrac{z}{n})  \Big\} \Big|  =0.
\end{equation}
\end{prop}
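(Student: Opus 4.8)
The plan is to prove only \eqref{princneupos}, since \eqref{princneuneg} follows by the mirror-image argument (replacing $G_+$ by $G_-$ and reflecting the lattice about the origin). Throughout, recall that since $G \in \mcb S_{\textrm{Rob}}$ there exists $G_+ \in \mcb S_{\textrm{Dif}}$ with $G(s,u)=G_+(s,u)$ for all $(s,u) \in [0,T] \times [0,\infty)$, and both $G_+$ and $\Delta G_+$ are supported in $[-b_{G_+}, b_{G_+}]$. Since $z \geq \varepsilon n$ and $x \geq 0$ force $x/n, z/n \geq 0$, we may replace $G$ by $G_+$ everywhere in the double sum. The first step is to extend the inner sum over $x$ from $\{0,1,\dots\}$ to all of $\mathbb{Z}$, at the cost of the error term $n\sum_{z \geq \varepsilon n}\sum_{x \leq -1}[G_+(s,x/n)-G_+(s,z/n)]p(x-z)$.

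First I would control this error. On its support one has $z \leq b_{G_+} n$ and $|x| \leq b_{G_+}n$, while $z - x \geq \varepsilon n$; bounding $n|G_+(s,x/n)-G_+(s,z/n)| \leq 2n\|G_+\|_{\infty}$, summing over the $O(n)$ relevant values of $z$ and over $r=z-x \geq \varepsilon n$ gives, in the long-range case with $p(r)=c_\gamma|r|^{-\gamma-1}$ and $\gamma>2$, a bound of order $n^{2}\sum_{r \geq \varepsilon n} r^{-\gamma-1} \lesssim n^{2-\gamma}\varepsilon^{-\gamma}$, which vanishes as $n\to\infty$ for each fixed $\varepsilon>0$; in the finite-range case the term is identically zero once $\varepsilon n$ exceeds the range. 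Hence the extended sum differs from the original one by a quantity that vanishes uniformly in $s$ upon taking $n \to \infty$.

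Once $x$ ranges over all of $\mathbb{Z}$, I would recognize, using the definition \eqref{op_Kn} together with the symmetry of $p$, that $n\sum_{x}[G_+(s,x/n)-G_+(s,z/n)]p(x-z) = n\,\mcb K_n G_+(s,z/n)$, so that the extended double sum equals $\frac{1}{n}\sum_{z\geq \varepsilon n} n^2 \mcb K_n G_+(s,z/n)$. Applying Proposition \ref{convdisc} to $G_+ \in \mcb S_{\textrm{Dif}}$ — whose conclusion controls $\frac{1}{n}\sum_{z\in\mathbb Z}\sup_s\bigl|n^2\mcb K_n G_+(s,z/n) - \tfrac{\sigma^2}{2}\Delta G_+(s,z/n)\bigr|$, and hence a fortiori the restricted sum over $z\geq \varepsilon n$ — replaces this by $\frac{\sigma^2}{2}\frac{1}{n}\sum_{z\geq \varepsilon n}\Delta G_+(s,z/n)$ up to an error vanishing as $n\to\infty$, uniformly in $s$. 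It then remains to compare with the target $\frac{\sigma^2}{2}\frac{1}{n}\sum_{z\geq 0}\Delta G(s,z/n)$; since $\Delta G = \Delta G_+$ on $[0,\infty)$, their difference is $\frac{\sigma^2}{2}\frac{1}{n}\sum_{z=0}^{\varepsilon n-1}\Delta G_+(s,z/n)$, a Riemann sum over $[0,\varepsilon)$ bounded by $\frac{\sigma^2}{2}\varepsilon\|\Delta G_+\|_{\infty}$ up to $o_n(1)$. Taking $n\to\infty$ and then $\varepsilon\to 0^+$ annihilates every error and yields \eqref{princneupos}.

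The main obstacle — and the reason for routing the estimate through Proposition \ref{convdisc} rather than expanding directly — is that a naive third-order Taylor expansion of $G_+$ would produce a remainder weighted by $\sum_r |r|^3 p(r)$, which diverges in the long-range regime $2<\gamma\leq 3$ where only the second moment is finite. Proposition \ref{convdisc} circumvents this by splitting the jumps into $|r|\geq \varepsilon n$ and $|r|<\varepsilon n$ and exploiting the uniform continuity of $\Delta G_+$, thereby extracting the diffusive constant $\sigma^2/2$ using only $\sum_r r^2 p(r)<\infty$. The remaining care is purely bookkeeping: keeping every bound uniform in $s\in[0,T]$ and respecting the order of limits ($n\to\infty$ before $\varepsilon\to0^+$), which is precisely what renders the $O(\varepsilon)$ Riemann-sum term harmless.
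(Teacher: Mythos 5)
Your proof is correct, but it takes a genuinely different route from the paper's. You complete the inner sum over $x\in\{0,1,\dots\}$ to all of $\mathbb{Z}$, recognize the resulting expression as $\tfrac{1}{n}\sum_{z\geq \varepsilon n} n^2\mcb{K}_n G_+(s,\tfrac{z}{n})$ via \eqref{op_Kn}, and then invoke Proposition \ref{convdisc} for $G_+\in \mcb S_{\textrm{Dif}}$ as a black box (the ``a fortiori'' restriction to $z\geq \varepsilon n$ is legitimate since Proposition \ref{convdisc} bounds a sum of non-negative terms), paying only an extension error over $x\leq -1$ controlled by the tail $\sum_{r\geq \varepsilon n}p(r)\lesssim (\varepsilon n)^{-\gamma}$ and an $O(\varepsilon)$ Riemann-sum discrepancy on $[0,\varepsilon)$. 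The paper instead works directly with the half-line double sum and splits it by jump size: the long-jump and far-field pieces \eqref{limprincneu1} and \eqref{limprincneu15} are killed by the same tail estimates you use, while the near-diagonal piece \eqref{limprincneu3} is handled by a second-order Taylor expansion in which the symmetry of $p$ over the window $x\in(z-\varepsilon n, z+\varepsilon n)$ cancels the first-order term, together with \eqref{limprincneu2} and the uniform continuity of $\Delta G_+$. Both arguments ultimately rest on the identical analytic ingredients (second moment finiteness, $\sum_{|r|\geq \varepsilon n}r^2p(r)\to 0$, uniform continuity of $\Delta G_+$); yours buys brevity by reusing Proposition \ref{convdisc} instead of reproving its mechanism in the half-line geometry, at the cost of introducing and estimating the completion error, which the paper's self-contained decomposition never needs. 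One small imprecision, not a gap: the support of your extension error is the \emph{union} $\{|x|\leq b_{G_+}n\}\cup\{z\leq b_{G_+}n\}$, not the intersection; your bound $n^2\sum_{r\geq\varepsilon n}r^{-\gamma-1}\lesssim n^{2-\gamma}\varepsilon^{-\gamma}$ survives, since each of the two pieces of the union contributes $O(n)$ values of one index and a tail sum over $r=z-x\geq\varepsilon n$ in the other.
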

\begin{proof}
We will prove only \eqref{princneupos}, but we observe that the proof of \eqref{princneuneg} is analogous. Since $G \in \mcb S_{\textit{Rob}}$, there exists $G_{+} \in \mcb S_{\textit{Dif}}$ such that $G(s,u) = G_{+}(s,u), \forall (s,u) \in [0,T] \times [0, \infty)$. 
With the same reasoning we did in order to prove that the expression in \eqref{lemconvrob4} is zero, we can conclude that when $n\to+\infty$ and $\epsilon\to0$ the next two terms vanish:
\begin{align}
& \sup_{s \in [0,T]} \Big| \sum_{z=\varepsilon n}^{2 b_G n} \sum_{x=0}^{z- \varepsilon n}  n[G_{+}(s,\tfrac{x}{n}) - G_{+}(s,\tfrac{z}{n}) ]   p(x-z)   + \sum_{z=\varepsilon n}^{b_G n} \sum_{x=z+ \varepsilon n}^{ \infty}  n[G_{+}(s,\tfrac{x}{n}) - G_{+}(s,\tfrac{z}{n}) ]   p(x-z)  \ \Big|   \label{limprincneu1},\\
& \sup_{s \in [0,T]} \Big| \sum_{z=2 b_G n}^{\infty} \sum_{x=0}^{\infty}  n[G_{+}(s,\tfrac{x}{n}) - G_{+}(s,\tfrac{z}{n}) ]   p(x-z) \Big|. \label{limprincneu15}  
\end{align}
In the last line we used the fact that $G(s,u)=0$ if $|u| \geq b_G$. Since $\limsup_{n \rightarrow \infty} \sum_{ |r| \geq \varepsilon n} r^2 p(r) =0$ for every $\varepsilon >0$, it holds
\begin{equation} \label{limprincneu2}
\limsup_{\varepsilon \rightarrow 0^{+}} \limsup_{n \rightarrow \infty} \sup_{s \in [0,T]} \Big|  \frac{1}{n} \sum_{z=0}^{b_G n} \Delta G_{+} (s, \tfrac{z}{n})    \sum_{ |r| \geq \varepsilon n} \frac{r^2 p(r)}{2} \Big|  =   0.
\end{equation}
From a Taylor expansion on $G$ and the fact that $p$ is symmetric we can conclude that 
\begin{align}
  \limsup_{\varepsilon \rightarrow 0^{+}} \limsup_{n \rightarrow \infty}   \sup_{s \in [0,T]} \Big| \sum_{z=\varepsilon n}^{ 2 b_G n} \sum_{x=z-\varepsilon n +1}^{z + \varepsilon n -1}  n[G_{+}(s, \tfrac{x}{n}) - G_{+}(s, \tfrac{z}{n}) ]   p(x-z) 
  - \frac{1}{n} \sum_{z=0}^{2 b_G n}  \Delta G_{+} (s, \tfrac{z}{n})   \sum_{r=- \varepsilon n +1}^{\varepsilon n -1} \frac{r^2 p(r)}{2} \Big|=0. \label{limprincneu3} 
\end{align}
Above we used that $ \Delta G_{+}$ is uniformly continuous.
Since the limit in \eqref{princneupos} can be bounded from above by the sum of the limits of \eqref{limprincneu1}, \eqref{limprincneu15}, \eqref{limprincneu2} and \eqref{limprincneu3},  the proof ends.
\end{proof}
The next result is a immediate consequence of the last one.
\begin{cor} \label{princneum}
Let $t \in [0,T]$, $G \in \mcb S_{\textrm{Rob}}$. Then, 
\begin{equation*} 
\begin{split}
&\limsup_{\varepsilon \rightarrow 0^{+}} \limsup_{n \rightarrow \infty} \mathbb{E}_{\mu_n} \Big[ \Big| \int_{0}^{t} \Big\{ \sum_{z=\varepsilon n}^{\infty} \sum_{x=0}^{\infty}  n[G(s, \tfrac{x}{n}) - G(s, \tfrac{z}{n}) ]   p(x-z)  \eta_s^n(z) - \frac{\sigma^2}{2n}  \sum_{z=0}^{\infty} \Delta  G (s, \tfrac{z}{n})  \eta_s^n(z)  \Big\} ds \Big| \Big] =0,
\\
&\limsup_{\varepsilon \rightarrow 0^{+}} \limsup_{n \rightarrow \infty} \mathbb{E}_{\mu_n} \Big[ \Big| \int_{0}^{t} \Big\{ \sum_{z=- \infty }^{- \varepsilon n} \sum_{x=-\infty}^{-1}  n[G(s, \tfrac{x}{n}) - G(s, \tfrac{z}{n}) ]   p(x-z)  \eta_s^n(z) - \frac{\sigma^2}{2n}  \sum_{z=-\infty}^{-1} \Delta G (s, \tfrac{z}{n})  \eta_s^n(z)  \Big\} ds \Big| \Big] =0.\end{split}
\end{equation*}
\end{cor}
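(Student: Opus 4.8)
The plan is to obtain both identities directly from the previous proposition (the one establishing \eqref{princneupos} and \eqref{princneuneg}), exploiting only the pointwise bound $0 \le \eta_s^n(z) \le 1$. I will prove the first identity, the second being analogous under the reflection $z \mapsto -z$.

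First I would move the expectation and the absolute value inside the time integral: writing $\Psi_s^{n,\varepsilon}$ for the bracketed integrand, the triangle inequality together with Tonelli's theorem gives
\begin{align*}
\mathbb{E}_{\mu_n} \Big[ \Big| \int_0^t \Psi_s^{n,\varepsilon}\, ds \Big| \Big] \le \int_0^t \mathbb{E}_{\mu_n}\big[ |\Psi_s^{n,\varepsilon}| \big]\, ds \le T \sup_{s \in [0,T]} \mathbb{E}_{\mu_n}\big[ |\Psi_s^{n,\varepsilon}| \big].
\end{align*}
Thus it suffices to produce, for each $s$, a deterministic bound on $|\Psi_s^{n,\varepsilon}|$ that is uniform in the configuration and vanishes after $\limsup_{n}$ and then $\limsup_{\varepsilon}$.

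Collecting the two sums in $\Psi_s^{n,\varepsilon}$ by the common index $z$, I can write $\Psi_s^{n,\varepsilon} = \sum_{z=0}^{\infty} c_n^{\varepsilon}(s,z)\,\eta_s^n(z)$, where $c_n^{\varepsilon}(s,z) = \sum_{x=0}^{\infty} n[G(s,\tfrac{x}{n}) - G(s,\tfrac{z}{n})] p(x-z) - \tfrac{\sigma^2}{2n}\Delta G(s,\tfrac{z}{n})$ when $z \ge \varepsilon n$, and $c_n^{\varepsilon}(s,z) = -\tfrac{\sigma^2}{2n}\Delta G(s,\tfrac{z}{n})$ when $0 \le z < \varepsilon n$. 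Since $0 \le \eta_s^n(z) \le 1$, this yields the deterministic bound $|\Psi_s^{n,\varepsilon}| \le \sum_{z=0}^{\infty} |c_n^{\varepsilon}(s,z)|$, so the corollary reduces to showing
\begin{align*}
\limsup_{\varepsilon \to 0^+} \limsup_{n \to \infty} \sup_{s \in [0,T]} \sum_{z=0}^{\infty} |c_n^{\varepsilon}(s,z)| = 0.
\end{align*}

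The main point — and the only place care is needed — is that this sum of absolute values is exactly what the proof of \eqref{princneupos} already controls. Indeed, that proof never cancels contributions coming from different values of $z$: the only cancellation used is the symmetric identity $\sum_r r\,p(r) = 0$ inside a fixed $z$ (killing the linear Taylor term), so each of its four pieces is estimated $z$ by $z$ and then summed. Concretely, the range $0 \le z < \varepsilon n$ contributes at most $\tfrac{\sigma^2}{2n}\sum_{z=0}^{\varepsilon n} |\Delta G(s,\tfrac{z}{n})| \lesssim \varepsilon \|\Delta G\|_{\infty}$; in the bulk $\varepsilon n \le z \le 2 b_G n$ the Taylor remainder is bounded for each $z$ by $\tfrac{\sigma^2}{2n}$ times the modulus of continuity of $\Delta G$ at scale $\varepsilon$, as in \eqref{limprincneu3}, whose $z$-sum is $\lesssim \omega_{\Delta G}(\varepsilon)$; the replacement of $\sum_{|r|<\varepsilon n} \tfrac{r^2 p(r)}{2}$ by $\tfrac{\sigma^2}{2}$ costs the tail $\sum_{|r|\ge\varepsilon n} r^2 p(r)$, as in \eqref{limprincneu2}; and the far-range terms, as in \eqref{limprincneu1} and \eqref{limprincneu15}, are handled by the finite-variance (long-range) decay estimate already used for \eqref{lemconvrob4}. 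Each of these bounds is the absolute value of a summand, hence insensitive to the insertion of the weights $\eta_s^n(z) \in [0,1]$. Therefore $\sup_s \sum_z |c_n^{\varepsilon}(s,z)|$ is dominated by a deterministic quantity tending to $0$, and combined with the first display the claim follows. The subtlety worth flagging is that \eqref{princneupos} is stated for $|\sum_z c_n^{\varepsilon}|$ rather than $\sum_z |c_n^{\varepsilon}|$; it is the \emph{proof}, not the statement, that delivers the stronger term-by-term control, and this is precisely what legitimizes introducing the configuration-dependent weights.
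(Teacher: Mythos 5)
Your proposal is correct and follows essentially the same route as the paper: the paper proves the preceding proposition by term-by-term (in $z$) estimates and then declares Corollary \ref{princneum} an immediate consequence via the bound $0 \leq \eta_s^n(z) \leq 1$, exactly as you do. Your explicit observation that one needs $\sum_z |c_n^{\varepsilon}(s,z)| \to 0$ rather than the stated $\big|\sum_z c_n^{\varepsilon}(s,z)\big| \to 0$ --- and that it is the \emph{proof}, not the statement, of \eqref{princneupos} which supplies this term-by-term control (the only cancellation being within a fixed $z$, hence compatible with weights depending only on $z$) --- is precisely the justification the paper leaves implicit.
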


Finally, we present a result that was useful in Section \ref{sectight}.
\begin{prop} \label{tight2condaux}
For $G \in \mcb S_{ Dif}$,  it holds
\begin{align*}
\sum_{w,z }     \big[G\left(s,\tfrac{z}{n}\right)-G\left(s,\tfrac{w}{n}\right)\big ]^2 p(z-w)  \lesssim n^{-1} , \forall s \in [0,T].
\end{align*}
\end{prop}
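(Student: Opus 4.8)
The plan is to collapse the double sum to a single sum localized on the support of $G$, and then combine a first-order (mean-value) bound with the finite-variance hypothesis $\sum_z z^2 p(z) = \sigma^2 < \infty$. First I would fix $s \in [0,T]$ and write $a_{w,z} := [G(s,\tfrac{z}{n}) - G(s,\tfrac{w}{n})]^2 p(z-w)$, noting three features: $a_{w,z} \ge 0$; it is symmetric, $a_{w,z} = a_{z,w}$, because $p$ is even; and it vanishes unless at least one of $\tfrac{w}{n}, \tfrac{z}{n}$ lies in the support $[-b_G,b_G]$ of $G(s,\cdot)$, i.e. unless $|w| \le b_G n$ or $|z| \le b_G n$, with $b_G$ as in \eqref{eq:B_g}.

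Using $\mathbbm{1}[|w| \le b_G n \text{ or } |z| \le b_G n] \le \mathbbm{1}[|w| \le b_G n] + \mathbbm{1}[|z| \le b_G n]$ and the symmetry of $a_{w,z}$ to relabel indices, I would bound
\begin{align*}
\sum_{w,z} a_{w,z} \le 2 \sum_{|w| \le b_G n} \sum_{z} a_{w,z} = 2 \sum_{|w| \le b_G n} \sum_{r} \Big[G\big(s,\tfrac{w+r}{n}\big) - G\big(s,\tfrac{w}{n}\big)\Big]^2 p(r),
\end{align*}
after the change of variables $r = z - w$. Since $G \in \mcb S_{\textrm{Dif}} = P\big([0,T], C_c^{\infty}(\mathbb{R})\big)$, the spatial derivative is uniformly bounded on $[0,T] \times \mathbb{R}$, so with $\|\partial_u G\|_{\infty} := \sup_{(s,u)} |\partial_u G(s,u)| < \infty$ the mean-value theorem gives $|G(s,\tfrac{w+r}{n}) - G(s,\tfrac{w}{n})| \le \|\partial_u G\|_{\infty} \tfrac{|r|}{n}$.

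Plugging this in and using $\sum_r r^2 p(r) = \sigma^2 < \infty$, the estimate becomes
\begin{align*}
2 \|\partial_u G\|_{\infty}^2 \, n^{-2} \sum_{|w| \le b_G n} \sum_{r} r^2 p(r) = 2 \|\partial_u G\|_{\infty}^2 \, \sigma^2 \, n^{-2} \big(2 \lfloor b_G n\rfloor + 1\big) \lesssim n^{-1},
\end{align*}
since the number of indices $w$ with $|w| \le b_G n$ is of order $n$, which yields the claim for every $s \in [0,T]$. The only place where the hypotheses genuinely matter, and the main point to watch, is the interplay between the infinite range of the jumps and the scaling: the compact support of $G$ supplies the factor $O(n)$ (the count of relevant $w$) while the squared increment supplies $n^{-2}$, and convergence of the residual $r$-series is exactly the finite-variance assumption, so no truncation of large jumps is needed. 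The symmetrization step is the key device, as it localizes one of the two summation indices to the support of $G$ and thereby replaces an a priori divergent count of lattice points by the $O(n)$ factor.
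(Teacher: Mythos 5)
Your proof is correct and follows essentially the same route as the paper: restrict one summation index to the support of $G$ (the paper's factor $2\sum_{|x|\le b_G n}$, which you justify explicitly via symmetry of $p$ and a union bound on indicators), apply a first-order Taylor/mean-value bound $|G(s,\tfrac{w+r}{n})-G(s,\tfrac{w}{n})|\le \|\partial_u G\|_{\infty}\tfrac{|r|}{n}$, and sum using $\sum_r r^2 p(r)=\sigma^2<\infty$ to get the bound $\lesssim n^{-2}\cdot n = n^{-1}$. The only difference is that you spell out the localization and symmetrization steps that the paper leaves implicit.
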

\begin{proof}
From a Taylor expansion of first order, the leftmost term in last display can be estimated by 
\begin{align*}
 \frac{\| \partial_u G \|_{\infty}^2}{n}  \frac{2}{n}  \sum_{|x| \leq b_G n}  \sum_{r  } r^2 p(r)  = \frac{\| \partial_u G \|_{\infty}^2}{n}  \frac{2 \sigma^2 (2b_Gn +1)}{n} \leq \frac{6b_G \sigma^2 \|\partial_u G\|_{\infty}^2}{n},
\end{align*}
which gives the desired result.
\end{proof}

\section{Analysis tools} \label{secuniq}

In this section we will prove the uniqueness of the weak solutions of \eqref{eqhyddifrob}. Since we did not find in the literature the proof of uniqueness of our weak solutions we decided to prove it here. 

\subsection{Sobolev space results}

We rewrite Theorem 8.2 of \cite{brezis2010functional} for our convenience.
\begin{prop} \label{repcont}
\textcolor{black}{Let $f \in \mcb{H}^1(I)$, where $I$ is a one-dimensional interval.} If $\bar{I}$ denotes the closure of $I$, there exists one function $\tilde{f} \in C^0(\bar{I}) $ such that $f = \tilde{f}$ almost everywhere on $I$ and
\begin{align*}
\tilde{f}(y) - \tilde{f}(x) = \int_{x}^y \frac{df}{du}(u) du, \forall x,y \in \bar{I}. 
\end{align*}
\end{prop}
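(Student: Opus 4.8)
The statement is the classical one-dimensional Sobolev embedding packaged with the fundamental theorem of calculus, so the plan is to construct the continuous representative explicitly and then reconcile it with $f$. First I would fix a base point $x_0 \in I$ and, writing $g := \tfrac{df}{du} \in L^2(I)$ for the weak derivative of $f$, define
\[
\tilde{f}(x) := \int_{x_0}^x g(u)\,du, \qquad x \in \bar{I}.
\]
Since $g \in L^2(I) \subset L^1_{\mathrm{loc}}(I)$, this integral is well defined, and $\tilde{f}$ is locally absolutely continuous, hence $\tilde{f} \in C^0(\bar{I})$. The desired identity $\tilde{f}(y) - \tilde{f}(x) = \int_x^y g(u)\,du$ for all $x,y \in \bar{I}$ is then immediate from the additivity of the integral, so the only real content is to show $f = \tilde{f}$ almost everywhere (up to a harmless additive constant).

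Next I would verify that $\tilde{f}$ itself has weak derivative $g$. For any $\phi \in C_c^{\infty}(I)$, applying Fubini's theorem to exchange the order of integration in the region where $u$ lies between $x_0$ and $x$, followed by an integration by parts that uses the compact support of $\phi$, yields
\[
\int_I \tilde{f}(x)\,\phi'(x)\,dx = -\int_I g(u)\,\phi(u)\,du.
\]
Comparing this with the defining property of the weak derivative of $f$, the function $h := f - \tilde{f} \in L^2(I)$ satisfies $\int_I h(u)\,\phi'(u)\,du = 0$ for every $\phi \in C_c^{\infty}(I)$.

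The key step, and the only nontrivial one, is the fundamental lemma that such an $h$ must be almost everywhere constant; I expect this to be the main obstacle, since it requires producing compactly supported primitives. I would argue directly: fix $\psi \in C_c^{\infty}(I)$ with $\int_I \psi\,du = 1$, and for an arbitrary $w \in C_c^{\infty}(I)$ set $\lambda := \int_I w\,du$. The function $w - \lambda\psi$ has vanishing integral, so its primitive $\phi(x) := \int_{x_0}^x [w(u) - \lambda\psi(u)]\,du$ again belongs to $C_c^{\infty}(I)$, the compactness of its support being exactly what the vanishing integral guarantees. Since $\phi' = w - \lambda\psi$, the relation above gives $\int_I h\,w\,du = \lambda \int_I h\,\psi\,du$; writing $c := \int_I h\,\psi\,du$, this reads $\int_I (h - c)\,w\,du = 0$ for every $w \in C_c^{\infty}(I)$, whence $h = c$ almost everywhere. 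Replacing $\tilde{f}$ by $\tilde{f} + c$ then absorbs this constant, so that $f = \tilde{f}$ almost everywhere on $I$ while preserving both continuity on $\bar{I}$ and the fundamental-theorem identity, completing the argument.
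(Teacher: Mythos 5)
Your proof is correct and follows essentially the same route as the paper's source: the paper gives no proof of Proposition \ref{repcont}, citing it as Theorem 8.2 of \cite{brezis2010functional}, and the proof there is precisely your argument --- define the primitive $\tilde f$ of the weak derivative $g$, verify via Fubini that $\tilde f$ has weak derivative $g$, and apply the ``zero weak derivative implies almost everywhere constant'' lemma (Lemma 8.1 in that reference), which you reprove with the standard compactly supported primitive construction. The only step to tighten is the claim that $\tilde f \in C^0(\bar I)$: continuity up to a finite endpoint of $\bar I$ not belonging to $I$ requires $g \in L^1$ near that endpoint, which follows from $g \in L^2(I)$ by Cauchy--Schwarz on a bounded neighbourhood of the endpoint, not from $g \in L^1_{\mathrm{loc}}(I)$ alone.
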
 
This means that every function $f \in \mcb{H}^1(I)$ admits one \textit{continuous representative}  on $\bar{I}$, which will be denoted by $\tilde{f}$. Another very useful result is the integration by parts formula for $\mcb{H}^1(I)$, which comes from Corollary 8.10 in \cite{brezis2010functional}.
\begin{prop}
Let $f,g \in \mcb{H}^1(I)$. Then $f g \in \mcb{H}^1(I)$ and $\frac{d}{du} (f g) =\frac{df}{du}  g + f \frac{dg}{du}$. Furthermore, the formula for integration by parts holds
\begin{align*}
\int_{x}^{y}  \frac{df}{d u}(u) g(u) du = \tilde{f}(y) \tilde{g}(y) - \tilde{f}(x) \tilde{g}(x) - \int_{x}^{y} f(u) \frac{dg}{d u}(u)  du, \forall x,y \in \bar{I}.
\end{align*}
Above, $\tilde{f}$ and $\tilde{g}$ are the continuous representatives of $f$ and $g$, respectively.
\end{prop}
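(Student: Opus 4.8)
The plan is to prove the definite integration-by-parts identity first, reducing everything to the fundamental-theorem-of-calculus representation of Proposition \ref{repcont} and a single application of Fubini's theorem, and then to read off both the product rule and the membership $fg \in \mcb{H}^1(I)$ as consequences. Throughout I write $f' := \frac{df}{du}$, $g' := \frac{dg}{du}$ for the weak derivatives and $\tilde f, \tilde g$ for the continuous representatives supplied by Proposition \ref{repcont}.

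First I would fix $x,y \in \bar I$ with $x \le y$; these are finite, since the endpoints of $\bar I$ sitting at $\pm\infty$ do not belong to $\bar I$, so $[x,y]$ is a bounded subinterval on which $f', g' \in L^2([x,y]) \subset L^1([x,y])$. Using Proposition \ref{repcont} to write $\tilde g(u) = \tilde g(x) + \int_x^u g'(t)\,dt$ and $\tilde f(u) = \tilde f(x) + \int_x^u f'(t)\,dt$, and recalling $f = \tilde f$, $g = \tilde g$ almost everywhere, I substitute these into $\int_x^y f'(u)g(u)\,du$ and $\int_x^y f(u)g'(u)\,du$ respectively. This produces the boundary terms $\tilde g(x)[\tilde f(y) - \tilde f(x)]$ and $\tilde f(x)[\tilde g(y) - \tilde g(x)]$, together with the two iterated integrals $\iint_{x \le t \le u \le y} f'(u)g'(t)\,dt\,du$ and $\iint_{x \le t \le u \le y} g'(u)f'(t)\,dt\,du$. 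Since $\iint_{[x,y]^2} |f'(u)|\,|g'(t)|\,du\,dt = \big(\int_x^y |f'|\big)\big(\int_x^y |g'|\big) < \infty$, Fubini applies; relabelling $t \leftrightarrow u$ in the second iterated integral shows that the two together integrate $f'(u)g'(t)$ over the full square $[x,y]^2$, hence sum to $\big(\int_x^y f'\big)\big(\int_x^y g'\big) = [\tilde f(y) - \tilde f(x)][\tilde g(y) - \tilde g(x)]$. A short cancellation of cross terms then yields $\int_x^y f'g + \int_x^y fg' = \tilde f(y)\tilde g(y) - \tilde f(x)\tilde g(x)$, which is exactly the asserted formula after transposing $\int_x^y fg'$.

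Next I would upgrade this definite identity to the global statement. It says that the continuous function $H := \tilde f \tilde g$ satisfies $H(y) - H(x) = \int_x^y (f'g + fg')$ for all $x,y \in \bar I$; in particular $H$ is locally absolutely continuous with a.e. derivative $f'g + fg'$. To conclude $fg \in \mcb{H}^1(I)$ I must verify that both $fg$ and $f'g + fg'$ lie in $L^2(I)$. Here I invoke the one-dimensional Sobolev embedding $\mcb{H}^1(I) \subset L^\infty(I)$ (Theorem 8.8 of \cite{brezis2010functional}), giving $\|f\|_\infty, \|g\|_\infty < \infty$; then $|fg| \le \|f\|_\infty |g|$ yields $fg \in L^2(I)$, while $\|f'g\|_{2,I} \le \|g\|_\infty \|f'\|_{2,I}$ and $\|fg'\|_{2,I} \le \|f\|_\infty \|g'\|_{2,I}$ give $f'g + fg' \in L^2(I)$. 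Finally, to identify $f'g + fg'$ as the weak derivative, I take $\phi \in C_c^\infty(I)$ with support in some $[x,y] \subset I$: since $H\phi$ is absolutely continuous with compact support and $(H\phi)' = (f'g + fg')\phi + H\phi'$ a.e., integrating gives $\int_I fg\,\phi' = -\int_I (f'g + fg')\phi$, which is precisely the defining relation for the weak derivative. This establishes $fg \in \mcb{H}^1(I)$ with $\frac{d}{du}(fg) = f'g + fg'$.

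The computation is elementary once the pieces are assembled; the only points demanding care — and hence the main obstacle — are the global $L^2$ integrability of $fg$ and $f'g + fg'$ on \emph{unbounded} intervals, which genuinely needs the embedding $\mcb{H}^1(I) \subset L^\infty(I)$ rather than merely the pointwise representation, and the justification of Fubini together with the relabelling that fuses the two triangular iterated integrals into the full square. Both are routine but should be spelled out to keep the argument self-contained.
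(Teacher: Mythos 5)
Your proof is correct, and it takes a genuinely different route from the paper's, which in fact offers no argument at all for this proposition: it is stated as an import of Corollary 8.10 of \cite{brezis2010functional}. The textbook proof there runs in the opposite order to yours: Brezis first proves the product rule $(fg)'=f'g+fg'$ by smooth approximation --- using the density of restrictions to $I$ of functions in $C_c^{\infty}(\mathbb{R})$ (Theorem 8.7 there) together with the embedding $\mcb{H}^1(I)\subset L^{\infty}(I)$ to pass the classical Leibniz rule to the limit --- and only then deduces integration by parts by applying the fundamental-theorem representation (the paper's Proposition \ref{repcont}) to the product $fg$, which by that point is known to lie in $\mcb{H}^1(I)$. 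You reverse this: you establish the definite integration-by-parts identity first, by a direct Fubini computation that fuses the two triangular iterated integrals into the square $[x,y]^2$, and then read off the product rule, the weak-derivative identity, and the membership $fg\in\mcb{H}^1(I)$. Your route buys self-containedness: it needs only Proposition \ref{repcont}, Fubini, and the $L^{\infty}$ embedding, and avoids the density theorem entirely, which is itself a nontrivial extension-plus-mollification result; the textbook route buys generality and robustness, since the approximation argument works verbatim in $W^{1,p}(I)$ for every $p$ and is the standard template for product and chain rules. You also correctly isolate the one place where the embedding is indispensable, namely the global $L^2$ integrability of $fg$ and $f'g+fg'$ on a possibly unbounded $I$. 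The only cosmetic omission is the case $x>y$ of the formula, which is immediate since both sides are antisymmetric under exchanging $x$ and $y$.
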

The next result is useful to prove that there is no macroscopic blockage of mass between $\mathbb{R}_{-}^{*}$ and $\mathbb{R}_{+}$ for $\beta \in [0,1)$, even when $\sigma_{\mcb S}^2 = \sigma ^2$.
\begin{prop} \label{H1reta}
Assume $f \in \mcb{H}^1(\mathbb{R} ^{*})$ and $f(0^{-})=f(0^{+})$. Then $f \in \mcb{H}^1(\mathbb{R})$.
\end{prop}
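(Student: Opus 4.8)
The plan is to exhibit an explicit candidate for the weak derivative of $f$ on the whole line and then to verify the defining integral identity of $\mcb{H}^1(\mathbb{R})$, with the continuity hypothesis at the origin being precisely what makes the boundary contributions cancel. First I would set $f_{\pm}:=f|_{\mathbb{R}_{\pm}^{*}}$ and define $g\in L^2(\mathbb{R})$ by $g:=\frac{df_{-}}{du}$ on $\mathbb{R}_{-}^{*}$ and $g:=\frac{df_{+}}{du}$ on $\mathbb{R}_{+}^{*}$. Since $f\in\mcb{H}^1(\mathbb{R}^{*})$ means $f_{-}\in\mcb{H}^1(\mathbb{R}_{-}^{*})$ and $f_{+}\in\mcb{H}^1(\mathbb{R}_{+}^{*})$, both weak derivatives lie in $L^2$ of the corresponding half-line, so $g\in L^2(\mathbb{R})$; likewise $f\in L^2(\mathbb{R})$ because $\|f\|_{2,\mathbb{R}}^2=\|f_{-}\|_{2,\mathbb{R}_{-}^{*}}^2+\|f_{+}\|_{2,\mathbb{R}_{+}^{*}}^2<\infty$. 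It then suffices to show that $\int_{\mathbb{R}}f\,\phi'\,du=-\int_{\mathbb{R}}g\,\phi\,du$ for every $\phi\in C_c^{\infty}(\mathbb{R})$.

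The key step is to split this integral over $\mathbb{R}_{-}^{*}$ and $\mathbb{R}_{+}^{*}$ and to apply the integration-by-parts formula stated above on each half-line. Although $\phi|_{\mathbb{R}_{\pm}^{*}}$ is not a legitimate test function for the half-line (it need not vanish near $0$), it does belong to $\mcb{H}^1(\mathbb{R}_{\pm}^{*})$, so the integration-by-parts proposition applies to the pair $(f_{\pm},\phi)$. Integrating on $(0,R)$ and on $(-R,0)$ with $R$ beyond the support of $\phi$, the contributions at $\pm R$ vanish because $\phi$ has compact support, and letting $R\to\infty$ yields
\begin{align*}
\int_0^{\infty} f_{+}\,\phi'\,du &= -\tilde{f}_{+}(0)\phi(0) - \int_0^{\infty} \tfrac{df_{+}}{du}\,\phi\,du, \\
\int_{-\infty}^{0} f_{-}\,\phi'\,du &= \tilde{f}_{-}(0)\phi(0) - \int_{-\infty}^{0} \tfrac{df_{-}}{du}\,\phi\,du,
\end{align*}
where $\tilde{f}_{\pm}$ are the continuous representatives provided by Proposition \ref{repcont}. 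Adding these two identities, the boundary terms combine into $[\tilde{f}_{-}(0)-\tilde{f}_{+}(0)]\phi(0)$, while the interior terms reconstitute $-\int_{\mathbb{R}}g\,\phi\,du$.

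Finally I would invoke the hypothesis $f(0^{-})=f(0^{+})$, which by the very definition of these one-sided limits means $\tilde{f}_{-}(0)=\tilde{f}_{+}(0)$; hence the boundary term is zero and $\int_{\mathbb{R}}f\,\phi'\,du=-\int_{\mathbb{R}}g\,\phi\,du$ for all $\phi\in C_c^{\infty}(\mathbb{R})$. This is exactly the assertion that $f\in\mcb{H}^1(\mathbb{R})$ with weak derivative $g$. The only point requiring care is the justification of the half-line integration by parts, namely checking that $\phi|_{\mathbb{R}_{\pm}^{*}}\in\mcb{H}^1(\mathbb{R}_{\pm}^{*})$ and that the boundary terms at infinity drop; both are immediate from the compact support and smoothness of $\phi$, so no genuine obstacle arises. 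The conceptual heart of the statement is simply that a function which is $\mcb{H}^1$ on each side and whose two continuous representatives agree at the junction point has no distributional mass (no Dirac delta) at the origin in its derivative.
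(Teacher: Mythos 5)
Your proposal is correct and follows essentially the same route as the paper's proof: restrict $f$ to the two half-lines, apply the half-line integration-by-parts formula against a test function $\phi \in C_c^{\infty}(\mathbb{R})$, and use the hypothesis $f(0^{-})=f(0^{+})$ to cancel the two boundary terms at the origin, identifying the weak derivative as the glued function $g$. The extra justifications you flag (that $\phi|_{\mathbb{R}_{\pm}^{*}} \in \mcb{H}^1(\mathbb{R}_{\pm}^{*})$ and that the terms at infinity vanish by compact support) are sound and only make explicit what the paper leaves implicit.
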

\begin{proof}
Let $\phi \in C_c^{\infty}(\mathbb{R})$. Denote $f_{-}: = f|_{\mathbb{R}_{-}^{*}}$, $f_{+}: = f|_{\mathbb{R}_{+}^{*}}$. Denote $\bar{f}_{-}$ and $\bar{f}_{+}$ the continuous representatives of $f_{-}$ and $f_{+}$ in $( -\infty, 0]$ and $[0, \infty)$, respectively. Then  $ \bar{f}_{-}(0) = f(0^{-})= f(0^{+})  = \bar{f}_{+}(0)$. The integration by parts formula in $\mcb{H}^1(\mathbb{R}_{-}^{*})$ leads to
\begin{equation} \label{HRneg}
\int_{\mathbb{R}_{-}} f(u) \phi'(u) du = \int_{\mathbb{R}_{-}} \bar{f}_{-}(u) \phi'(u) du = \bar{f}_{-}(0) \phi(0) - \int_{\mathbb{R}_{-}} \partial_u \bar{f}_{-}(u) \phi(u) du.
\end{equation}
In the same way, the integration by parts formula in $\mcb{H}^1(\mathbb{R}_{+}^{*})$ leads to
\begin{equation} \label{HRpos}
\int_{\mathbb{R}_{+}} f(u) \phi'(u) du = \int_{\mathbb{R}_{+}} \bar{f}_{+}(u) \phi'(u) du =- \bar{f}_{+}(0) \phi(0) - \int_{\mathbb{R}_{+}} \partial_u \bar{f}_{+}(u) \phi(u) du.
\end{equation}
Summing \eqref{HRneg} and \eqref{HRpos}, we get
\begin{align*}
 \int_{\mathbb{R}} f(u) \phi'(u) du  =& -\int_{\mathbb{R}_{-}} \partial_u \bar{f}_{-}(u) \phi(u) du -  \int_{\mathbb{R}_{+}} \partial_u \bar{f}_{+}(u) \phi(u) du = - \int_{\mathbb{R}} g(u) \phi(u) du,
\end{align*}
where for $u\in\bb R$, $g(u) = \partial_u \bar{f}_{-}(u)\mathbbm{1}_{u<0}+ \partial_u \bar{f}_{+}(u)\mathbbm{1}_{u >0}$. Since $f, g \in L^2(\mathbb{R})$, then $f \in \mcb{H}^1(\mathbb{R})$. 
\end{proof}
The following result can be proved using the same ideas as in Lemma 8.2 of \cite{brezis2010functional}.
\begin{prop} \label{rhoposH1}
Let $f \in \mcb {H}^1(\mathbb{R}_{-}^{*}), g \in \mcb{H}^1(\mathbb{R}_{+}^{*})$. Let $\tilde{f}_{-}$ and $\tilde{g}_{+}$ be the even extensions of the continuous representatives $\tilde{f}$ and $\tilde{g}$, respectively. Then $\tilde{f}_{-}, \tilde{g}_{+} \in  \mcb{H}^1(\mathbb{R})$.
\end{prop}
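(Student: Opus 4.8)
The plan is to prove the statement for $\tilde{g}_{+}$, the even extension of $g \in \mcb{H}^1(\mathbb{R}_{+}^{*})$; the argument for $\tilde{f}_{-}$ is entirely symmetric, obtained by reflecting the roles of the two half-lines. Write $\tilde{g}$ for the continuous representative of $g$ on $[0,\infty)$ provided by Proposition \ref{repcont}, and define the even extension by $\tilde{g}_{+}(u) := \tilde{g}(|u|)$, so that $\tilde{g}_{+}(u) = \tilde{g}(u)$ on $[0,\infty)$ and $\tilde{g}_{+}(u) = \tilde{g}(-u)$ on $(-\infty, 0)$. Since $\int_{\mathbb{R}} |\tilde{g}_{+}(u)|^2 du = 2 \int_0^{\infty} |\tilde{g}(u)|^2 du = 2\|g\|_{2,\mathbb{R}_{+}^{*}}^2 < \infty$, we immediately have $\tilde{g}_{+} \in L^2(\mathbb{R})$, and it only remains to exhibit its weak derivative in $L^2(\mathbb{R})$.

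The natural candidate is the \emph{odd} extension of $\tfrac{dg}{du}$: I would set $h(u) := \tfrac{dg}{du}(u)$ for $u > 0$ and $h(u) := -\tfrac{dg}{du}(-u)$ for $u < 0$, which lies in $L^2(\mathbb{R})$ with $\|h\|_{2,\mathbb{R}}^2 = 2\big\|\tfrac{dg}{du}\big\|_{2,\mathbb{R}_{+}^{*}}^2$. To check that $h = \tfrac{d \tilde{g}_{+}}{du}$ weakly, fix $\phi \in C_c^{\infty}(\mathbb{R})$ and split $\int_{\mathbb{R}} \tilde{g}_{+}(u)\phi'(u)\, du$ into its contributions over $(0,\infty)$ and $(-\infty,0)$. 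On the positive half-line, applying the integration by parts formula for $\mcb{H}^1$ (the proposition stated earlier in this section) on a large interval $[0,R]$ containing the support of $\phi$ yields $\int_0^{\infty} \tilde{g}\,\phi'\, du = -\tilde{g}(0)\phi(0) - \int_0^{\infty} \tfrac{dg}{du}\,\phi\, du$, the boundary term at $R$ vanishing because $\phi$ is compactly supported. On the negative half-line, the change of variables $v = -u$ (using $\phi'(-v) = -(\phi(-\cdot))'(v)$) followed by the same integration by parts produces $\int_{-\infty}^{0} \tilde{g}_{+}\,\phi'\, du = \tilde{g}(0)\phi(0) - \int_{-\infty}^{0} h\,\phi\, du$.

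Summing the two contributions, the boundary terms $\pm\tilde{g}(0)\phi(0)$ cancel exactly, leaving $\int_{\mathbb{R}} \tilde{g}_{+}\,\phi'\,du = -\int_{\mathbb{R}} h\,\phi\, du$ for every $\phi \in C_c^{\infty}(\mathbb{R})$. By the definition of the weak derivative this says precisely that $\tfrac{d\tilde{g}_{+}}{du} = h \in L^2(\mathbb{R})$, whence $\tilde{g}_{+} \in \mcb{H}^1(\mathbb{R})$, as claimed.

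I expect the only delicate point to be the bookkeeping in the reflection step: one must track the sign introduced by the substitution $v=-u$ and verify that, after integrating by parts, the boundary contribution at the origin comes out as $+\tilde{g}(0)\phi(0)$, opposite to the $-\tilde{g}(0)\phi(0)$ from the positive side. The exact cancellation of these two boundary terms is the heart of the argument, and it is precisely what the evenness of the extension guarantees: since $\tilde{g}_{+}$ has no jump at $0$, its distributional derivative carries no Dirac mass there, which is exactly why the odd extension $h$ (and not $h$ plus a boundary term) is the weak derivative.
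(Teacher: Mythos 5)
Your proof is correct, and it is more self-contained than what the paper offers: the paper does not actually write out a proof of Proposition \ref{rhoposH1}, it simply remarks that the result ``can be proved using the same ideas as in Lemma 8.2 of \cite{brezis2010functional}''. That intended route is the classical one for reflection extensions: with $h$ the odd extension of $\tfrac{dg}{du}$, one writes $\tilde{g}_{+}(x)=\tilde{g}(0)+\int_0^x h(t)\,dt$ for \emph{all} $x\in\mathbb{R}$ (the sign flip in $h$ is exactly what makes this identity survive the reflection), and then invokes the Fubini-type lemma asserting that an antiderivative of an $L^1_{loc}$ function has that function as weak derivative. You arrive at the same candidate derivative $h$ by a different mechanism: a direct verification of the weak-derivative identity, splitting $\int_{\mathbb{R}}\tilde{g}_{+}\phi'$ over the two half-lines, applying the integration-by-parts proposition the paper states just before (Corollary 8.10 of \cite{brezis2010functional}), and checking that the two boundary contributions $\mp\tilde{g}(0)\phi(0)$ at the origin cancel; your bookkeeping of the sign under $v=-u$ is right, and this cancellation is indeed the crux. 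Notably, this is precisely the technique the paper itself uses to prove Proposition \ref{H1reta}, so your argument fits naturally into the paper's toolkit; in fact, a slightly shorter packaging of your computation would be to observe that the reflection $u\mapsto\tilde{g}(-u)$ belongs to $\mcb{H}^1(\mathbb{R}_{-}^{*})$ (one change of variables in the definition of weak derivative), that the two one-sided values of $\tilde{g}_{+}$ at $0$ coincide, and then quote Proposition \ref{H1reta} directly. The cited Brezis route buys brevity once the lemma is granted; yours buys transparency about where evenness (no jump at the origin, hence no Dirac mass in the distributional derivative) is actually used.
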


 From Theorem 8.7 in \cite{brezis2010functional}, $C_c^{\infty}(\mathbb{R})$ is dense in $\mcb{H}^1(\mathbb{R})$ with the norm $|| \cdot ||_{\mcb{H}^1(\mathbb{R})}$. From Proposition 23.2 (d) in \cite{zeidler1989nonlinear}, we have that $P \big([0,T], \mcb{H}^1(\mathbb{R})\big)$ is dense in $L^2\big(0,T; \mcb{H}^1(\mathbb{R}) \big)$ with the norm $|| \cdot ||_{L^2\big(0,T; \mcb{H}^1(\mathbb{R}) \big)} $. Combining both results, we get that 
$\mcb S_{\textrm{Dif}}$ is dense in $L^2\big(0,T; \mcb{H}^1(\mathbb{R}) \big)$. This allows  enunciating the following lemma, whose proof can be adapted from the proof of Lemma A.1 in \cite{bernardin2017slow}. 

\begin{lem} \label{lemuniqdif}
Let $\varrho \in L^2 \left( 0,T; \mcb{H}^1(\mathbb{R}) \right)$ and $(H_k)_{k \geq 1}$ be a sequence of functions in $\mcb S_{\textrm{Dif}}$ converging to $\varrho$ with respect to the norm of $L^2 \left( 0,T; \mcb{H}^1(\mathbb{R}) \right)$. We define $G_k \in \mcb S_{\textrm{Dif}}$ by 
\begin{align*}
G_k (t,u) = \int_t^T H_k (s,u) ds, \forall t \in [0,T], \forall u \in \mathbb{R}, \forall k \geq 1.
\end{align*}
Then for every $I \subset \mathbb{R}$, it holds
\begin{equation} \label{lemuniq1}
\lim_{k \rightarrow \infty} \int_0^T \int_{I} \varrho(s,u) \partial_s G_k (s,u) du ds = - \int_0^T \int_{I} [ \varrho(s,u) ]^2 duds,
\end{equation}
and
\begin{equation} \label{lemuniq2}
\lim_{k \rightarrow \infty}  \int_0^T \int_{I} \partial_u \varrho(s,u) \partial_u G_k(s,u) du ds  = \frac{1}{2} \int_{I} \Big[ \frac{\partial}{\partial u} \Big(   \int_0^T  \varrho(s,u) ds \Big) \Big]^2 du.
\end{equation}
\end{lem}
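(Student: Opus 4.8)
The plan is to use the fundamental relation $\partial_t G_k(t,u) = -H_k(t,u)$, which follows immediately from the definition $G_k(t,u) = \int_t^T H_k(s,u)\,ds$, and then to transport the convergence $H_k \to \varrho$ through the two identities separately. First I would record that, since $H_k \in \mcb S_{\textrm{Dif}}$ is a polynomial in $t$ with coefficients in $C_c^\infty(\mathbb{R})$, the same is true of $G_k$, so that $G_k \in \mcb S_{\textrm{Dif}}$ and $\partial_u G_k(t,u) = \int_t^T \partial_u H_k(s,u)\,ds$ holds in the classical sense. I would also note that convergence in $L^2\big(0,T;\mcb{H}^1(\mathbb{R})\big)$ entails both $H_k \to \varrho$ and $\partial_u H_k \to \partial_u \varrho$ in $L^2([0,T]\times\mathbb{R})$.

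For \eqref{lemuniq1}, substituting $\partial_s G_k = -H_k$ rewrites the left-hand side as $-\int_0^T\int_I \varrho H_k\,du\,ds$. Since $\int_0^T\int_I \varrho(H_k-\varrho)\,du\,ds$ is bounded in absolute value by $\|\varrho\|_{2,I\times[0,T]}\,\|H_k-\varrho\|_{2,I\times[0,T]}$ by Cauchy--Schwarz, and the second factor vanishes as $k\to\infty$, the limit equals $-\int_0^T\int_I[\varrho]^2\,du\,ds$, as claimed.

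For \eqref{lemuniq2}, the key step is to prove that $\partial_u G_k$ converges in $L^2([0,T]\times I)$ to $\partial_u\Phi$, where $\Phi(t,u):=\int_t^T\varrho(s,u)\,ds$ and $\partial_u\Phi(t,u)=\int_t^T\partial_u\varrho(s,u)\,ds$ (the interchange of the weak $u$-derivative with the time integral being justified by testing against $\phi\in C_c^\infty(\mathbb{R})$ and Fubini). Writing $\partial_u G_k(t,u)-\partial_u\Phi(t,u)=\int_t^T[\partial_u H_k-\partial_u\varrho](s,u)\,ds$ and applying Cauchy--Schwarz in the $s$-variable bounds the $L^2([0,T]\times I)$-norm of this difference by $T\,\|\partial_u H_k-\partial_u\varrho\|_{2,[0,T]\times\mathbb{R}}\to 0$. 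Consequently the left-hand side of \eqref{lemuniq2} converges to $\int_I\int_0^T\partial_u\varrho(s,u)\big(\int_s^T\partial_u\varrho(r,u)\,dr\big)\,ds\,du$.

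Finally I would evaluate this limit by fixing $u$ and recognizing the inner $(s,r)$-integral as the integral of the symmetric kernel $\partial_u\varrho(s,u)\,\partial_u\varrho(r,u)$ over the triangle $\{0\le s\le r\le T\}$; by symmetry this equals one half of $\big(\int_0^T\partial_u\varrho(s,u)\,ds\big)^2=\big[\partial_u\big(\int_0^T\varrho(s,u)\,ds\big)\big]^2$, which after integrating over $I$ yields the stated right-hand side with its factor $\tfrac12$. The main obstacle is the $L^2$-convergence of the derivatives $\partial_u G_k$ together with the Fubini--symmetry manipulation in the last step; the several interchanges of integration and weak differentiation are routine but should be checked.
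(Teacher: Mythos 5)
Your proof is correct: the substitution $\partial_s G_k=-H_k$ plus Cauchy--Schwarz handles \eqref{lemuniq1}, and the $L^2([0,T]\times I)$-convergence of $\partial_u G_k$ to $\int_t^T\partial_u\varrho(r,\cdot)\,dr$ together with the symmetry of the kernel $\partial_u\varrho(s,u)\,\partial_u\varrho(r,u)$ over the triangle $\{0\le s\le r\le T\}$ yields \eqref{lemuniq2} with the factor $\tfrac12$. This is essentially the paper's own route, since the paper merely defers to Lemma A.1 of \cite{bernardin2017slow}, whose proof proceeds along exactly these lines.
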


In order to prove the uniqueness of weak solutions of \eqref{eqhyddifrob}, the following lemma will be useful.
\begin{lem} \label{lemuniqrob}
Assume $f:[0,T] \rightarrow \mathbb{R}$, $\varrho:[0,T] \times \mathbb{R}$ are bounded, $\varrho \in  L^2 \big( 0,T; \mcb{H}^1 (\mathbb{R}) \big)$ and $\varrho(s, \cdot) \in C^0(\mathbb{R})$, for almost every  $s \in [0,T]$. Let $(H_{k})_{k \geq 1}$  be a sequence in $\mcb S_{\textrm{Dif}}$  converging to $\varrho$ with respect to the norm of $L^2 \left( 0,T; \mcb{H}^1 (\mathbb{R})\right)$. Then
\begin{align*}
\lim_{k \rightarrow \infty}  \int_0^T \int_s^T \varrho(s,0) H_k(r,0) dr ds  = \frac{1}{2} \Big[ \int_0^T \varrho(s,0)  \Big]^2.
\end{align*}  
\end{lem}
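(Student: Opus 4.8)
The plan is to reduce everything to the one-dimensional trace map $f\mapsto \tilde f(0)$ on $\mcb H^1(\mathbb{R})$ and then to a pairing of an $L^2(0,T)$-convergent sequence against a \emph{fixed} $L^2(0,T)$ function. The first step I would carry out is to record that point evaluation at the origin is a bounded linear functional on $\mcb H^1(\mathbb{R})$: for $f\in \mcb H^1(\mathbb{R})$, whose continuous representative $\tilde f$ exists by Proposition \ref{repcont} and vanishes at $-\infty$ (a standard one-dimensional Sobolev fact, see \cite{brezis2010functional}), the integration by parts formula recorded after Proposition \ref{repcont} gives $\tilde f(0)^2 = 2\int_{-\infty}^0 f(u)\tfrac{df}{du}(u)\,du \le \|f\|_{\mcb H^1(\mathbb{R})}^2$. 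Applying this to $f=H_k(s,\cdot)-\varrho(s,\cdot)$, which lies in $\mcb H^1(\mathbb{R})$ for a.e.\ $s$ and whose continuous representative takes the value $H_k(s,0)-\varrho(s,0)$ at the origin (recall $H_k(s,\cdot)\in C_c^{\infty}(\mathbb{R})$ and $\varrho(s,\cdot)\in C^0(\mathbb{R})$ for a.e.\ $s$), and integrating in $s$, I obtain
\begin{align*}
\int_0^T |H_k(s,0)-\varrho(s,0)|^2\,ds \;\le\; \|H_k-\varrho\|_{L^2(0,T;\mcb H^1(\mathbb{R}))}^2 \;\longrightarrow\; 0 \quad (k\to\infty).
\end{align*}
Thus the traces $H_k(\cdot,0)$ converge to $\varrho(\cdot,0)$ in $L^2(0,T)$.

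Second, writing $\phi(s):=\varrho(s,0)$, which is bounded and hence in $L^2(0,T)$, I would apply Fubini's theorem on the simplex $\{0\le s\le r\le T\}$ to rewrite the quantity of interest as
\begin{align*}
\int_0^T \phi(s)\int_s^T H_k(r,0)\,dr\,ds \;=\; \int_0^T \Psi(r)\,H_k(r,0)\,dr, \qquad \Psi(r):=\int_0^r \phi(s)\,ds.
\end{align*}
Since $\Psi$ is continuous and bounded, it belongs to $L^2(0,T)$, so the $L^2(0,T)$-convergence from the first step lets me pass to the limit and identify the right-hand side with $\int_0^T \Psi(r)\phi(r)\,dr$.

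Finally, since $\Psi$ is absolutely continuous with $\Psi'=\phi$ almost everywhere and $\Psi(0)=0$, I would conclude
\begin{align*}
\int_0^T \Psi(r)\phi(r)\,dr \;=\; \tfrac12\int_0^T \tfrac{d}{dr}\big[\Psi(r)^2\big]\,dr \;=\; \tfrac12\,\Psi(T)^2 \;=\; \tfrac12\Big(\int_0^T \varrho(s,0)\,ds\Big)^2,
\end{align*}
which is exactly the claimed identity. The only genuinely delicate point is the first step: the boundedness of the evaluation functional on $\mcb H^1(\mathbb{R})$ and the ensuing $L^2(0,T)$ control of the traces; everything afterwards is Fubini, a pairing against the fixed $L^2$ function $\Psi$, and an elementary antiderivative computation. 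The hypothesis $\varrho(s,\cdot)\in C^0(\mathbb{R})$ for a.e.\ $s$ is what makes $\varrho(s,0)$ unambiguous, and the boundedness of $\varrho$ secures the needed integrability; the auxiliary bounded function $f$ in the statement plays no role here.
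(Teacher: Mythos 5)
Your proof is correct and follows essentially the same route as the paper's: both arguments rest on a trace estimate $|f_k(r,0)|\lesssim \|f_k(r,\cdot)\|_{\mcb H^1(\mathbb{R})}$ (yours via integration by parts on $(-\infty,0]$ plus vanishing at infinity, the paper's via Morrey's inequality and averaging over $[0,1]$) to obtain $L^2(0,T)$ convergence of the traces $H_k(\cdot,0)\to\varrho(\cdot,0)$, combined with the simplex identity $\int_0^T\int_s^T \varrho(s,0)\varrho(r,0)\,dr\,ds=\tfrac12\big(\int_0^T\varrho(s,0)\,ds\big)^2$. The only difference is organizational: the paper passes to the limit by a direct H\"older bound using $\|\varrho\|_\infty$ on the double integral, whereas you rearrange by Fubini and pair against the fixed $L^2(0,T)$ function $\Psi$.
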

\begin{proof}
This lemma is strongly inspired in Section 4.4. of \cite{bernardin2021hydrodynamic}. Holder's inequality leads to 
\begin{align*}
\Big| \int_0^T \int_s^T \varrho(s,0)& H_k(r,0) dr ds - \frac{1}{2} \Big[ \int_0^T \varrho(s,0)  \Big]^2 \Big|= \Big|  \int_0^T \int_s^T \varrho(s,0) [ H_k(r,0) - \varrho(r,0)] dr ds \Big| \\
\leq& T^{\frac{3}{2}} \| \varrho \|_{\infty} \sqrt{  \int_0^T  [ H_k(r,0) - \varrho(r,0)]^2 dr} =  T^{\frac{3}{2}} \| \varrho \|_{\infty} \sqrt{  \int_0^T  [ f_k(r,0)]^2 dr}, 
\end{align*}
 where $f_k:=H_k-\varrho, \forall k \geq 1$.  Since $\varrho(r, \cdot) \in C^0(\mathbb{R})$, for almost every  $r \in [0,T]$, from Theorem 9.12 of \cite{brezis2010functional}, there exists a constant $C$ independent of $k$ such that 
\begin{align*}
|f_k(r,0)|  \leq \int_{0}^{1}  [|f_k(r,0)-f_k(r,u)|  + |f_k(r,u)| ] du  
\leq C \|f_k(r, \cdot) \|_{\mcb{H}^1(\mathbb{R})} + \int_{0}^{1} |f_k(r,u)| du \leq( C +1) \|f_k(r, \cdot) \|_{\mcb{H}^1(\mathbb{R})}.
\end{align*}
Integrating over time and since $(f_k)_{k \geq 1}$ converges to zero in $L^2 \big( 0,T; \mcb{H}^1 (\mathbb{R}) \big)$, the proof ends. 

\end{proof}
 
\subsection{Uniqueness of weak solutions}

We observe that weak solutions of \eqref{eqhyddifrob} deal with $\mcb S_{\textit{Rob}}$ as the  space of test functions. 
The uniqueness of the weak solutions of \eqref{eqhyddifrob}  is equivalent to the following result.
\begin{prop} \label{uniqeqhyddifrobneu}
Let $\varrho_1, \varrho_2$ be such that $\varrho_1 -a, \varrho_2 -a  \in L^2 \big( 0,T; \mcb{H}^1 (\mathbb{R}^{*}) \big)$, for some $a \in (0,1)$. If 
\begin{align*}
F_{\textrm{Rob}}(t, \varrho_1, G,  \mcb g,\kappa) = 0 = F_{\textrm{Rob}}(t, \varrho_2, G, \mcb g,\kappa), \forall t \in [0,T], \forall G \in \mcb S_{\textrm{Rob}},
\end{align*} 
then $\varrho_1 = \varrho_2$ almost everywhere in $[0,T] \times \mathbb{R}$.
\end{prop}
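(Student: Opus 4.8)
The plan is to run the energy method on the difference $\varrho:=\varrho_1-\varrho_2$. Since $F_{\textrm{Rob}}(t,\cdot,G,\cdot,\kappa)$ is linear jointly in its density and initial-condition arguments, subtracting the two hypotheses gives $F_{\textrm{Rob}}(t,\varrho,G,0,\kappa)=0$ for every $t\in[0,T]$ and every $G\in\mcb S_{\textrm{Rob}}$; moreover $\varrho\in L^2\big(0,T;\mcb{H}^1(\mathbb{R}^{*})\big)$ (the constants $a$ cancel) and $\varrho$ is bounded, because $\varrho_1,\varrho_2$ take values in $[0,1]$. The goal is to deduce $\int_0^T\int_{\mathbb{R}}\varrho^2\,du\,ds=0$.

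Reasoning on each half-line and combining the density of $C_c^{\infty}(\mathbb{R})$ in $\mcb{H}^1(\mathbb{R})$ with the density of time-polynomials, one checks that $\mcb S_{\textrm{Rob}}$ is dense in $L^2\big(0,T;\mcb{H}^1(\mathbb{R}^{*})\big)$, so I can pick $(H_k)\subset\mcb S_{\textrm{Rob}}$ with $H_k\to\varrho$ in that norm and set $G_k(t,u):=\int_t^T H_k(s,u)\,ds\in\mcb S_{\textrm{Rob}}$, which satisfies $\partial_t G_k=-H_k$ and $G_k(T,\cdot)=0$. Evaluating $F_{\textrm{Rob}}(T,\varrho,G_k,0,\kappa)=0$ kills the first two terms (by $G_k(T,\cdot)=0$ and by the zero initial datum). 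The key algebraic step is to integrate by parts in space the contribution $\int_0^T\int_{\mathbb{R}}\varrho\,\tfrac{\sigma^2}{2}\Delta G_k$, performed separately on $\mathbb{R}_-^{*}$ and $\mathbb{R}_+^{*}$ via the $\mcb{H}^1$ integration-by-parts formula; the boundary terms it produces at $0^{\pm}$ are exactly $\pm\tfrac{\sigma^2}{2}\varrho(s,0^{\pm})\partial_u G_k(s,0^{\pm})$, which cancel the $\partial_u G$ boundary terms already present in $F_{\textrm{Rob}}$. The identity then collapses to
$$0=\frac{\sigma^2}{2}\int_0^T\!\!\int_{\mathbb{R}}\partial_u\varrho\,\partial_u G_k\,du\,ds-\int_0^T\!\!\int_{\mathbb{R}}\varrho\,\partial_s G_k\,du\,ds+\frac{\kappa\sigma^2}{2}\int_0^T[\varrho(s,0^{+})-\varrho(s,0^{-})][G_k(s,0^{+})-G_k(s,0^{-})]\,ds.$$

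Next I would pass to the limit $k\to\infty$ term by term. Applying the computation of Lemma \ref{lemuniqdif} to the restrictions on $I=\mathbb{R}_-^{*}$ and $I=\mathbb{R}_+^{*}$ and adding, the first integral tends to $\tfrac{\sigma^2}{4}\int_{\mathbb{R}}[\partial_u R]^2\,du$ with $R(u):=\int_0^T\varrho(s,u)\,ds$, while the second (using $\partial_s G_k=-H_k$ and $H_k\to\varrho$ in $L^2$) tends to $\int_0^T\int_{\mathbb{R}}\varrho^2\,du\,ds$. For the Robin term I would use the one-sided trace convergence $H_k(\cdot,0^{\pm})\to\varrho(\cdot,0^{\pm})$ in $L^2(0,T)$ — obtained through the even-extension device of Proposition \ref{rhoposH1} and the $\mcb{H}^1(\mathbb{R})\hookrightarrow C^0$ bound, exactly as in the proof of Lemma \ref{lemuniqrob} — together with the elementary identity $\int_0^T b(s)\int_s^T b(r)\,dr\,ds=\tfrac12\big(\int_0^T b\big)^2$ for $b(s):=\varrho(s,0^{+})-\varrho(s,0^{-})$, which yields the nonnegative limit $\tfrac{\kappa\sigma^2}{4}\big(\int_0^T b\big)^2$.

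Collecting everything, the limiting identity reads
$$0=\frac{\sigma^2}{4}\int_{\mathbb{R}}[\partial_u R]^2\,du+\int_0^T\!\!\int_{\mathbb{R}}\varrho^2\,du\,ds+\frac{\kappa\sigma^2}{4}\Big(\int_0^T b(s)\,ds\Big)^2,$$
a sum of three nonnegative terms (this is where $\kappa\geq0$ is used). Hence each term vanishes; in particular $\int_0^T\int_{\mathbb{R}}\varrho^2=0$, so $\varrho_1=\varrho_2$ almost everywhere, and the identical argument with $\kappa=0$ covers the Neumann case. I expect the main obstacle to be the rigorous treatment of the origin: establishing the density of $\mcb S_{\textrm{Rob}}$ in $L^2\big(0,T;\mcb{H}^1(\mathbb{R}^{*})\big)$ and, above all, justifying the $L^2(0,T)$ convergence of the one-sided traces $H_k(\cdot,0^{\pm})$, since $\varrho$ genuinely jumps at $0$ and no global continuous representative is available — the trace argument must therefore be carried out on each half-line through Proposition \ref{rhoposH1}.
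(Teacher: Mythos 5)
Your proof is correct and takes essentially the same route as the paper's: subtract the two identities, test against $G_k(t,u)=\int_t^T H_k(s,u)\,ds$ built from approximants of the difference, integrate by parts on each half-line so the $\partial_u G$ boundary terms cancel, pass to the limit with Lemma \ref{lemuniqdif} and Lemma \ref{lemuniqrob}, and conclude from a sum of nonnegative terms. The only cosmetic difference is that the paper realizes your density/approximation step concretely by even-extending each half of the difference via Proposition \ref{rhoposH1} and approximating those extensions by elements of $\mcb S_{\textrm{Dif}}$ on the full line before gluing, so that Lemmas \ref{lemuniqdif} and \ref{lemuniqrob} apply verbatim rather than in the half-line form you invoke.
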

\begin{proof}
Denote $\varrho_3:= \varrho_1 - \varrho_2= [\varrho_1 - a] - [\varrho_2 - a]$. Then $\varrho_3 \in  L^2 \big( 0,T; \mcb{H}^1 (\mathbb{R}^{*}) \big)$ and $\varrho_3(s,u) \in \mcb{H}^1(\mathbb{R} ^{*})$, for almost every  $ s \in [0,T]$. Let $\tilde{\varrho}_{3,-}$ be the even extension of the continuous representative of $\varrho_3(s,\cdot)|_{\mathbb{R}_{-}^{*}}$ and define $\tilde{\varrho}_{3,+}$ in the same way, replacing $\mathbb{R}_{-}^{*}$ by $\mathbb{R}_{+}^{*}$. From Proposition \ref{rhoposH1}, it follows that $\varrho_{3,-}(s, \cdot)$ and $\varrho_{3,+}(s, \cdot)$ are in $ \mcb{H}^1(\mathbb{R})$. Then for every $t \in [0,T]$ and  for every $G \in \mcb S_{\textit{Rob}}$ we get that
\begin{align*}
0= & \int_{\mathbb{R}} \varrho_3(t,u) G(t,u) du -  \int_0^t \int_{\mathbb{R}_{-}} \tilde{\varrho}_{3,-}(s,u)  \partial_s  G(s,u) du ds -  \int_0^t \int_{\mathbb{R}_{+}} \tilde{\varrho}_{3,+}(s,u)  \partial_s  G(s,u) du ds  \\
+& \frac{\sigma^2}{2}  \int_0^t \Big[  \partial_u G(s,0^{-}) \tilde{ \varrho}_{3,-}(s,0)  - \int_{\mathbb{R}_{-}}  \tilde{\varrho}_{3,-}(s,u)  \Delta G(s,u) du  \Big] ds \\
+& \frac{\sigma^2}{2}  \int_0^t \Big[  - \partial_u G(s,0^{+}) \tilde{ \varrho}_{3+}(s,0)  - \int_{\mathbb{R}_{+}}  \tilde{\varrho}_{3,+}(s,u)  \Delta G(s,u) du  \Big] ds \\
+ &  \frac{\kappa \sigma^2}{2} \int_0^t  [    \tilde{ \varrho}_{3+}(s,0)  -  \tilde{ \varrho}_{3,-}(s,0) ]  [  G(s,0^{+})    - G(s,0^{-})  ] ds.
\end{align*}
 Applying the integration by parts formula for $\mcb{H}^1 (\mathbb{R}_{-}^{*})$ and $\mcb{H}^1 (\mathbb{R}_{+}^{*})$, we get, for every $t \in [0,T]$ and for every $G \in \mcb {S}_{\textit{Rob}}$, that
\begin{align*}
0= &  \int_{\mathbb{R}_{-}} \tilde{\varrho}_{3,-}(t,u) G(t,u) du - \int_0^t \int_{\mathbb{R}_{-}} \tilde{\varrho}_{3,-}(s,u)  \partial_s  G(s,u) du ds + \frac{\sigma^2}{2}  \int_0^t  \int_{\mathbb{R}_{-}} \partial_u  \tilde{\varrho}_{3,-}(s,u)  \partial_u G(s,u) du   ds \\
+ & \int_{\mathbb{R}_{+}} \tilde{\varrho}_{3,+}(t,u) G(t,u) du - \int_0^t \int_{\mathbb{R}_{+}} \tilde{\varrho}_{3,+}(s,u)  \partial_s  G(s,u) du ds + \frac{\sigma^2}{2}  \int_0^t  \int_{\mathbb{R}_{+}} \partial_u  \tilde{\varrho}_{3,+}(s,u)  \partial_u G(s,u) du   ds \\
+& \frac{\kappa \sigma^2}{2} \int_0^t  [    \tilde{ \varrho}_{3+}(s,0)  -  \tilde{ \varrho}_{3,-}(s,0) ]  [  G(s,0^{+})    - G(s,0^{-})  ] ds,
\end{align*}
 Since $\tilde{\varrho}_{3,-}, \tilde{\varrho}_{3,+}  \in  L^2 \big( 0,T; \mcb{H}^1 (\mathbb{R}) \big)$, there exist $(H_{k,-})_{k \geq 1}, (H_{k,+})_{k \geq 1}$ in $\mcb {S}_{\textit{Dif}}$ such that $(H_{k,-})_{k \geq 1}$ (resp. $(H_{k,+})_{k \geq 1}$)  converges to $\tilde{\varrho}_{3,-}$ (resp.  $\tilde{\varrho}_{3,+}$) with respect to the norm of $L^2 \left( 0,T; \mcb{H}^1 (\mathbb{R})\right)$. Define $G_{k,-} (t,u) :=  \int_t^T H_{k,-} (s,u) ds, \forall (t,u) \in [0,T] \times \mathbb{R}, \forall k \geq 1$ and $G_{k,+} (t,u) :=  \int_t^T H_{k,+} (s,u) ds, \forall (t,u) \in [0,T] \times \mathbb{R}, \forall k \geq 1$. Moreover, define $G_{k} \in \mcb {S}_{\textit{Rob}}$ by
\begin{align*}
G_{k} (t,u) =\mathbbm{1}_{u\in(-\infty,0)}G_{k,-} (t,u)+\mathbbm{1}_{u\in [0, \infty)} G_{k,+} (t,u), \forall (t,u) \in [0,T] \times \mathbb{R}, \forall k \geq 1.
\end{align*}
In particular, $G_{k}  (T,u) = 0, \forall u \in \mathbb{R}, \forall k \geq 1$. Taking $t=T$  and $G=G_{k}$, we get
\begin{align}
0= &   - \int_0^T \int_{\mathbb{R}_{-}} \tilde{\varrho}_{3,-}(s,u)  \partial_s  G_{k,-}(s,u) du ds + \frac{\sigma^2}{2}  \int_0^T  \int_{\mathbb{R}_{-}} \partial_u  \tilde{\varrho}_{3,-}(s,u)  \partial_u G_{k,-}(s,u) du   ds  \nonumber \\
- & \int_0^T \int_{\mathbb{R}_{+}} \tilde{\varrho}_{3,+}(s,u)  \partial_s  G_{k,+}(s,u) du ds + \frac{\sigma^2}{2}  \int_0^T  \int_{\mathbb{R}_{+}} \partial_u  \tilde{\varrho}_{3,+}(s,u)  \partial_u G_{k,+}(s,u) du   ds \nonumber \\
+& \frac{\kappa \sigma^2}{2} \int_0^T  \int_s^T  [    \tilde{ \varrho}_{3+}(s,0)  -  \tilde{ \varrho}_{3,-}(s,0) ]  [  H_{k,+}(r,0)    - H_{k,-}(r,0)  ] dr ds, \forall k \geq 1. \label{equniqbarfor}
\end{align}
Since $G_{k,-}$, $G_{k,+}$ and $H_{k,+}-H_{k,-}$ are in $\mcb {S}_{\textit{Dif}}$, we can  use Lemma \ref{lemuniqdif} and Lemma \ref{lemuniqrob}. Taking the limit in \eqref{equniqbarfor} when $k \rightarrow \infty$, we get
\begin{align*}
& \int_0^T \int_{\mathbb{R}_{-}} [ \tilde{\varrho}_{3,-}(s,u) ]^2 du ds + \int_0^T \int_{\mathbb{R}_{+}} [ \tilde{\varrho}_{3,+}(s,u) ]^2 du ds 
+  \frac{\sigma^2}{4} \int_{\mathbb{R}_{-}} \Big[ \frac{\partial}{\partial u} \Big(   \int_0^T  \tilde{\varrho}_{3,-}(s,u) ds \Big) \Big]^2 du \\&+  \frac{\sigma^2}{4} \int_{\mathbb{R}_{+}} \Big[ \frac{\partial}{\partial u} \Big(   \int_0^T  \tilde{\varrho}_{3,+}(s,u) ds \Big) \Big]^2 du 
+ \frac{\kappa \sigma^2}{4}  \Big( \int_0^T  [    \tilde{ \varrho}_{3+}(s,0)  -  \tilde{ \varrho}_{3,-}(s,0) ] ds \Big)^2=0,
\end{align*}
which implies that $\tilde{\varrho}_{3,-}, \varrho_{3,-}, \varrho_3$ are equal to zero almost everywhere on $[0,T] \times \mathbb{R}_{-}$ and $\tilde{\varrho}_{3,+}, \varrho_{3,+},  \varrho_3$ are equal to zero almost everywhere on $[0,T] \times \mathbb{R}_{+}$. Then $\varrho_1 = \varrho_2$ almost everywhere on $[0,T] \times \mathbb{R}$. 
\end{proof}
The proof for the uniqueness of weak solutions of \eqref{eqhyddifreal} is analogous to the proof given above, so that we omit details. 

\quad

\thanks{ {\bf{Acknowledgements: }}
P.C. thanks FCT/Portugal for support through the project Lisbon Mathematics PhD (LisMath). P.C. and P.G. thank  FCT/Portugal for financial support
through CAMGSD, IST-ID, projects UIDB/04459/2020 and UIDP/04459/2020. B.J.O thanks  Universidad Nacional de Costa Rica  for sponsoring the participation in  this article. This project has received funding from the European Research Council (ERC) under  the European Union's Horizon 2020 research and innovative programme (grant agreement   n. 715734).}

\bibliographystyle{plain}
\bibliography{bibliografia}

\Addresses

\end{document}